\numberwithin{equation}{section}
\numberwithin{figure}{section}
\theoremstyle{plain}
\newtheorem{thm}{\protect\theoremname}[section]
  \theoremstyle{definition}
  \newtheorem{defn}[thm]{\protect\definitionname}
  \theoremstyle{plain}
  \newtheorem{lem}[thm]{\protect\lemmaname}
  \theoremstyle{remark}
  \newtheorem{rem}[thm]{\protect\remarkname}
\DeclareMathOperator*{\esssup}{ess\,sup}
\DeclareMathOperator*{\esssinf}{ess\,inf}
\DeclareMathOperator*{\esssliminf}{ess\,lim\,inf}
\def\Xint#1{\mathchoice
{\XXint\displaystyle\textstyle{#1}}%
{\XXint\textstyle\scriptstyle{#1}}%
{\XXint\scriptstyle\scriptscriptstyle{#1}}%
{\XXint\scriptscriptstyle\scriptscriptstyle{#1}}%
\!\int}
\def\XXint#1#2#3{{\setbox0=\hbox{$#1{#2#3}{\int}$ }
\vcenter{\hbox{$#2#3$ }}\kern-.6\wd0}}
\def\dashint{\Xint-}
\newcommand{\Addresses}{{
  \bigskip
  \footnotesize
  \textsc{Jarkko Siltakoski,  Department of Mathematics and Statistics, P.O.Box 35, FIN-40014, University of Jyväskylä, Finland}\par\nopagebreak
  \textit{E-mail address}: \href{mailto:jarkko.j.m.siltakoski@student.jyu.fi}{jarkko.j.m.siltakoski@student.jyu.fi}
}}
  \providecommand{\definitionname}{Definition}
  \providecommand{\lemmaname}{Lemma}
  \providecommand{\remarkname}{Remark}
\providecommand{\theoremname}{Theorem}
\begin{document}
\global\long\def\d{\,d}
\global\long\def\tr{\mathrm{tr}}
\global\long\def\supp{\operatorname{spt}}
\global\long\def\div{\operatorname{div}}
\global\long\def\osc{\operatorname{osc}}
\global\long\def\essup{\esssup}
\global\long\def\aint{\dashint}
\global\long\def\essinf{\esssinf}
\global\long\def\essliminf{\esssliminf}

\excludeversion{old}

\excludeversion{note}

\excludeversion{note2}


\title[Equivalence of viscosity and weak solutions]{Equivalence of viscosity and weak solutions for a $p$-parabolic
equation}

\author{Jarkko Siltakoski}
\begin{abstract}
We study the relationship of viscosity and weak solutions to the equation
\[
\smash{\partial_{t}u-\Delta_{p}u=f(Du)}
\]
where $p>1$ and $f\in C(\mathbb{R}^{N})$ satisfies suitable assumptions.
Our main result is that bounded viscosity supersolutions coincide
with bounded lower semicontinuous weak supersolutions. Moreover, we
prove the lower semicontinuity of weak supersolutions when $p\geq2$.
\end{abstract}

\maketitle

\section{Introduction}

A classical solution to a partial differential equation is a smooth
function that satisfies the equation pointwise. Since many equations
that appear in applications admit no such solutions, a more general
class of solutions is needed. One such class is the extensively studied
distributional weak solutions defined by integration by parts. Another
is the celebrated viscosity solutions based on generalized pointwise
derivatives. When both classes of solutions can be meaningfully defined,
it is naturally crucial that they coincide. This has been profusely
studied starting from \cite{Ishii95}. In \cite{equivalence_plaplace}
the equivalence of solutions was proved for the parabolic $p$-Laplacian.
The objective of the present work is to prove this equivalence in
a different way while also allowing the equation to depend on a first-order
term. To the best of our knowledge, the proof is new even in the homogeneous
case, at least when $\smash{1<p<2}$.

More precisely, we study the parabolic equation
\begin{equation}
\partial_{t}u-\Delta_{p}u=f(Du)\label{eq:p-para f}
\end{equation}
where $\smash{1<p<\infty}$ and $\smash{f\in C(\mathbb{R}^{N})}$
satisfies a certain growth condition, for details see Section 2. We
show that bounded viscosity supersolutions to (\ref{eq:p-para f})
coincide with bounded lower semicontinuous weak supersolutions. Moreover,
we prove the lower semicontinuity of weak supersolutions in the range
$p\geq2$ under slightly stronger \mbox{assumptions on $f$}.

To show that viscosity supersolutions are weak supersolutions, we
apply the technique introduced by Julin and Juutinen \cite{newequivalence}.
In contrast to \cite{equivalence_plaplace}, we do not employ the
uniqueness machinery of viscosity solutions. Instead, our strategy
is to approximate a viscosity supersolution $u$ by its inf-convolution
$\smash{u_{\varepsilon}}$. It is straightforward to show that $\smash{u_{\varepsilon}}$
is still a viscosity supersolution in a smaller domain. This and the
pointwise properties of the inf-convolution imply that $\smash{u_{\varepsilon}}$
is also a weak supersolution in the smaller domain. Furthermore, it
follows from Caccioppoli's estimates that $\smash{u_{\varepsilon}}$
converges to $u$ in a suitable Sobolev space. It then remains to
pass to the limit to see that $u$ is a weak supersolution.

To show that weak supersolutions are viscosity supersolutions, we
apply the argument from \cite{equivalence_plaplace} that is based
on the comparison principle of weak solutions. However, we could not
find a reference for comparison principle for the equation (\ref{eq:p-para f}).
Therefore we give a detailed proof of such a result.

To prove the lower semicontinuity of weak supersolutions, we adapt
the strategy of \cite{kuusi09}. First we prove estimates for the
essential supremum of a subsolution using the Moser's iteration technique.
Then we use those estimates to deduce that a supersolution is lower
semicontinuous at its Lebesgue points.

The equivalence of viscosity and weak solutions for the $p$-Laplace
equation and its parabolic version was first proven in \cite{equivalence_plaplace}.
A different proof in the elliptic case was found in \cite{newequivalence}.
Recently the equivalence of solutions has been studied for various
equations. These include the normalized $p$-Poisson equation \cite{OptimalC1},
a non-homogeneous $p$-Laplace equation \cite{chilepaper} and the
normalized $\smash{p(x)}$-Laplace equation \cite{siltakoski18}.
Moreover, in \cite{ParvVaz} the equivalence is shown for the radial
solutions of a parabolic equation. We also mention that an unpublished
version of \cite{lindqvist12reg} applies \cite{newequivalence} to
sketch the equivalence of solutions to (\ref{eq:p-para f}) in the
homogeneous case when $p\geq2$.

Comparison principles for quasilinear parabolic equations have been
studied by several authors. In \cite{junning90} comparison is proven
for $\smash{\partial_{t}u-\Delta_{p}u+f(u,x,t)=0}$ when $p>2$ and
$\smash{f}$ is a continuous function such that $\smash{\left|f(u,x,t)\right|\leq g(u)}$
for some $\smash{g\in C^{1}}$. The homogeneous case for the $p$-parabolic
equation is considered also in \cite{kilpel=0000E4inenLindqvist96}
and the general equation $\smash{\partial_{t}u-\div\mathcal{A}(x,t,Du)=0}$
in \cite{korteKuusiParv10}. Equations with gradient terms are studied
for example in \cite{Attouchi12}, where comparison principle is shown
for the equation $\smash{\partial_{t}u-\Delta_{p}u-\left|Du\right|^{\beta}=0}$
when $\smash{p>2}$ and $\smash{\beta>p-1}$. In the recent papers
\cite{bobkovTakac14,bobkovTakac18}, both positive results and counter
examples are provided for the comparison, strong comparison and maximum
principles for the equation $\smash{\partial_{t}u-\Delta_{p}u-\lambda\left|u\right|^{p-2}u-f(x,t)=0}$.
Furthermore, according to \cite{benediktGirgKotrlaTakac16}, the equation
$\smash{\partial_{t}u-\Delta_{p}u=q(x)\left|u\right|^{\alpha}}$ can
admit multiple solutions with zero boundary values when $\smash{0<\alpha<1}$.

The paper is organized as follows. Section 2 contains the precise
definitions of weak and viscosity solutions. In Section 3 we show
that weak supersolutions are viscosity supersolutions, and the converse
is shown in Section 4. Finally, the lower semicontinuity of weak supersolutions
is considered in Section 5.

\section{Preliminaries }

The symbols $\Xi$ and $\Omega$ are reserved for bounded domains
in $\smash{\mathbb{R}^{N}\times\mathbb{R}}$ and $\smash{\mathbb{R}^{N}}$,
respectively. For $\smash{t_{1}<t_{2}}$, we define the cylinder $\smash{\Omega_{t_{1},t_{2}}:=\Omega\times(t_{1},t_{2})}$
and its \textit{parabolic boundary} $\smash{\partial_{p}\Omega_{t_{1},t_{2}}:=(\overline{\Omega}\times\left\{ t_{1}\right\} )\cup\left(\partial\Omega\times(t_{1},t_{2}]\right)}$.
Moreover, for $T>0$ we set $\smash{\Omega_{T}:=\Omega_{0,T}}$.

The \textit{Sobolev space} $\smash{W^{1,p}(\Omega)}$ contains the
functions $\smash{u\in L^{p}(\Omega)}$ for which the distributional
gradient $\smash{Du}$ exists and belongs in $\smash{L^{p}(\Omega)}$.
It is equipped with the norm 
\[
\left\Vert u\right\Vert _{W^{1,p}(\Omega)}:=\left\Vert u\right\Vert _{L^{p}(\Omega)}+\left\Vert Du\right\Vert _{L^{p}(\Omega)}.
\]
A Lebesgue measurable function $\smash{u:\Omega_{t_{1},t_{2}}\rightarrow\mathbb{R}}$
belongs to the \textit{parabolic Sobolev space} $\smash{L^{p}(t_{1},t_{2};W^{1,p}(\Omega))}$
if $\smash{u(\cdot,t)\in W^{1,p}(\Omega)}$ for almost every $\smash{t\in(t_{1},t_{2})}$
and the norm 
\[
\left(\int_{\Omega_{t_{1},t_{2}}}\left|u\right|^{p}+\left|Du\right|^{p}\d z\right)^{\frac{1}{p}}
\]
is finite. By $dz$ we mean integration with respect to space and
time variables, i.e. $dz=dx\d t$. Integral average is denoted by
\[
\aint_{\Omega_{T}}u\d z:=\frac{1}{\left|\Omega_{T}\right|}\int_{\Omega_{T}}u\d z.
\]

\subsection*{Growth condition}

Unless otherwise stated, the function $f\in C(\mathbb{R}^{N})$ is
assumed to satisfy the growth condition
\begin{equation}
\left|f(\xi)\right|\leq C_{f}(1+\left|\xi\right|^{\beta})\quad\text{for all }\xi\in\mathbb{R}^{N},\tag{{G1}}\label{eq:gcnd}
\end{equation}
where $C_{f}>0$ and $1\leq\beta<p$. 

\vspace{0bp}

\begin{defn}[Weak solution]
A function $u:\Xi\rightarrow\mathbb{R}$ is a \textit{weak supersolution}
to (\ref{eq:p-para f}) in $\Xi$ if $u\in L^{p}(t_{1},t_{2};W^{1,p}(\Omega))$
whenever $\Omega_{t_{1},t_{2}}\Subset\Xi$, and
\begin{equation}
\int_{\Xi}-u\partial_{t}\varphi+\left|Du\right|^{p-2}Du\cdot D\varphi-\varphi f(Du)\d z\geq0\label{eq:p-para-f weak eq}
\end{equation}
for all non-negative \textit{test functions} $\varphi\in C_{0}^{\infty}(\Omega_{t_{1},t_{2}})$.
For \textit{weak subsolutions} the inequality is reversed and a function
is a \textit{weak solution} if it is both super- and subsolution.
\end{defn}
To define viscosity solutions to (\ref{eq:p-para f}), we set for
all $\varphi\in C^{2}$ with $D\varphi\not=0$
\[
\Delta_{p}\varphi:=\left|D\varphi\right|^{p-2}\left(\Delta\varphi+\frac{p-2}{\left|D\varphi\right|^{2}}\left\langle D^{2}\varphi D\varphi,D\varphi\right\rangle \right).
\]

\begin{defn}[Viscosity solution]
A lower semicontinuous and bounded function $u:\Xi\rightarrow\mathbb{R}$
is a \textit{viscosity supersolution} to (\ref{eq:p-para f}) in $\Xi$
if whenever $\varphi\in C^{2}(\Xi)$ and $(x_{0},t_{0})\in\Xi$ are
such that
\[
\begin{cases}
\varphi(x_{0},t_{0})=u(x_{0},t_{0}),\\
\varphi(x,t)<u(x,t) & \text{when }(x,t)\not=(x_{0},t_{0}),\\
D\varphi(x,t)\not=0 & \text{when }x\not=x_{0},
\end{cases}
\]
then
\[
\limsup_{\substack{\substack{(x,t)\rightarrow(x_{0},t_{0})\\
x\not=x_{0}
}
}
}\left(\partial_{t}\varphi(x,t)-\Delta_{p}\varphi(x,t)-f(D\varphi(x,t))\right)\geq0.
\]
An upper semicontinuous and bounded function $u:\Xi\rightarrow\mathbb{R}$
is a \textit{viscosity subsolution} to (\ref{eq:p-para f}) in $\Xi$
if whenever $\varphi\in C^{2}(\Xi)$ and $(x_{0},t_{0})\in\Xi$ are
such that
\[
\begin{cases}
\varphi(x_{0},t_{0})=u(x_{0},t_{0}),\\
\varphi(x,t)>u(x,t) & \text{when }(x,t)\not=(x_{0},t_{0}),\\
D\varphi(x,t)\not=0 & \text{when }x\not=x_{0,}
\end{cases}
\]
then
\[
\liminf_{\substack{\substack{(x,t)\rightarrow(x_{0},t_{0})\\
x\not=x_{0}
}
}
}\left(\partial_{t}\varphi(x,t)-\Delta_{p}\varphi(x,t)-f(D\varphi(x,t))\right)\leq0.
\]
A function that is both viscosity sub- and supersolution is a \textit{viscosity
solution}.
\end{defn}
If a function $\varphi$ is like in the definition of viscosity supersolution,
we say that $\varphi$ \textit{touches $u$ from below at} $\smash{(x_{0},t_{0})}$.
The limit supremum in the definition is needed because the operator
$\smash{\Delta_{p}}$ is singular when $1<p<2$. When $p\geq2$, the
operator is degenerate and the limit supremum disappears.

\section{Weak solutions are viscosity solutions}

We show that bounded, lower semicontinuous weak supersolutions to
(\ref{eq:p-para f}) are viscosity supersolutions when $\smash{1<p<\infty}$
and $\smash{f\in C(\mathbb{R}^{N})}$ satisfies the growth condition
(\ref{eq:gcnd}). One way to prove this kind of results is by applying
the comparison principle \cite{equivalence_plaplace}. However, we
could not find the comparison principle for the equation (\ref{eq:p-para f})
in the literature and therefore we prove it first. To this end, we
first prove comparison Lemmas \ref{lem:comparison lemma 1<p<2} and
\ref{lem:comparison lemma p>2} for locally Lipschitz continuous $\smash{f}$.
The local Lipschitz continuity allows us to absorb the first-order
terms into the terms that appear due to the $p$-Laplacian, see Step
2 in proof of Lemma \ref{lem:comparison lemma 1<p<2}. To deal with
general $\smash{f}$, we take a locally Lipschitz continuous approximant
$\smash{f_{\delta}}$ such that $\smash{\left\Vert f-f_{\delta}\right\Vert _{L^{\infty}(\mathbb{R}^{N})}<\delta/4T}$.
Then for sub- and supersolutions $u$ and $v$, we consider the functions
\[
u_{\delta}:=u-\frac{\delta}{T-t/2}\quad\text{and}\quad v_{\delta}:=v+\frac{\delta}{T-t/2}.
\]
These functions will be sub- and supersolutions to (\ref{eq:p-para f})
where $\smash{f}$ is replaced by $\smash{f_{\delta}}$. Since $\smash{f_{\delta}}$
is locally Lipschitz continuous, it follows from the Lemmas \ref{lem:comparison lemma 1<p<2}
and \ref{lem:comparison lemma p>2} that $\smash{u_{\delta}\leq v_{\delta}}$.
Letting $\smash{\delta\rightarrow0}$ then yields that $\smash{u\leq v}$.

For similar comparison results, see \cite[Proposition 2.1]{Attouchi12}
and \cite{junning90}. See also Chapters 3.5 and 3.6 in \cite{pucciSerrin07}
for the elliptic case. A minor difference in our results is that instead
of requiring that both the subsolution and the supersolution have
uniformly bounded gradients, we only require this for the subsolution.

To prove the comparison principle, we need to use a test function
that depends on the supersolution itself. However, supersolutions
do not necessarily have a time derivative. One way to deal with this
is to use mollifications in the time direction. For a compactly supported
$\smash{\varphi\in L^{p}(\Omega_{T})}$ we define its \textit{time-mollification}
by
\[
\varphi^{\epsilon}(x,t)=\int_{\mathbb{R}}\phi(x,t-s)\rho_{\epsilon}(s)\d s,
\]
where $\smash{\rho_{\epsilon}}$ is a standard mollifier whose support
is contained in $(-\epsilon,\epsilon)$. Then $\smash{\varphi^{\epsilon}}$
has time derivative and $\smash{\varphi^{\epsilon}\rightarrow\varphi}$
in $\smash{L^{p}(\Omega_{T})}$. Furthermore, the time-mollification
of a supersolution satisfies a reguralized equation in the sense of
the following lemma.
\begin{lem}
\label{lem:time conv lemma} Let $v\in L^{\infty}(\Omega_{T})$ be
a weak supersolution (subsolution) to (\ref{eq:p-para f}) in $\Omega_{T}$.
Then we have
\begin{equation}
\int_{\Omega_{T}}-v^{\epsilon}\partial_{t}\varphi+\left(\left|Dv\right|^{p-2}Dv\right)^{\epsilon}\cdot D\varphi-\varphi\left(f(Dv)\right)^{\epsilon}\d z\geq(\leq)\thinspace0\label{eq:p-para f reg}
\end{equation}
for all $\smash{\varphi\in W^{1,p}(\Omega_{T})\cap L^{\infty}(\Omega_{T})}$
with compact support in $\smash{\Omega_{T}}$. Moreover, if the stronger
growth condition (\ref{eq:gcnd2}) holds, then the assumption $\smash{\varphi\in L^{\infty}(\Omega_{T})}$
is not needed.
\end{lem}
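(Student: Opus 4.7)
The plan is to take any admissible $\varphi$, apply the original weak inequality for $v$ with the time-mollification $\varphi^\epsilon$ substituted for $\varphi$, and then use the evenness of the standard mollifier to slide the convolution from $\varphi$ onto $v$, $|Dv|^{p-2}Dv$, and $f(Dv)$. The key identity is the convolution-symmetry $\int g(z)\,\psi^\epsilon(z)\,dz = \int g^\epsilon(z)\,\psi(z)\,dz$, valid by Fubini whenever $g\in L^1_{\mathrm{loc}}$ and $\psi$ is compactly supported in $\Omega_T$, together with the commutation of derivatives and mollification: $\partial_t\varphi^\epsilon=(\partial_t\varphi)^\epsilon$ and $D\varphi^\epsilon=(D\varphi)^\epsilon$.

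First I would upgrade the class of admissible test functions in the definition of weak supersolution from non-negative $\psi\in C_0^\infty(\Omega_T)$ to non-negative compactly supported $\psi\in W^{1,p}(\Omega_T)\cap L^\infty(\Omega_T)$. This is achieved by standard smooth approximation, since mollification preserves non-negativity and the $L^\infty$ bound while approximating $\psi$ in $W^{1,p}$. Then, for a given non-negative $\varphi\in W^{1,p}(\Omega_T)\cap L^\infty(\Omega_T)$ with compact support in $\Omega_T$ and all sufficiently small $\epsilon>0$, the function $\varphi^\epsilon$ is itself non-negative, compactly supported in $\Omega_T$, and lies in $W^{1,p}\cap L^\infty$, so it is legal in the upgraded inequality. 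Inserting it there and applying the convolution-shift and derivative-commutation identities term by term yields exactly (\ref{eq:p-para f reg}). The subsolution case is identical with the inequality reversed throughout.

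The main obstacle is the upgrade step, specifically controlling the term $\int\psi\,f(Dv)\,dz$ along the approximating sequence. Under the basic growth condition (\ref{eq:gcnd}) we have only $f(Dv)\in L^{p/\beta}_{\mathrm{loc}}$ with $1<p/\beta$, and the H\"older dual of this space is not necessarily reached from $\psi\in W^{1,p}$ in every dimension; this is precisely the role of the hypothesis $\psi\in L^\infty$, since it lets the approximating sequence be dominated by $\|\psi\|_{L^\infty}\,C_f(1+|Dv|^\beta)\in L^1_{\mathrm{loc}}$ and dominated convergence to be applied. Under the stronger growth condition (\ref{eq:gcnd2}), which improves the integrability of $f(Dv)$, the H\"older pairing with $\psi\in W^{1,p}$ is automatic and the $L^\infty$ hypothesis becomes redundant throughout the argument.
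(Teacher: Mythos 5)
Your proposal is correct and follows essentially the same route the paper sketches: test the weak formulation with the time-mollified test function, slide the mollification onto $v$, $|Dv|^{p-2}Dv$ and $f(Dv)$ via Fubini and the symmetry of the mollifier, and handle general $\varphi\in W^{1,p}(\Omega_T)\cap L^{\infty}(\Omega_T)$ by smooth approximation, with the $L^{\infty}$ bound (or the stronger growth condition (\ref{eq:gcnd2})) giving the needed domination in the $f(Dv)$ term. The only difference is the harmless reordering of the two steps (you enlarge the admissible test class first and then mollify, while the paper mollifies smooth test functions first and then approximates), which changes nothing of substance.
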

If $\smash{\varphi}$ is smooth, then testing the weak formulation
of (\ref{eq:p-para f}) with $\smash{\varphi^{\epsilon}}$, changing
variables and using Fubini's theorem yields (\ref{eq:p-para f reg}).
The general case follows by approximating $\smash{\varphi}$ in $\smash{W^{1,p}(\Omega_{T})}$
with the standard mollification. We omit the details.
\begin{lem}
\label{lem:comparison lemma 1<p<2}Let $\smash{1<p<2}$ and let $\smash{f}$
be locally Lipschitz. Let $\smash{u}$, $\smash{v\in L^{\infty}(\Omega_{T})}$
respectively be weak sub- and supersolutions to (\ref{eq:p-para f})
in $\smash{\Omega_{T}}$. Assume that for all $\smash{(x_{0},t_{0})\in\partial_{p}\Omega_{T}}$
\[
\limsup_{(x,t)\rightarrow(x_{0},t_{0})}u(x,t)\leq\liminf_{(x,t)\rightarrow(x_{0},t_{0})}v(x,t).
\]
Suppose also that $Du\in L^{\infty}(\Omega_{T})$. Then $u\leq v$
a.e.\ in $\Omega_{T}$.
\end{lem}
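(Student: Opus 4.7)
The plan is to test the subtracted sub/super-solution inequalities with $\varphi_\sigma := (u-v-\sigma)_+$ for $\sigma > 0$, derive an energy estimate on time slices, conclude via Gronwall, and finally send $\sigma \to 0^+$. By the boundary hypothesis together with the semicontinuity of $u$ and $v$, the set $\{u > v + \sigma\}$ is compactly contained in $\Omega_T$ away from $\partial_p \Omega_T$ (and away from the initial slice), so $\varphi_\sigma$ extended by zero is admissible. Since $v$ has no classical time derivative, I would invoke Lemma \ref{lem:time conv lemma} and test the mollified sub- and supersolution inequalities with $\eta(t)(u^\epsilon - v^\epsilon - \sigma)_+$, where $\eta$ is a smooth approximation of $\chi_{(0,\tau)}$ for $\tau \in (0,T)$.

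Subtracting the mollified inequalities and passing $\epsilon \to 0$ (using the $L^p$-convergence of the mollifications and the growth bound on $f$ for the source term), the time term yields $\tfrac12 \int_\Omega \varphi_\sigma^2(\cdot, \tau) \d x$ via the standard manipulation, and by the vector inequality
\[
(|a|^{p-2}a - |b|^{p-2}b) \cdot (a-b) \geq c(p)\, |a-b|^2 (|a| + |b|)^{p-2}, \quad 1 < p < 2,
\]
the divergence term dominates $c(p) \int_0^\tau \int_{\{u > v + \sigma\}} |D(u-v)|^2 (|Du| + |Dv|)^{p-2} \d z$.

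The main obstacle lies in absorbing the source $\int \varphi_\sigma (f(Du) - f(Dv)) \d z$ into this degenerate quadratic form, since for $1 < p < 2$ the monotonicity is weak for large gradients. I would split the integration at $\{|Dv| \leq 2M\} \cup \{|Dv| > 2M\}$, where $M := \|Du\|_{L^\infty(\Omega_T)}$. On the first set, the local Lipschitz continuity of $f$ gives $|f(Du) - f(Dv)| \leq L_M |Du - Dv|$, and a weighted Young's inequality with weight $(|Du| + |Dv|)^{(p-2)/2}$ absorbs this into the divergence term up to a multiple of $\int \varphi_\sigma^2 \d z$, using that $(|Du| + |Dv|)^{2-p}$ is bounded there. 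On the second set, the reverse triangle inequality yields $|Du - Dv| \geq |Dv|/2$ and $|Du| + |Dv| \leq 3|Dv|/2$, so the divergence integrand dominates $c |Dv|^p$; meanwhile the growth condition (\ref{eq:gcnd}) gives $|f(Du) - f(Dv)| \leq C(1 + |Dv|^\beta)$ with $\beta < p$, and Young's inequality with exponent $p/\beta > 1$ absorbs $\varphi_\sigma |Dv|^\beta$ into this coercive term up to a multiple of $\int \varphi_\sigma^{p/(p-\beta)} \d z$. A key sanity check is that $p/(p-\beta) \geq 2$ (since $\beta \geq 1 > p/2$ for $1 < p < 2$), so the boundedness of $\varphi_\sigma$ reduces this to a multiple of $\int \varphi_\sigma^2 \d z$.

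Collecting these estimates yields
\[
\int_\Omega \varphi_\sigma^2(\cdot, \tau) \d x \leq C \int_0^\tau \int_\Omega \varphi_\sigma^2 \d x \d t \qquad \text{for a.e.\ } \tau \in (0, T),
\]
so Gronwall's inequality forces $\varphi_\sigma \equiv 0$ in $\Omega_T$. Letting $\sigma \to 0^+$ then gives $u \leq v$ a.e.\ in $\Omega_T$.
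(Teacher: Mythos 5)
Your proposal follows essentially the same route as the paper: the same time-mollified test function $\eta\,((u-v-\sigma)^{\epsilon})_{+}$, the same passage to an energy inequality on time slices, the same splitting of the positivity set into a region where $|Dv|$ is comparable to $\|Du\|_{L^{\infty}}$ (where local Lipschitz continuity of $f$ is used) and a far region (where the growth condition and the coercivity of the $p$-Laplacian difference absorb the source), followed by Young, Gr\"onwall and $\sigma\to0$; using the vector inequality with weight $(|a|+|b|)^{p-2}$ instead of $(1+|a|^{2}+|b|^{2})^{(p-2)/2}$ is only a cosmetic difference. There is, however, one concrete loose end: on the far region $\{|Dv|>2M\}$ with $M:=\|Du\|_{L^{\infty}(\Omega_{T})}$ you bound $|f(Du)-f(Dv)|\leq C(1+|Dv|^{\beta})$ but only absorb the $|Dv|^{\beta}$ part; the constant part leaves a term $C\int\varphi_{\sigma}\,dz$, and since $M$ may be arbitrarily small (even zero), you cannot dominate $1$ by $|Dv|^{\beta}$ there. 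That term cannot be absorbed into $\int\varphi_{\sigma}^{2}\,dz$, and the resulting inequality $y(\tau)\leq C\int_{0}^{\tau}y+C\int_{0}^{\tau}y^{1/2}$ is of Osgood type with a non-Lipschitz nonlinearity, so Gr\"onwall no longer forces $y\equiv0$. The fix is exactly the paper's choice of threshold: split at some $M\geq\max(2\|Du\|_{L^{\infty}},1)$, so that on the far region $1\leq|Dv|^{\beta}$ and the whole source is controlled by $|Dv|^{\beta}\varphi_{\sigma}$; with that one-line change your argument is complete.
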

\begin{proof}
\textbf{(Step 1)}\ Let $l>0$ and set $\smash{w:=(u-v-l)_{+}}$.
Let also $\smash{s\in(0,T)}$. We want to use $\smash{w\cdot\chi_{[0,s]}}$
as a test function, but since it is not smooth, we must perform mollifications.
Let $\smash{h>0}$ and define
\[
\varphi:=\eta\left(\left(u-v-l\right){}^{\epsilon}\right)_{+},
\]
 where
\[
\eta(t)=\begin{cases}
1, & t\in(0,s-h],\\
(-t+s+h)/2h, & t\in(s-h,s+h),\\
0, & t\in[s+h,T).
\end{cases}
\]
The function $\varphi$ is compactly supported and belongs in $W^{1,p}(\Omega_{T})$.
Therefore by Lemma \ref{lem:time conv lemma} we have
\begin{align}
\int_{\Omega_{T}} & -(u-v)^{\epsilon}\partial_{t}\varphi\d z\nonumber \\
\leq & \int_{\Omega_{T}}\left(\left(\left|Dv\right|^{p-2}Dv\right)^{\epsilon}-\left(\left|Du\right|^{p-2}Du\right)^{\epsilon}\right)\cdot D\varphi+\varphi\left(f(Du)^{\epsilon}-f(Dv)^{\epsilon}\right)\d z.\label{eq:comparison pre main est 1<p<2}
\end{align}
We use the linearity of convolution and integration by parts to eliminate
the time derivative. We obtain
\begin{align*}
\int_{\Omega_{T}}- & (u-v)^{\epsilon}\partial_{t}\varphi\d z\\
= & -\int_{\Omega_{T}}(u-v)^{\epsilon}\left((u-v-l)^{\epsilon}\right)_{+}\partial_{t}\eta+\eta(u-v)^{\epsilon}\partial_{t}\left((u-v-l)^{\epsilon}\right)_{+}\d z\\
= & -\int_{\Omega_{T}}(u-v-l)^{\epsilon}((u-v-l)^{\epsilon})_{+}\partial_{t}\eta+l\left((u-v-l)^{\epsilon}\right)_{+}\partial_{t}\eta\\
 & \ \ \ \ \ \ \ \ \ +\eta(u-v-l)^{\epsilon}\partial_{t}\left((u-v-l)^{\epsilon}\right)_{+}+l\eta\partial_{t}\left((u-v-l)^{\epsilon}\right)_{+}\d z\\
= & -\int_{\Omega_{T}}((u-v-l)^{\epsilon})_{+}^{2}\partial_{t}\eta+\frac{1}{2}\eta\partial_{t}((u-v-l)^{\epsilon})_{+}^{2}\d z\\
= & -\frac{1}{2}\int_{\Omega_{T}}((u-v-l)^{\epsilon})_{+}^{2}\partial_{t}\eta\d z\\
\underset{\epsilon\rightarrow0}{\rightarrow} & -\frac{1}{2}\int_{\Omega_{T}}(u-v-l)_{+}^{2}\partial_{t}\eta\d z.
\end{align*}
Moreover, by the Lebesgue differentiation theorem for a.e.\ $s\in(0,T)$
it holds
\[
-\frac{1}{2}\int_{\Omega_{T}}(u-v-l)_{+}^{2}\partial_{t}\eta\d z=\frac{1}{4h}\int_{s-h}^{s+h}\int_{\Omega}w^{2}(x,t)\d x\d t\underset{h\rightarrow0}{\rightarrow}\frac{1}{2}\int_{\Omega}w^{2}(x,s)\d x.
\]
The terms at the right-hand side of (\ref{eq:comparison pre main est 1<p<2})
converge similarly. Hence for a.e.\ $s\in(0,T)$ we have
\begin{align}
\frac{1}{2}\int_{\Omega} & w^{2}(x,s)\d x\nonumber \\
\leq & \int_{\Omega_{s}}\left|f(Du)-f(Dv)\right|w\d z-\int_{\Omega_{s}}\left(\left|Du\right|^{p-2}Du-\left|Dv\right|^{p-2}Dv\right)\cdot Dw\d z\nonumber \\
=: & I_{1}-I_{2}.\label{eq:comparison main est 1<p<2}
\end{align}

\textbf{(Step 2)} \begingroup\allowdisplaybreaks We seek to absorb
some of $\smash{I_{1}}$ into $\smash{I_{2}}$ so that we can conclude
from Grönwall's inequality that $\smash{w\equiv0}$ almost everywhere.
Since $\smash{f}$ is locally Lipschitz continuous, there are constants
$\smash{M\geq\max(2\left\Vert Du\right\Vert _{L^{\infty}(\Omega_{T})},1)}$
and $\smash{L=L(M)}$ such that
\begin{equation}
\left|f(\xi)-f(\eta)\right|\leq L\left|\xi-\eta\right|\text{ when }\left|\xi\right|,\left|\eta\right|<M.\label{eq:comparison lipcnd}
\end{equation}
We denote $\Omega_{s}^{+}:=\left\{ x\in\Omega_{s}:w\geq0\right\} $,
\[
A:=\Omega_{s}^{\text{+}}\cap\left\{ \left|Dv\right|<M\right\} \text{ and }B:=\Omega_{s}^{+}\cap\left\{ \left|Dv\right|\geq M\right\} .
\]
Observe that in $B$ we have by the growth condition (\ref{eq:gcnd}),
choice of $M$ and the assumption that $\beta\geq1$
\begin{equation}
\left|f(Du)\right|\leq C_{f}(1+\left|Du\right|^{\beta})\leq C_{f}(M+M^{\beta})\leq2C_{f}M^{\beta}\leq2C_{f}\left|Dv\right|^{\beta}\label{eq:comparison B est}
\end{equation}
and

\begin{equation}
\left|f(Dv)\right|\leq C_{f}(1+\left|Dv\right|^{\beta})\leq2C_{f}\left|Dv\right|^{\beta}.\label{eq:comparison growthcnd}
\end{equation}
It follows from (\ref{eq:comparison lipcnd}), (\ref{eq:comparison B est}),
(\ref{eq:comparison growthcnd}) and Young's inequality that
\begin{align}
I_{1}\leq & \int_{A}L\left|Du-Dv\right|w\d z+\int_{B}\left(\left|f(Du)\right|+\left|f(Dv)\right|\right)w\d z\nonumber \\
\leq & \int_{A}L\left|Du-Dv\right|w\d z+\int_{B}4C_{f}\left|Dv\right|^{\beta}w\d z\nonumber \\
\leq & \int_{A}\epsilon\left|Du-Dv\right|^{2}+C(\epsilon,L)w^{2}\d z+\int_{B}\epsilon\left|Dv\right|^{\frac{\beta p}{\beta}}+C(\epsilon,p,\beta,L,C_{f})w^{\frac{p}{p-\beta}}\d z\nonumber \\
\leq & \epsilon\int_{A}\left|Du-Dv\right|^{2}\d z+\epsilon\int_{B}\left|Dv\right|^{p}\d z+C(\epsilon,p,\beta,L,C_{f},\left\Vert w\right\Vert _{L^{\infty}})\int_{\Omega_{s}}w^{2}\d z,\label{eq:comparison (1<p<2) I_1 est}
\end{align}
where in the last step we used that $\frac{p}{p-\beta}>2$ to estimate
\[
\int_{\Omega_{s}}w^{p/(p-\beta)}\d z=\int_{\Omega_{s}}w^{p/\left(p-\beta\right)-2}w^{2}\d z\leq\left\Vert w\right\Vert _{L^{\infty}(\Omega_{T})}^{p/(p-\beta)-2}\int_{\Omega_{s}}w^{2}\d z.
\]
Using the vector inequality
\begin{equation}
\left(\left|a\right|^{p-2}a-\left|b\right|^{p-2}b\right)\cdot\left(a-b\right)\geq\left(p-1\right)\left|a-b\right|^{2}\left(1+\left|a\right|^{2}+\left|b\right|^{2}\right)^{\frac{p-2}{2}},\label{eq:algebraic ineq 1<p<2}
\end{equation}
which holds when $1<p<2$ \cite[p98]{lindqvist_plaplace}, we get
\begin{align}
I_{2}= & \int_{\Omega_{s}}\left(\left|Du\right|^{p-2}Du-\left|Dv\right|^{p-2}Dv\right)\cdot Dw\d z\nonumber \\
\geq & (p-1)\int_{\Omega_{s}^{+}}\frac{\left|Du-Dv\right|^{2}}{\left(1+\left|Du\right|^{2}+\left|Dv\right|^{2}\right)^{\frac{2-p}{2}}}\d z\nonumber \\
\geq & (p-1)\int_{A}\frac{\left|Du-Dv\right|^{2}}{\left(1+M^{2}+M^{2}\right)^{\frac{2-p}{2}}}\d z+(p-1)\int_{B}\frac{\left(\left|Dv\right|-\left|Du\right|\right)^{2}}{\left(3\left|Dv\right|^{2}\right)^{\frac{2-p}{2}}}\d z\nonumber \\
\geq & C(p,M)\int_{A}\left|Du-Dv\right|^{2}\d z+\left(p-1\right)\int_{B}\frac{\left(\left|Dv\right|-\frac{1}{2}M\right)^{2}}{\left(3\left|Dv\right|^{2}\right)^{\frac{2-p}{2}}}\d z\nonumber \\
\geq & C(p,M)\int_{A}\left|Du-Dv\right|^{2}\d z+\left(p-1\right)\int_{B}\frac{\left(\frac{1}{2}\left|Dv\right|\right)^{2}}{\left(3\left|Dv\right|^{2}\right)^{\frac{2-p}{2}}}\d z\nonumber \\
= & C(p,M)\int_{A}\left|Du-Dv\right|^{2}\d z+C(p)\int_{B}\left|Dv\right|^{p}\d z,\label{eq:comparison (1<p<2) I_2 est}
\end{align}
where $C(p,M),C(p)>0$. Combining the estimates (\ref{eq:comparison (1<p<2) I_1 est})
and (\ref{eq:comparison (1<p<2) I_2 est}) we arrive at
\begin{align*}
I_{1}-I_{2}\leq & \left(\epsilon-C(p,M)\right)\int_{A}\left|Du-Dv\right|^{2}\d z+\left(\epsilon-C(p)\right)\int_{B}\left|Dv\right|^{p}\d z+C_{0}\int_{\Omega_{s}}w^{2}\d z,
\end{align*}
where $C_{0}=C(\epsilon,p,\beta,L,C_{f},\left\Vert w\right\Vert _{L^{\infty}})$.
Recalling (\ref{eq:comparison main est 1<p<2}) and taking small enough
$\epsilon$ yields
\[
\int_{\Omega}w^{2}(x,s)\d x\leq2C_{0}\int_{\Omega_{s}}w^{2}\d z.
\]
Since this holds for a.e.\ $s\in(0,T)$, Grönwall's inequality implies
that $w\equiv0$ a.e.\ in $\Omega_{T}$. Finally, letting $l\rightarrow0$
yields that $u-v\leq0$ a.e.\ in $\Omega_{T}$. \endgroup
\end{proof}
\begin{lem}
\label{lem:comparison lemma p>2}Let $p\geq2$ and let $f$ be locally
Lipschitz. Let $v\in L^{\infty}(\Omega_{T})$ be a weak supersolution
to (\ref{eq:p-para f}) and let $u\in L^{\infty}(\Omega_{T})$ be
a weak subsolution to 
\[
\partial_{t}u-\Delta_{p}u-f(Du)\leq-\delta\quad\text{in }\Omega_{T}
\]
for some $\delta>0$. Assume that for all $(x_{0},t_{0})\in\partial_{p}\Omega_{T}$
\[
\limsup_{(x,t)\rightarrow(x_{0},t_{0})}u(x,t)\leq\liminf_{(x,t)\rightarrow(x_{0},t_{0})}v(x,t).
\]
 Suppose also that $Du\in L^{\infty}(\Omega_{T})$. Then $u\leq v$
a.e.\ in $\Omega_{T}$.
\end{lem}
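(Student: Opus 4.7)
The argument closely parallels Lemma \ref{lem:comparison lemma 1<p<2}, with two key modifications arising from the range $p \geq 2$: the vector inequality for the $p$-Laplacian provides a bound on $|Du-Dv|^p$ rather than a weighted $|Du-Dv|^2$, and as a consequence the strict subsolution hypothesis (the $-\delta$ term) becomes essential for closing the estimate.

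I would begin by repeating Step 1 of Lemma \ref{lem:comparison lemma 1<p<2} verbatim. Fix $l > 0$, set $w := (u-v-l)_+$, and use $\varphi := \eta((u-v-l)^{\epsilon})_+$ as a test function in Lemma \ref{lem:time conv lemma}, applied both to the supersolution $v$ and to the strict subsolution $u$. The strict inequality contributes an extra $-\delta\int\varphi\,dz$ on the right-hand side of the mollified subsolution equation; after subtraction and passage to the limits $\epsilon \to 0$ and $h \to 0$, one obtains, for a.e.\ $s \in (0,T)$,
\[
\frac{1}{2}\int_{\Omega} w^2(x,s)\,dx + \delta \int_{\Omega_s} w\,dz \leq I_1 - I_2,
\]
where $I_1$ and $I_2$ are defined exactly as in Lemma \ref{lem:comparison lemma 1<p<2}. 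The new ingredient compared to the $1<p<2$ case is the extra $\delta\int_{\Omega_s} w\,dz$ on the left.

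Next, invoking the vector inequality $(|a|^{p-2}a - |b|^{p-2}b)\cdot(a-b) \geq 2^{2-p}|a-b|^{p}$ valid for $p\geq 2$ (see \cite{lindqvist_plaplace}), I get $I_2 \geq c(p)\int_{\Omega_s^+}|Du-Dv|^{p}\,dz$. With $M \geq \max(2\|Du\|_{L^\infty},1)$, with $L=L(M)$ the Lipschitz constant of $f$ on the ball of radius $M$, and with the splitting $\Omega_s^+ = A \cup B$ where $A := \{|Dv|<M\}$ and $B:=\{|Dv|\geq M\}$, I would apply Young's inequality with exponents $(p,\,p/(p-1))$ on $A$ (using $|f(Du)-f(Dv)|\leq L|Du-Dv|$) and with exponents $(p/\beta,\,p/(p-\beta))$ on $B$ (using $|Du-Dv|\geq |Dv|/2$ and the growth condition to write $|f(Du)|+|f(Dv)| \leq 4C_f(2|Du-Dv|)^\beta$). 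Choosing the Young parameter sufficiently small to absorb all $|Du-Dv|^{p}$ terms into $I_2$ yields
\[
\frac{1}{2}\int_{\Omega} w^2(x,s)\,dx + \delta \int_{\Omega_s} w\,dz \leq C\int_{\Omega_s}\bigl(w^{p/(p-1)} + w^{p/(p-\beta)}\bigr)\,dz.
\]

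The hard part will be closing this estimate, since the error exponents $p/(p-1)$ and $p/(p-\beta)$ may be strictly less than $2$, precluding the direct Gronwall argument on $\int w^2$ used in Lemma \ref{lem:comparison lemma 1<p<2}. Writing $W := \|w\|_{L^\infty}$ and using the pointwise bound $w^{q} \leq W^{q-1}w$ valid for $q\geq 1$, the right-hand side is controlled by $K(W)\int_{\Omega_s} w\,dz$, where $K(W) := C\bigl(W^{1/(p-1)} + W^{\beta/(p-\beta)}\bigr)$ is a continuous increasing function vanishing at $W=0$. Whenever $W \leq \kappa := K^{-1}(\delta)$, the $\delta\int w$ on the left absorbs the right-hand side, forcing $\int_{\Omega} w^2(x,s)\,dx \leq 0$ and hence $w \equiv 0$. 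To remove the smallness assumption on $W$, I would iterate on levels: set $L_0 := \|u-v\|_{L^\infty}$ and $L_{n+1} := \max(L_n - \kappa,\, 0)$, and prove by induction that $u-v \leq L_n$ a.e., applying the above argument at step $n+1$ with $l = L_{n+1}$, so that $W \leq L_n - L_{n+1} \leq \kappa$ holds automatically. After finitely many iterations $L_n = 0$, and letting $l \to 0$ in the final step yields $u \leq v$ a.e.\ in $\Omega_T$.
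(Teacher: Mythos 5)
Your proof is correct, and the derivation up to the inequality
\[
\frac{1}{2}\int_{\Omega} w^2(x,s)\,dx + \delta \int_{\Omega_s} w\,dz \leq C\int_{\Omega_s}\bigl(w^{p/(p-1)} + w^{p/(p-\beta)}\bigr)\,dz
\]
matches the paper's estimate (\ref{eq:comparison (p>2) some ineq}) essentially term for term (the paper writes $I_{2}\geq C(p)\int_{A}|Du-Dv|^{p} + C(p)\int_{B}|Dv|^{p}$, but since $|Du-Dv|\sim|Dv|$ on $B$ your single lower bound $I_{2}\geq c(p)\int_{\Omega_{s}^{+}}|Du-Dv|^{p}$ is equivalent). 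Where you genuinely diverge is in closing the argument. The paper proves a pointwise algebraic fact: for $\alpha>1$, $a_{0},\gamma,\delta>0$ there is $C$ with $\gamma a^{\alpha}\leq\delta a+Ca^{2}$ on $[0,a_{0})$; applied with $a=w$ this absorbs the $w^{p/(p-1)}$ and $w^{p/(p-\beta)}$ terms into $-\delta w + Cw^{2}$, and a single Gr\"onwall application finishes the proof. You instead bound $w^{q}\leq W^{q-1}w$ globally, obtain a coefficient $K(W)$ that must satisfy $K(W)\leq\delta$, and then iterate over finitely many levels $L_{0}>L_{1}>\dots$ with gap $\kappa=K^{-1}(\delta)$ so that the smallness hypothesis holds at each stage. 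Both arguments are valid and both ultimately rest on the observation that the error exponents exceed $1$, so the $\delta$ term can dominate near $w=0$; the paper's route is a one-pass Gr\"onwall argument which is somewhat cleaner, whereas yours trades the algebraic lemma for a finite induction on levels. Your sketch is slightly imprecise in the last stage of the iteration (once $L_{n}\leq\kappa$ you should run the step with arbitrary $l\in(0,L_{n}]$ and then let $l\to0$, rather than setting $l=0$ directly), but this is easy to repair.
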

\begin{proof}
Let $l>0$ and set $w:=(u-v-l)_{+}$. Let also $s\in(0,T)$. Repeating
the first step of the proof of Lemma \ref{lem:comparison lemma 1<p<2},
we arrive at the inequality
\begin{align}
\frac{1}{2}\int_{\Omega} & w^{2}(x,s)\d x\nonumber \\
\leq & \int_{\Omega_{s}}\left|f(Du)-f(Dv)\right|w\d z-\int_{\Omega_{s}}\left(\left|Du\right|^{p-2}Du-\left|Dv\right|^{p-2}Dv\right)\cdot Dw\d z-\int_{\Omega_{s}}\delta w\d z\nonumber \\
=: & I_{1}-I_{2}-\int_{\Omega_{s}}\delta w\d z.\label{eq:comparison main est p>2}
\end{align}
Moreover, we define the constants $M$ and $L$, and the sets $A$
and $B$, exactly in the same way as in the proof of Lemma \ref{lem:comparison lemma 1<p<2}.
Then by (\ref{eq:comparison lipcnd}), (\ref{eq:comparison B est}),
(\ref{eq:comparison growthcnd}) and Young's inequality
\begin{align}
I_{1}\leq & \int_{A}L\left|Du-Dv\right|w\d z+\int_{B}\left(\left|f(Du)\right|+\left|f(Dv)\right|\right)w\d z\nonumber \\
\leq & \int_{A}\epsilon\left|Du-Dv\right|^{p}+C(\epsilon,L)w^{\frac{p}{p-1}}\d z+\int_{B}4C_{f}\left|Dv\right|^{\beta}w\d z\nonumber \\
\leq & \epsilon\int_{A}\left|Du-Dv\right|^{p}\d z+\epsilon\int_{B}\left|Dv\right|^{p}\d z+C(\epsilon,p,\beta,L,C_{f})\int_{\Omega_{s}}w^{\frac{p}{p-1}}+w^{\frac{p}{p-\beta}}\d z.\label{eq:comparison (p>2) I_1 est}
\end{align}
Using the vector inequality
\begin{equation}
\left(\left|a\right|^{p-2}a-\left|b\right|^{p-2}b\right)\cdot\left(a-b\right)\geq2^{2-p}\left|a-b\right|^{p},\label{eq:algebraic ineq p>2}
\end{equation}
which holds when $p\geq2$ \cite[p95]{lindqvist_plaplace}, we get
\begin{align*}
I_{2}\geq & C(p)\int_{A}\left|Du-Dv\right|^{p}\d z+C(p)\int_{B}\left|Du-Dv\right|^{p}\d z.
\end{align*}
Furthermore, since in $B$ it holds
\[
\left|Du-Dv\right|^{p}\geq\left(\left|Dv\right|-\left|Du\right|\right)^{p}\geq\left(\left|Dv\right|-\frac{1}{2}M\right)^{p}\geq C(p)\left|Dv\right|^{p},
\]
we arrive at
\begin{equation}
I_{2}\geq C(p)\int_{A}\left|Du-Dv\right|^{p}\d z+C(p)\int_{B}\left|Dv\right|^{p}\d z.\label{eq:comparison (p>2) I_2 est}
\end{equation}
Combining (\ref{eq:comparison (p>2) I_1 est}) and (\ref{eq:comparison (p>2) I_2 est})
with (\ref{eq:comparison main est p>2}) we get
\begin{align*}
\frac{1}{2}\int_{\Omega}w^{2}\d x\leq & \left(\epsilon-C(p)\right)\left(\int_{A}\left|Du-Dv\right|^{p}\d z+\int_{B}\left|Dv\right|^{p}\d z\right)\\
 & +\int_{\Omega_{s}}C(\epsilon,p,\beta,L,C_{f})\left(w^{\frac{p}{p-1}}+w^{\frac{p}{p-\beta}}\right)-\delta w\d z.
\end{align*}
By taking small enough $\epsilon=\epsilon(p)$, the above becomes
\begin{equation}
\int_{\Omega}w^{2}(x,s)\d x\leq\int_{\Omega_{s}}C(p,\beta,L,C_{f})\left(w^{\frac{p}{p-1}}+w^{\frac{p}{p-\beta}}\right)-\delta w\d z.\label{eq:comparison (p>2) some ineq}
\end{equation}
Observe that since $w$ is bounded and $\frac{p}{p-1},\frac{p}{p-\beta}>1$,
the integrand at the right-hand side is bounded by some constant times
$w^{2}$. To argue this rigorously, we write down the following algebraic
fact.

If $a_{0},\delta,\gamma>0$ and $\alpha>1$, then there exists $C(\alpha,\gamma,\delta,a_{0})>0$
such that
\[
\gamma a^{\alpha}\leq\delta a+C(\alpha,\gamma,\delta,a_{0})a^{2}\text{ for all }a\in[0,a_{0}).
\]
To see this, let first $\alpha<2$. Then by Young's inequality
\begin{align*}
\gamma a^{\alpha}=\gamma a\cdot a^{\alpha-1}\leq & \frac{\delta}{1+a_{0}^{\frac{2}{3-\alpha}}}a^{\frac{2}{3-\alpha}}+C(\alpha,\gamma,\delta,a_{0})a^{\left(\alpha-1\right)\cdot\frac{2}{\alpha-1}}\\
\leq & \delta a+C(\alpha,\gamma,\delta,a_{0})a^{2}.
\end{align*}
If $\alpha\geq2,$ then
\[
\gamma a^{\alpha}=\gamma a^{\alpha-2}\cdot a^{2}\leq\gamma a_{0}^{\alpha-2}a^{2}.
\]

Applying the algebraic fact on (\ref{eq:comparison (p>2) some ineq})
we get
\[
\int_{\Omega}w^{2}(x,s)\d x\leq C(p,\beta,L,C_{f},\delta,\left\Vert w\right\Vert _{L^{\infty}})\int_{\Omega_{s}}w^{2}\d z.
\]
The conclusion now follows from Grönwall's inequality and letting
$l\rightarrow0$.
\end{proof}
Next we use the previous comparison results to prove the comparison
principle for general continuous $f$.
\begin{thm}
\label{thm:comparison principle}Let $1<p<\infty$. Let $\smash{u,v\in L^{\infty}(\Omega_{T})}$
respectively be weak sub- and supersolutions to (\ref{eq:p-para f})
in $\smash{\Omega_{T}}$. Assume that for all $\smash{(x_{0},t_{0})\in\partial_{p}\Omega_{T}}$
\[
\limsup_{(x,t)\rightarrow(x_{0},t_{0})}u(x,t)\leq\liminf_{(x,t)\rightarrow(x_{0},t_{0})}v(x,t).
\]
Assume also that $Du\in L^{\infty}(\Omega_{T})$. Then $u\leq v$
a.e.\ in $\Omega_{T}$.
\end{thm}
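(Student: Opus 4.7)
The plan is to carry out precisely the scheme announced just after Lemma \ref{lem:time conv lemma}: smooth $f$ into a locally Lipschitz continuous nonlinearity $f_\delta$, perturb $u$ and $v$ by a strictly increasing function of $t$ so that the resulting pair is a strict sub/super solution of the smoothed equation, apply the Lipschitz comparison lemmas, and pass to the limit.

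Fix $\delta>0$ and choose a locally Lipschitz continuous $f_\delta\in C(\mathbb{R}^N)$ — for instance by a suitable mollification — with $\|f-f_\delta\|_{L^\infty(\mathbb{R}^N)}$ small (to be fixed momentarily) and satisfying the same $\beta$-growth bound \eqref{eq:gcnd} as $f$, up to a constant. Set $g(t):=\delta/(T-t/2)$, $u_\delta:=u-g$, and $v_\delta:=v+g$. Since $g$ is spatially constant, $Du_\delta=Du$ and $Dv_\delta=Dv$, while $\partial_t u_\delta=\partial_t u-g'(t)$ and $\partial_t v_\delta=\partial_t v+g'(t)$, with $g'(t)>0$ uniformly bounded below on $(0,T)$. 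Testing the weak sub/super-solution inequalities for $u,v$ against an arbitrary non-negative $\varphi\in C_0^\infty(\Omega_T)$ and using these identities yields, in the weak sense,
\[
\partial_t u_\delta-\Delta_p u_\delta-f_\delta(Du_\delta)\leq \|f-f_\delta\|_\infty-g'(t)\leq -\eta,
\]
and the analogous $\geq\eta$ inequality for $v_\delta$, for some $\eta=\eta(\delta,T)>0$, provided $\|f-f_\delta\|_\infty$ has been chosen strictly smaller than $\inf_{(0,T)}g'$. Thus $u_\delta$ is a strict weak subsolution and $v_\delta$ a strict weak supersolution of \eqref{eq:p-para f} with $f$ replaced by $f_\delta$. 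The boundary condition $\limsup u_\delta\leq \liminf v_\delta$ on $\partial_p\Omega_T$ follows from the hypothesis on $u,v$ together with $g>0$, and $Du_\delta\in L^\infty(\Omega_T)$ is inherited from $Du\in L^\infty$.

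Now I would apply Lemma \ref{lem:comparison lemma 1<p<2} when $1<p<2$, or Lemma \ref{lem:comparison lemma p>2} when $p\geq 2$ (the strict subsolution excess $\eta$ of $u_\delta$ playing the role of the constant $\delta$ in the hypothesis of that lemma), to the pair $(u_\delta,v_\delta,f_\delta)$. Either way one concludes $u_\delta\leq v_\delta$ a.e.\ in $\Omega_T$, that is $u-v\leq 2g(t)\leq C\delta/T$ a.e.; sending $\delta\to 0$ yields $u\leq v$ a.e.\ in $\Omega_T$. The step I expect to require the most care is the weak-formulation verification of the strict sub/super-solution inequality for $u_\delta,v_\delta$: since $u,v$ have no pointwise time derivative, the identity $\partial_t u_\delta=\partial_t u-g'(t)$ and the absorption of the pointwise error $|f(Du)-f_\delta(Du)|\leq\|f-f_\delta\|_\infty$ must be carried out at the integral level against admissible test functions, in the same vein as the time-mollification arguments used in the proofs of Lemmas \ref{lem:comparison lemma 1<p<2} and \ref{lem:comparison lemma p>2}.
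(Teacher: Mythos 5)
Your proposal is correct and follows essentially the same route as the paper: perturb by $\pm\delta/(T-t/2)$ to gain a strict inequality, replace $f$ by a locally Lipschitz $f_{\delta}$ with $\left\Vert f-f_{\delta}\right\Vert _{L^{\infty}(\mathbb{R}^{N})}$ small, apply Lemma \ref{lem:comparison lemma 1<p<2} or Lemma \ref{lem:comparison lemma p>2} to the pair $(u_{\delta},v_{\delta})$, and let $\delta\rightarrow0$. The only cosmetic difference is that the paper obtains the global uniform locally Lipschitz approximation $f_{\delta}$ from \cite{miculescu00} (plain mollification of a merely continuous $f$ need not be uniformly close on all of $\mathbb{R}^{N}$), and the verification that $u_{\delta},v_{\delta}$ satisfy the perturbed weak inequalities is just an integration by parts on the smooth term $\delta/(T-t/2)$, no time-mollification of $u,v$ being needed there.
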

\begin{proof}
For $\delta>0$, define
\[
u_{\delta}:=u-\frac{\delta}{T-t/2}.
\]
Then for any non-negative test function $\varphi\in C_{0}^{\infty}(\Omega_{T})$
we have by integration by parts
\begin{align*}
\int_{\Omega_{T}}-u_{\delta}\partial_{t}\varphi\d z= & \int_{\Omega_{T}}-u\partial_{t}\varphi+\frac{\delta}{T-t/2}\partial_{t}\varphi\d z\\
= & \int_{\Omega_{T}}-u\partial_{t}\varphi-\varphi\frac{\delta}{2\left(T-t/2\right)^{2}}\d z\\
\leq & \int_{\Omega_{T}}-u\partial_{t}\varphi-\varphi\frac{\delta}{2T^{2}}\d z.
\end{align*}
Since $\smash{f}$ is continuous, there is a locally Lipschitz continuous
function $\smash{f_{\delta}}$ such that $\smash{\left\Vert f-f_{\delta}\right\Vert _{L^{\infty}(\mathbb{R}^{N})}\leq\frac{\delta}{4T}}$
(see e.g.\ \cite{miculescu00}). Then, since $u$ is a weak subsolution,
we have
\begin{align*}
\int_{\Omega_{T}} & -u_{\delta}\partial_{t}\varphi+\left|Du_{\delta}\right|^{p-2}Du_{\delta}\cdot D\varphi-\varphi f_{\delta}(Du_{\delta})\d z\\
\leq & \int_{\Omega_{T}}-u\partial_{t}\varphi+\left|Du\right|^{p-2}Du\cdot D\varphi-\varphi f(Du)+\varphi\left\Vert f-f_{\delta}\right\Vert _{L^{\infty}(\mathbb{\mathbb{R}^{N}})}-\varphi\frac{\delta}{2T^{2}}\d z\\
\leq & \int_{\Omega_{T}}-\frac{\delta}{4T^{2}}\varphi\d z.
\end{align*}
Hence $u_{\delta}$ is a weak subsolution to 
\[
\partial_{t}u_{\delta}-\Delta_{p}u_{\delta}-f_{\delta}(Du_{\delta})\leq-\frac{\delta}{4T^{2}}\quad\text{in }\Omega_{T}.
\]
Similarly, since $v$ is a weak supersolution, we define
\[
v_{\delta}:=v+\frac{\delta}{T-t/2}
\]
 and deduce that $v_{\delta}$ is a weak supersolution to
\[
\partial_{t}v_{\delta}-\Delta_{p}v_{\delta}-f_{\delta}(Dv_{\delta})\geq0\quad\text{in }\Omega_{T}.
\]
Now it follows from the comparison Lemmas \ref{lem:comparison lemma 1<p<2}
and \ref{lem:comparison lemma p>2} that $u_{\delta}\leq v_{\delta}$
a.e.\ in $\Omega_{T}$. Thus
\[
u\leq v+\frac{2\delta}{T-t/2}\text{\quad a.e.\ in }\Omega_{T}.
\]
Letting $\delta\rightarrow0$ finishes the proof.
\end{proof}
Now that the comparison principle is proven, we are ready to show
that weak solutions are viscosity solutions.
\begin{thm}
\label{thm:weak is visc}Let $1<p<\infty$. Let $\smash{u\in L^{\infty}(\Xi)}$
be a lower semicontinuous weak supersolution to (\ref{eq:p-para f})
in $\Xi$. Then $u$ is a viscosity supersolution to (\ref{eq:p-para f})
in $\Xi$.
\end{thm}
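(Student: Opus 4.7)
The plan is to argue by contradiction using the comparison principle (Theorem~\ref{thm:comparison principle}). Assume $u$ is not a viscosity supersolution. Then there exist $(x_0,t_0)\in\Xi$ and $\varphi\in C^2(\Xi)$ touching $u$ from below at $(x_0,t_0)$ in the sense of the definition, such that
\[
\limsup_{\substack{(x,t)\rightarrow(x_0,t_0)\\ x\not=x_0}}\bigl(\partial_t\varphi(x,t)-\Delta_p\varphi(x,t)-f(D\varphi(x,t))\bigr)<0.
\]
By continuity, this yields $\theta>0$ and a cylinder $Q_r:=B_r(x_0)\times(t_0-r,t_0+r)\Subset\Xi$ on which the pointwise inequality $\partial_t\varphi-\Delta_p\varphi-f(D\varphi)\leq -\theta$ holds off the line $\{x=x_0\}$, a set of full measure in $Q_r$. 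Since $\xi\mapsto|\xi|^{p-2}\xi$ is continuous at the origin, the vector field $|D\varphi|^{p-2}D\varphi$ is continuous on $\overline{Q_r}$ and $\Delta_p\varphi$ is locally integrable near $\{x=x_0\}$ (when $1<p<2$ its worst blowup is $|x-x_0|^{p-2}$, integrable because $p>1\geq 2-N$). A standard integration by parts then upgrades the pointwise strict inequality into the strict weak subsolution inequality for $\varphi$ on the full cylinder $Q_r$.

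The next step is to shift $\varphi$ upward to violate comparison. The lower semicontinuous function $u-\varphi$ is strictly positive on the compact set $\partial_p Q_r$, so it attains a positive minimum $m:=\min_{\partial_p Q_r}(u-\varphi)>0$. Define
\[
\tilde\varphi:=\varphi+\tfrac{m}{2}.
\]
Then $\tilde\varphi$ is still a weak subsolution of (\ref{eq:p-para f}) in $Q_r$ (the equation is autonomous), it satisfies $\tilde\varphi\leq u-m/2$ on $\partial_p Q_r$, and at the same time $\tilde\varphi(x_0,t_0)=u(x_0,t_0)+m/2>u(x_0,t_0)$.

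Finally, invoke Theorem~\ref{thm:comparison principle} with subsolution $\tilde\varphi$ and supersolution $u$ on $Q_r$. The hypotheses $\tilde\varphi,u\in L^\infty(Q_r)$ and $D\tilde\varphi\in L^\infty(Q_r)$ hold because $\tilde\varphi\in C^2$; and the parabolic boundary inequality $\limsup\tilde\varphi\leq\liminf u$ on $\partial_p Q_r$ follows at once from continuity of $\tilde\varphi$, lower semicontinuity of $u$, and $\tilde\varphi\leq u-m/2$ on $\partial_p Q_r$. Hence $\tilde\varphi\leq u$ a.e.\ in $Q_r$. However, $\tilde\varphi-u$ is upper semicontinuous (sum of a continuous and a u.s.c.\ function), so $\{\tilde\varphi>u\}$ is open; since $(x_0,t_0)$ lies in this set, it has positive Lebesgue measure, contradicting $\tilde\varphi\leq u$ a.e. The point I expect to require the most care is justifying that the pointwise classical inequality becomes a weak subsolution inequality on all of $Q_r$ in the singular range $1<p<2$, because the classical $\Delta_p\varphi$ need not be defined on $\{x=x_0\}$; the local integrability of $|D\varphi|^{p-2}$ against test functions, noted above, resolves this cleanly.
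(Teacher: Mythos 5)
Your overall architecture is the same as the paper's: argue by contradiction, show that the admissible test function is a strict weak subsolution on a small cylinder $Q_r$, lift it by a constant so that it stays below $u$ on $\partial_p Q_r$ but above $u$ at $(x_0,t_0)$, and invoke Theorem \ref{thm:comparison principle}. The lifting, the verification of the boundary hypothesis via lower semicontinuity of $u$, and the conversion of the a.e.\ conclusion into a pointwise contradiction through upper semicontinuity of $\tilde\varphi-u$ are all correct (the last point is in fact spelled out more carefully than in the paper). However, the step you yourself flag as delicate contains a genuine gap when $1<p<2$. You justify passing from the pointwise inequality off $\{x=x_0\}$ to the weak inequality on all of $Q_r$ by claiming $|\Delta_p\varphi|\lesssim|x-x_0|^{p-2}$, hence $\Delta_p\varphi\in L^1(Q_r)$. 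That bound would require $|D\varphi(x,t)|\geq c\,|x-x_0|$, which is not part of the definition of an admissible test function: only $D\varphi\neq0$ for $x\neq x_0$ is assumed, and $D\varphi$ may vanish on the line at an arbitrary, even oscillatory, rate. In fact $\Delta_p\varphi$ need not be locally integrable at all. For $N=1$, $x_0=0$ and $1<p\leq 4/3$, take $\varphi(x,t)=\Phi(x)-(t-t_0)^2+c$ with $\Phi'(x)=x^{3}\bigl(2+\sin(1/x)\bigr)$, $\Phi'(0)=0$: this $\varphi$ is $C^2$ with $D\varphi\neq0$ off the line, but $|\Delta_p\varphi|=(p-1)|\Phi'|^{p-2}|\Phi''|\gtrsim |x|^{3p-5}|\cos(1/x)|-C|x|^{3p-4}$ fails to be integrable near $x=0$. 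So "integrate the pointwise inequality against a test function over $Q_r$" is not available as stated, and the contradiction hypothesis hands you an arbitrary admissible $\varphi$, so the argument must cover such cases.

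The step itself is true, and the repair is exactly what the paper does: no integrability of $\Delta_p\varphi$ is needed. Integrate by parts (Gauss's theorem) on $Q_r\setminus\{|x-x_0|\leq\rho\}$, where $\varphi$ is a classical strict subsolution; the flux $|D\varphi|^{p-2}D\varphi$ is continuous on $\overline{Q_r}$ (the observation you already made, valid precisely because $p>1$), so the surface integral over $\{|x-x_0|=\rho\}$ vanishes as $\rho\to0$, while the weak-form integral over the punctured cylinder converges to the one over $Q_r$ by dominated convergence, since its integrand is bounded. The sign condition $\partial_t\varphi-\Delta_p\varphi-f(D\varphi)\leq-\theta$ is then only ever used on the punctured region, and letting $\rho\to0$ yields the strict weak subsolution inequality on $Q_r$. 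With this replacement (note that for $p\geq2$ there is no issue, as $\Delta_p\varphi$ is bounded near the line), the rest of your proof goes through and coincides with the paper's.
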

\begin{proof}
Assume on the contrary that there is $\smash{\phi\in C^{2}(\Xi)}$
touching $u$ from below at $\smash{(x_{0},t_{0})\in\Xi}$, $\smash{D\phi(x,t)\not=0}$
for $\smash{x\not=x_{0}}$ and
\begin{equation}
\limsup_{\substack{\substack{(x,t)\rightarrow(x_{0},t_{0})\\
x\not=x_{0}
}
}
}\left(\partial_{t}\phi(x,t)-\Delta_{p}\phi(x,t)-f(D\phi(x,t))\right)<0.\label{eq:weak is visc lipschitz 1<p<2}
\end{equation}
Denote $Q_{r}:=B_{r}(x_{0})\times\left(t_{0}-r,t_{0}+r\right)$. It
follows from above that there are $r>0$ and $\delta>0$ such that
\begin{equation}
\partial_{t}\phi-\Delta_{p}\phi-f(D\phi)<-\delta\quad\text{in }Q_{r}\setminus\left\{ x=x_{0}\right\} .\label{eq:weak is visc lipschitz 1<p<2 someineq}
\end{equation}
Indeed, otherwise there would be a sequence $(x_{n},t_{n})\rightarrow(x_{0},t_{0})$
such that $x_{n}\not=x_{0}$ and 
\[
\partial_{t}\phi(x_{n},t_{n})-\Delta_{p}\phi(x_{n},t_{n})-f(D\phi(x_{n},t_{n}))>-\frac{1}{n},
\]
but this contradicts (\ref{eq:weak is visc lipschitz 1<p<2}). Using
Gauss's theorem and (\ref{eq:weak is visc lipschitz 1<p<2 someineq})
we obtain for any non-negative test function $\varphi\in C_{0}^{\infty}(Q_{r})$
that 
\begin{align*}
\int_{Q_{r}} & -\phi\partial_{t}\varphi+\left|D\phi\right|^{p-2}D\phi\cdot D\varphi-\varphi f(D\phi)\d z\\
= & \lim_{\rho\rightarrow0}\int_{Q_{r}\setminus\left\{ \left|x-x_{0}\right|\leq\rho\right\} }-\phi\partial_{t}\varphi+\left|D\phi\right|^{p-2}D\phi\cdot D\varphi-\varphi f(D\phi)\d z\\
= & \lim_{\rho\rightarrow0}\big(\int_{Q_{r}\setminus\left\{ \left|x-x_{0}\right|\leq\rho\right\} }\varphi\partial_{t}\phi-\varphi\div(\left|D\phi\right|^{p-2}D\phi)-\varphi f(D\phi)\d z\\
 & \ \ \ \ \ \ +\int_{t_{0}-r}^{t_{0}+r}\int_{\left\{ \left|x-x_{0}\right|=\rho\right\} }\varphi\left|D\phi\right|^{p-2}D\phi\cdot\frac{(x-x_{0})}{\rho}\d S\d t\big)\\
= & \lim_{\rho\rightarrow0}\int_{Q_{r}\setminus\left\{ \left|x-x_{0}\right|\leq\rho\right\} }\varphi\left(\partial_{t}\phi-\Delta_{p}\phi-f(D\phi)\right)\d z\\
\leq & \int_{Q_{r}}-\delta\varphi\d z.
\end{align*}
Let $l:=\min_{\partial_{p}Q_{r}}\left(u-\phi\right)>0$ and set $\widetilde{\phi}:=\phi+l$.
Then by the above inequality, $\widetilde{\phi}$ is a weak subsolution
to
\[
\partial_{t}\widetilde{\phi}-\Delta_{p}\widetilde{\phi}-f(D\widetilde{\phi})\leq-\delta\quad\text{in }Q_{r}
\]
and on $\partial_{p}Q_{r}$ it holds $\tilde{\phi}=\phi+l\leq\phi+u-\phi=u$.
Hence Theorem \ref{thm:comparison principle} implies that $\widetilde{\phi}\leq u$
in $Q_{r}$. But this is not possible since $\widetilde{\phi}(x_{0},t_{0})>u(x_{0},t_{0})$.\begin{old}\textbf{(Case
$p\geq2$)} Assume on contrary that there is a $\phi\in C^{2}(\overline{\Omega}\times[0,T])$
testing $u$ from below at $(x_{0},t_{0})\in\Omega_{T}$ and
\[
\partial_{t}\phi(x_{0},t_{0})-\Delta_{p}\phi(x_{0},t_{0})-f(D\phi(x_{0},t_{0}))<-\delta
\]
for some $\delta>0$. By continuity we have $r>0$ such that 
\[
\partial_{t}\phi-\Delta_{p}\phi-f(D\phi)<-\delta\text{ in }Q_{r}(x_{0},t_{0}).
\]
Multiplying the above inequality by $\varphi\in C_{0}^{\infty}(Q_{r}(x_{0},t_{0}))$
and integrating by parts we arrive at
\[
\int_{\Omega_{T}}\phi\partial_{t}\varphi+\left|D\phi\right|^{p-2}D\phi\cdot D\varphi-\varphi f(D\phi)\d z\leq-\int_{\Omega_{T}}\delta\varphi\d z.
\]
Let $l=\min_{\partial_{p}Q_{r}(x_{0},t_{0})}u-\phi$ and set $\tilde{\phi}=\phi+l$.
Then $\tilde{\phi}$ is a weak subsolution to 
\[
\partial_{t}\tilde{\phi}-\Delta_{p}\tilde{\phi}-f(D\tilde{\phi})\text{ in }Q_{r}(x_{0},t_{0})
\]
and on $\partial_{p}Q_{r}(x_{0},t_{0})$ it holds that $\tilde{\phi}=\phi+l\leq\phi+u-\phi=u$.
Hence Lemma \ref{lem:comparison lemma p>2} implies that $\tilde{\phi}\leq u$
in $Q_{r}(x_{0},t_{0})$. But this is not possible since $\tilde{\phi}(x_{0},t_{0})>u(x_{0},t_{0})$.\end{old}
\end{proof}

\section{Viscosity solutions are weak solutions}

We show that bounded viscosity supersolutions to (\ref{eq:p-para f})
are weak supersolutions when $1<p<\infty$ and $\smash{f\in C(\mathbb{R}^{N})}$
satisfies the growth condition (\ref{eq:gcnd}). We use the method
developed in \cite{newequivalence}. The method of \cite{newequivalence}
was previously applied to parabolic equations in \cite{ParvVaz},
but for radially symmetric solutions. 

The idea is to approximate a viscosity supersolution $u$ to (\ref{eq:p-para f})
by the \textit{inf-convolution}
\[
u_{\varepsilon}(x,t):=\inf_{(y,s)\in\Xi}\left\{ u(y,s)+\frac{\left|x-y\right|^{q}}{q\varepsilon^{q-1}}+\frac{\left|t-s\right|^{2}}{2\varepsilon}\right\} ,
\]
where $\varepsilon>0$ and $q\geq2$ is a fixed constant so large
that $\smash{p-2+\frac{q-2}{q-1}>0}$. It is straightforward to show
that the inf-convolution $\smash{u_{\varepsilon}}$ is a viscosity
supersolution in the smaller domain 
\[
\Xi_{\varepsilon}=\left\{ (x,t)\in\Xi:B_{r(\varepsilon)}(x)\times(t-t(\varepsilon),t+t(\varepsilon))\Subset\Xi\right\} ,
\]
where $r(\varepsilon),$ $t(\varepsilon)\rightarrow0$ as $\varepsilon\rightarrow0$.
Moreover, $\smash{u_{\varepsilon}}$ is semi-concave by definition
and therefore it has a second derivative almost everywhere. It follows
from these pointwise properties that $\smash{u_{\varepsilon}}$ is
a weak supersolution to (\ref{eq:p-para f}) in $\smash{\Xi_{\varepsilon}}$.
Caccioppoli type estimates then imply that $\smash{u_{\varepsilon}}$
converges to $u$ in a parabolic Sobolev space and consequently $u$
is a weak supersolution.

The standard properties of the inf-convolution are postponed to the
end of this section. Instead, we begin by proving the key observation:
that the inf-convolution of a viscosity supersolution is a weak supersolution
in the smaller domain $\smash{\Xi_{\varepsilon}}$. When $p\geq2$,
the idea is the following. Since $\smash{u_{\varepsilon}}$ is a viscosity
supersolution to (\ref{eq:p-para f}) that is twice differentiable
almost everywhere, it satisfies the equation pointwise almost everywhere.
Hence we may multiply the equation by a non-negative test function
$\varphi$ and integrate over $\smash{\Xi_{\varepsilon}}$ so that
the integral will be non-negative. Then we approximate this expression
through smooth functions $\smash{u_{\varepsilon,j}}$ defined via
the standard mollification. Since $\smash{u_{\varepsilon,j}}$ is
smooth, we may integrate by parts to reach the weak formulation of
the equation, see (\ref{eq:inf_conv is weak mid eq}). It then remains
to let $j\rightarrow\infty$ to conclude that $\smash{u_{\varepsilon}}$
is a weak supersolution. The range $1<p<2$ is more delicate because
of the singularity of the $p$-Laplace operator
\[
\Delta_{p}u:=\left|Du\right|^{p-2}\left(\Delta u+\frac{(p-2)}{\left|Du\right|^{2}}\left\langle D^{2}uDu,Du\right\rangle \right),
\]
and therefore we consider the case $p\geq2$ first.
\begin{lem}
Let $p\geq2$. Let $u$ be a bounded viscosity supersolution to (\ref{eq:p-para f})
in $\Xi$. Then $u_{\varepsilon}$ is a weak supersolution to (\ref{eq:p-para f})
in $\Xi_{\varepsilon}$.
\end{lem}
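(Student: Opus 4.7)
The plan is to follow the outline sketched in the paragraph preceding the lemma. First I would invoke the standard properties of the inf-convolution, to be established at the end of this section: $u_{\varepsilon}$ is locally Lipschitz, semi-concave, and is itself a viscosity supersolution to (\ref{eq:p-para f}) in $\Xi_{\varepsilon}$. By Alexandrov's theorem applied to semi-concave functions, $u_{\varepsilon}$ admits a second-order Taylor expansion at almost every point of $\Xi_{\varepsilon}$; in particular $\partial_{t}u_{\varepsilon}$, $Du_{\varepsilon}$, and $D^{2}u_{\varepsilon}$ exist pointwise a.e.

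Next I would upgrade the viscosity inequality into the pointwise differential inequality
\[
\partial_{t}u_{\varepsilon}-\Delta_{p}u_{\varepsilon}-f(Du_{\varepsilon})\geq0 \quad \text{a.e.\ in } \Xi_{\varepsilon}.
\]
Given an Alexandrov point $(x_{0},t_{0})$, use the second-order Taylor polynomial $P$ of $u_{\varepsilon}$ at $(x_{0},t_{0})$, perturbed by $-\eta(|x-x_{0}|^{4}+(t-t_{0})^{2})$, as a candidate test function. For $\eta>0$ the perturbed polynomial touches $u_{\varepsilon}$ strictly from below at $(x_{0},t_{0})$ and has nonvanishing spatial gradient away from $\{x=x_{0}\}$, so the viscosity supersolution property applies; letting $\eta\rightarrow0$ yields the pointwise inequality. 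The case $Du_{\varepsilon}(x_{0},t_{0})=0$ is handled using that for $p\geq2$ the operator $\Delta_{p}\varphi$ extends continuously to zero gradient with value $0$, so the limsup in the definition of viscosity supersolution passes cleanly.

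With the pointwise inequality in hand, take non-negative $\varphi\in C_{0}^{\infty}(\Xi_{\varepsilon})$, multiply and integrate to obtain $\int_{\Xi_{\varepsilon}}\varphi(\partial_{t}u_{\varepsilon}-\Delta_{p}u_{\varepsilon}-f(Du_{\varepsilon}))\d z\geq0$. To convert this into the weak form, mollify in space-time: set $u_{\varepsilon,j}:=u_{\varepsilon}*\rho_{j}$. Smoothness of $u_{\varepsilon,j}$ lets us integrate by parts to produce the identity
\begin{align*}
&\int_{\Xi_{\varepsilon}}\varphi(\partial_{t}u_{\varepsilon,j}-\Delta_{p}u_{\varepsilon,j}-f(Du_{\varepsilon,j}))\d z \\
&\qquad=\int_{\Xi_{\varepsilon}}-u_{\varepsilon,j}\partial_{t}\varphi+|Du_{\varepsilon,j}|^{p-2}Du_{\varepsilon,j}\cdot D\varphi-\varphi f(Du_{\varepsilon,j})\d z,
\end{align*}
and the right-hand side converges as $j\rightarrow\infty$ to the weak formulation integral for $u_{\varepsilon}$ by dominated convergence; this is allowed because $u_{\varepsilon}$ is Lipschitz (so $Du_{\varepsilon,j}$ is uniformly bounded on $\supp\varphi$) and both $f$ and $\xi\mapsto|\xi|^{p-2}\xi$ are continuous for $p\geq2$.

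The decisive step is bounding the left-hand side from below in the limit. The key leverage is semi-concavity: $D^{2}u_{\varepsilon}\leq C I$ in the distributional sense, a one-sided bound inherited by the mollifications $D^{2}u_{\varepsilon,j}$; consequently $\Delta_{p}u_{\varepsilon,j}$ is bounded above uniformly in $j$ on $\supp\varphi$ (through the bounded factor $|Du_{\varepsilon,j}|^{p-2}$). Hence $X_{j}:=\partial_{t}u_{\varepsilon,j}-\Delta_{p}u_{\varepsilon,j}-f(Du_{\varepsilon,j})$ is bounded below by an integrable function on $\supp\varphi$. At every Alexandrov point of $u_{\varepsilon}$ the standard pointwise convergence of mollifications at Lebesgue points gives $X_{j}\rightarrow\partial_{t}u_{\varepsilon}-\Delta_{p}u_{\varepsilon}-f(Du_{\varepsilon})$, so Fatou's lemma yields
\[
\liminf_{j\rightarrow\infty}\int_{\Xi_{\varepsilon}}\varphi X_{j}\d z\geq\int_{\Xi_{\varepsilon}}\varphi(\partial_{t}u_{\varepsilon}-\Delta_{p}u_{\varepsilon}-f(Du_{\varepsilon}))\d z\geq0.
\]
Combined with the convergence of the identity's right-hand side, this gives the weak supersolution inequality for $u_{\varepsilon}$. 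I expect the one-sided limit passage for the $\Delta_{p}$ term to be the main obstacle, and semi-concavity is precisely what makes it tractable.
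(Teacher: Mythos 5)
Your overall route is the same as the paper's: use the inf-convolution's Lipschitz continuity, semi-concavity and viscosity supersolution property, reduce to the a.e.\ pointwise inequality $\partial_{t}u_{\varepsilon}-\Delta_{p}u_{\varepsilon}-f(Du_{\varepsilon})\geq0$, then mollify, integrate by parts, and pass to the limit with dominated convergence on the right (uniform Lipschitz bounds) and Fatou on the left (the one-sided Hessian bound $D^{2}u_{\varepsilon,j}\leq CI$ inherited from semi-concavity). That limit passage is essentially the paper's argument; the only caveat there is that the a.e.\ convergence $D^{2}u_{\varepsilon,j}\rightarrow D^{2}u_{\varepsilon}$ is not plain Lebesgue-point convergence of mollifications but the fact, recorded in Remark \ref{rem:inf_remark} from the proof of Alexandrov's theorem, that the mollified Hessians of a concave function converge a.e.\ to its Alexandrov Hessian -- same fact, only an attribution issue.

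The genuine gap is in your derivation of the pointwise inequality. At an Alexandrov point the expansion only gives $u_{\varepsilon}(y,s)=P(y,s)+o\left(\left|y-x_{0}\right|^{2}+\left|s-t_{0}\right|\right)$ with a signed error, and your perturbation $-\eta\left(\left|y-x_{0}\right|^{4}+(s-t_{0})^{2}\right)$ is of strictly higher order than this error; hence $P-\eta(\cdot)$ need not lie below $u_{\varepsilon}$ in any neighbourhood of $(x_{0},t_{0})$ (take, say, an error behaving like $-\left(\left|y-x_{0}\right|^{2}+\left|s-t_{0}\right|\right)^{5/4}$), so the claimed touching from below fails, and no perturbation with fixed powers can repair it. What is true is that the jet $\left(\partial_{t}u_{\varepsilon},Du_{\varepsilon},D^{2}u_{\varepsilon}\right)(x_{0},t_{0})$ belongs to the parabolic subjet of $u_{\varepsilon}$, and one must then invoke the standard (modulus-dependent) construction of a $C^{2}$ test function realizing a given subjet element, cf.\ \cite{userguide}, or argue as the paper does through Lemma \ref{lem:infconv is viscsup}, whose second claim settles exactly the points where $Du_{\varepsilon}=0$ by exploiting the structure of the inf-convolution. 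Relatedly, your statement that for $p\geq2$ the operator ``extends continuously to zero gradient with value $0$'' is false for $p=2$ (there $\Delta_{p}\varphi=\Delta\varphi$); at zero-gradient Alexandrov points one instead notes that property (iv) of Lemma \ref{lem:inf conv prop} forces $D^{2}u_{\varepsilon}\leq0$ there, or again uses the second claim of Lemma \ref{lem:infconv is viscsup}, so the conclusion you need is correct but your justification of it is not.
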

\begin{proof}
Fix a non-negative test function $\varphi\in C_{0}^{\infty}(\Xi_{\varepsilon})$.
By Remark \ref{rem:inf_remark}, the function
\[
\phi(x,t):=u_{\varepsilon}(x,t)-C(q,\varepsilon,u)\left(\left|x\right|^{2}+t^{2}\right)
\]
is concave in $\smash{\Xi_{\varepsilon}}$ and we can approximate
it by smooth concave functions $\smash{\phi_{j}}$ so that $\smash{\left(\phi_{j},\partial_{t}\phi_{j},D\phi_{j},D^{2}\phi_{j}\right)\rightarrow\left(\phi,\partial_{t}\phi,D\phi,D^{2}\phi\right)}$
a.e.$\:$in $\smash{\Xi_{\varepsilon}}$. We define
\[
u_{\varepsilon,j}(x,t):=\phi_{j}(x,t)+C(q,\varepsilon,u)\left(\left|x\right|^{2}+t^{2}\right).
\]
Since $u_{\varepsilon,j}$ is smooth and $\varphi$ is compactly supported
in $\Xi_{\varepsilon}$, we integrate by parts to get
\begin{align}
\int_{\Xi_{\varepsilon}} & \varphi\bigg(\partial_{t}u_{\varepsilon,j}-\left|Du_{\varepsilon,j}\right|^{p-2}\bigg(\Delta u_{\varepsilon,j}+\frac{(p-2)}{\left|Du_{\varepsilon,j}\right|^{2}}\left\langle D^{2}u_{\varepsilon,j}Du_{\varepsilon,j},Du_{\varepsilon,j}\right\rangle \bigg)-f(Du_{\varepsilon,j})\bigg)\d z\nonumber \\
= & \int_{\Xi_{\varepsilon}}\varphi\partial_{t}u_{\varepsilon,j}-\varphi\div\left(\left|Du_{\varepsilon,j}\right|^{p-2}Du_{\varepsilon,j}\right)-\varphi f(Du_{\varepsilon,j})\d z\nonumber \\
= & \int_{\Xi_{\varepsilon}}-u_{\varepsilon,j}\partial_{t}\varphi+\left|Du_{\varepsilon,j}\right|^{p-2}Du_{\varepsilon,j}\cdot D\varphi-\varphi f(Du_{\varepsilon,j})\d z.\label{eq:inf_conv is weak mid eq}
\end{align}
This implies that
\begin{align*}
\liminf_{j\rightarrow\infty}\!\int_{\Xi_{\varepsilon}} & \varphi\bigg(\partial_{t}u_{\varepsilon,j}-\left|Du_{\varepsilon,j}\right|^{p-2}\bigg(\Delta u_{\varepsilon,j}+\frac{(p-2)}{\left|Du_{\varepsilon,j}\right|^{2}}\left\langle D^{2}u_{\varepsilon,j}Du_{\varepsilon,j},Du_{\varepsilon,j}\right\rangle \bigg)\!-f(Du_{\varepsilon,j})\bigg)\d z\\
\leq & \lim_{j\rightarrow\infty}\int_{\Xi_{\varepsilon}}-u_{\varepsilon,j}\partial_{t}\varphi+\left|Du_{\varepsilon,j}\right|^{p-2}Du_{\varepsilon,j}\cdot D\varphi-\varphi f(Du_{\varepsilon,j})\d z.
\end{align*}
 We intend to use Fatou's lemma at the left-hand side and dominated
convergence at the right-hand side. Once we verify their assumptions,
we arrive at the inequality
\[
\int_{\Xi_{\varepsilon}}\varphi\left(\partial_{t}u_{\varepsilon}-\Delta_{p}u_{\varepsilon}-f(Du_{\varepsilon})\right)\d z\leq\int_{\Xi_{\varepsilon}}-u_{\varepsilon}\partial_{t}\varphi+\left|Du_{\varepsilon}\right|^{p-2}Du_{\varepsilon}\cdot D\varphi-\varphi f(Du_{\varepsilon})\d z.
\]
The left-hand side is non-negative since by Lemma \ref{lem:infconv is viscsup}
the inf-convolution $u_{\varepsilon}$ is still a viscosity supersolution
in $\Xi_{\varepsilon}$. Consequently $\smash{u_{\varepsilon}}$ is
a weak supersolution in $\Xi_{\varepsilon}$ as desired. It remains
to justify our use of Fatou's lemma and the dominated convergence
theorem. It follows from Remark \ref{rem:inf_remark} that $\smash{\left|u_{\varepsilon,j}\right|}$,
$\smash{\left|\partial_{t}u_{\varepsilon,j}\right|}$ and $\smash{\left|Du_{\varepsilon,j}\right|}$
are uniformly bounded by some constant $\smash{M>0}$ in the support
of $\varphi$ with respect to $j$. This justifies our use of the
dominated convergence theorem. Observe then that since $\phi_{j}$
is concave, we have $\smash{D^{2}u_{\varepsilon,j}\leq C(q,\varepsilon,u)I}$.
Hence
\begin{align*}
\partial_{t}u_{\varepsilon,j} & -\left|Du_{\varepsilon,j}\right|^{p-2}\bigg(\Delta u_{\varepsilon,j}+\frac{(p-2)}{\left|Du_{\varepsilon,j}\right|^{2}}\left\langle D^{2}u_{\varepsilon,j}Du_{\varepsilon,j},Du_{\varepsilon,j}\right\rangle \bigg)-f(Du_{\varepsilon,j})\\
\geq & -M-C(q,\varepsilon,u)M^{p-2}\left(N+p-2\right)-\sup_{\left|\xi\right|\leq M}\left|f(\xi)\right|.
\end{align*}
The integrand at the left-hand side of (\ref{eq:inf_conv is weak mid eq})
is therefore bounded from below with respect to $j$, justifying our
use of Fatou's lemma.
\end{proof}
Next we consider the singular case $\smash{1<p<2}$. We cannot directly
repeat the previous proof because $\smash{\Delta_{p}u_{\varepsilon}}$
no longer has a clear meaning at the points where $\smash{Du_{\varepsilon}=0}$.
To deal with this, we consider the regularized terms
\begin{equation}
\Delta_{p,\delta}u:=\left(\delta+\left|Du\right|^{2}\right)^{\frac{p-2}{2}}\bigg(\Delta u+\frac{p-2}{\delta+\left|Du\right|^{2}}\Delta_{\infty}u\bigg),\label{eq:regplap}
\end{equation}
where $\Delta_{\infty}u=\left\langle D^{2}uDu,Du\right\rangle .$
\begin{lem}
\label{lem:inf conv is weak}Let $1<p<2$ . Let $u$ be a bounded
viscosity supersolution to (\ref{eq:p-para f}) in $\Xi$. Then $u_{\varepsilon}$
is a weak supersolution to (\ref{eq:p-para f}) in $\Xi_{\varepsilon}$.
\end{lem}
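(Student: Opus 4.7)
The plan is to mirror the proof of the $p \geq 2$ case, replacing $\Delta_p$ throughout with the regularized operator $\Delta_{p,\delta}$ from \eqref{eq:regplap}, and then to pass $\delta \to 0$ at the very end. Accordingly, the argument involves two successive limits: first in the smoothing parameter $j$, then in the regularization $\delta$.

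As in the previous lemma, approximate $u_\varepsilon$ by smooth functions $u_{\varepsilon,j}$ via mollification of its concave part. Since $\Delta_{p,\delta} v = \div\bigl((\delta + |Dv|^2)^{(p-2)/2} Dv\bigr)$, integration by parts against any non-negative $\varphi \in C_0^\infty(\Xi_\varepsilon)$ gives
\[
\int_{\Xi_\varepsilon}\varphi\bigl(\partial_t u_{\varepsilon,j} - \Delta_{p,\delta} u_{\varepsilon,j} - f(Du_{\varepsilon,j})\bigr)\d z = \int_{\Xi_\varepsilon}-u_{\varepsilon,j}\partial_t\varphi + (\delta+|Du_{\varepsilon,j}|^2)^{\frac{p-2}{2}} Du_{\varepsilon,j}\cdot D\varphi - \varphi f(Du_{\varepsilon,j})\d z.
\]
I would then pass $j \to \infty$: dominated convergence handles the right-hand side cleanly since $|Du_{\varepsilon,j}| \leq M$ and $(\delta + M^2)^{(p-2)/2}$ is bounded for fixed $\delta > 0$, while Fatou's lemma handles the left-hand side once a uniform-in-$j$ lower bound on the integrand is established.

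The resulting integral inequality for $u_\varepsilon$ at level $\delta$ should have non-negative left-hand side: at every Alexandrov point of $u_\varepsilon$ where $Du_\varepsilon \neq 0$, the viscosity supersolution property (applied via a paraboloid touching $u$ from below at the inf-convolution minimizer, whose gradient is non-zero near the touching point) yields the pointwise inequality $\partial_t u_\varepsilon - \Delta_p u_\varepsilon - f(Du_\varepsilon) \geq 0$; on $\{Du_\varepsilon = 0\}$ the Alexandrov Hessian vanishes a.e.\ by semi-concavity, so the integrand is controlled there as well. Finally, pass $\delta \to 0$ using $\Delta_{p,\delta} u_\varepsilon \to \Delta_p u_\varepsilon$ a.e.\ on $\{Du_\varepsilon \neq 0\}$ and dominated convergence on the right (the flux is dominated by $C|Du_\varepsilon|^{p-1}$, which is essentially bounded), recovering the weak formulation for $u_\varepsilon$.

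The hardest part is the uniform-in-$j$ lower bound needed for Fatou. Unlike the $p \geq 2$ case, the concavity bound $D^2 u_{\varepsilon,j} \leq C(q,\varepsilon,u) I$ does not yield an upper bound on $\Delta_{p,\delta} u_{\varepsilon,j}$, since the term $(p-2)\Delta_\infty u_{\varepsilon,j}/(\delta + |Du_{\varepsilon,j}|^2)$ can be large and positive when $\Delta_\infty u_{\varepsilon,j}$ is very negative (which semi-concavity alone permits). Overcoming this will require a more careful argument, perhaps splitting the support of $\varphi$ according to the size of $|Du_{\varepsilon,j}|$ and exploiting cancellations in the definition of $\Delta_{p,\delta}$, or combining the a.e.\ convergence of Alexandrov derivatives with an equi-integrability estimate for the \emph{divergence form} flux rather than relying on pointwise bounds for the non-divergence form.
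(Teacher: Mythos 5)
Your two-limit architecture (mollify in $j$ with $\delta$ fixed, then let $\delta\to0$) is exactly the paper's, but the proposal is incomplete precisely at the two places where $1<p<2$ differs from $p\geq2$, and at the first of these your diagnosis is mistaken. The uniform-in-$j$ lower bound needed for Fatou \emph{does} follow from semi-concavity: since $p>1$, the matrix $A_\delta:=I+(p-2)\frac{Du_{\varepsilon,j}\otimes Du_{\varepsilon,j}}{\delta+\left|Du_{\varepsilon,j}\right|^{2}}$ is positive semidefinite (its eigenvalues are $1$ and $1+(p-2)\frac{\left|Du_{\varepsilon,j}\right|^{2}}{\delta+\left|Du_{\varepsilon,j}\right|^{2}}\geq p-1>0$), so $D^{2}u_{\varepsilon,j}\leq C(q,\varepsilon,u)I$ gives $\Delta u_{\varepsilon,j}+\frac{p-2}{\delta+\left|Du_{\varepsilon,j}\right|^{2}}\Delta_{\infty}u_{\varepsilon,j}=\operatorname{tr}\left(A_{\delta}D^{2}u_{\varepsilon,j}\right)\leq C(q,\varepsilon,u)\operatorname{tr}A_{\delta}\leq C(q,\varepsilon,u)N$. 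Combined with $\left(\delta+\left|Du_{\varepsilon,j}\right|^{2}\right)^{\frac{p-2}{2}}\leq\delta^{\frac{p-2}{2}}$ and the uniform bounds on $\partial_{t}u_{\varepsilon,j}$ and $Du_{\varepsilon,j}$, this yields a lower bound independent of $j$ (it degenerates as $\delta\to0$, but $\delta$ is fixed at this stage, which is all Fatou needs). The cancellation you declare impossible is exactly the one the paper exploits, writing the operator as $\frac{(\delta+\left|Du\right|^{2})^{\frac{p-2}{2}}}{\delta+\left|Du\right|^{2}}\big(\left|Du\right|^{2}\big(\Delta u+\frac{p-2}{\left|Du\right|^{2}}\Delta_{\infty}u\big)+\delta\Delta u\big)$ and bounding each bracket by semi-concavity; your proposed substitutes (splitting by gradient size, equi-integrability of the divergence-form flux) are neither needed nor carried out, so the central estimate of the proof is simply missing.

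The second gap is the sign of the limiting integrand on $\left\{ Du_{\varepsilon}=0\right\} $. That the Alexandrov Hessian is $\leq0$ (or vanishes) a.e.\ there only makes the second-order term harmless; after $\delta\to0$ the integrand on that set is $\varphi\left(\partial_{t}u_{\varepsilon}-f(0)\right)$, and you still must prove $\partial_{t}u_{\varepsilon}\geq f(0)$ a.e.\ on $\left\{ Du_{\varepsilon}=0\right\} $. This is the second claim of Lemma \ref{lem:infconv is viscsup}, and it requires its own viscosity argument: touch $u$ from below at $(x,t_{\varepsilon})$ by the translated inf-convolution kernel and use that $q>p/(p-1)$ forces $\Delta_{p}$ of that kernel to vanish in the limit, leaving only the time derivative; your sketch never establishes this. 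Relatedly, your claim that the left-hand side is non-negative \emph{at each fixed} $\delta$ does not follow from the pointwise inequality $\partial_{t}u_{\varepsilon}-\Delta_{p}u_{\varepsilon}-f(Du_{\varepsilon})\geq0$ on $\left\{ Du_{\varepsilon}\not=0\right\} $, since $\Delta_{p,\delta}u_{\varepsilon}$ and $\Delta_{p}u_{\varepsilon}$ are not comparable in sign; the correct route (the paper's) is to take $\liminf_{\delta\rightarrow0}$ of the left-hand side, justify Fatou in $\delta$ via the Hessian bound of Lemma \ref{lem:inf conv prop} together with $p-2+\frac{q-2}{q-1}>0$, and only then invoke the pointwise viscosity information on the two sets. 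Until these two points are supplied, the proposal is a plan rather than a proof.
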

\begin{proof}
\textbf{(Step 1) }Let $\varphi\in C_{0}^{\infty}(\Xi_{\varepsilon})$
be a non-negative test function. We set 
\begin{equation}
\phi(x,t):=u_{\varepsilon}(x,t)-C(q,\varepsilon,u)\left(\left|x\right|^{2}+t^{2}\right),\label{eq:inf conv is weak concavity p<2}
\end{equation}
where $\smash{C(q,\varepsilon,u)}$ is the semi-concavity constant
of $\smash{u_{\varepsilon}}$ in $\smash{\Xi_{\varepsilon}}$. Then
by Remark \ref{rem:inf_remark} we can approximate $\phi$ by smooth
concave functions $\smash{\phi_{j}}$ so that $\smash{\left(\phi_{j},\partial_{t}\phi_{j},D\phi_{j},D^{2}\phi_{j}\right)}\rightarrow\smash{\left(\phi,\partial_{t}\phi,D\phi,D^{2}\phi\right)}$
a.e.\ in $\Xi_{\varepsilon}$. We define
\[
u_{\varepsilon,j}(x,t):=\phi_{j}(x,t)+C(q,\varepsilon,u)\left(\left|x\right|^{2}+t^{2}\right).
\]
 Let $\delta\in(0,1)$. Since $u_{\varepsilon,j}$ is smooth and $\varphi$
is compactly supported in $\Xi_{\varepsilon}$, we calculate via integration
by parts
\begin{align*}
\int_{\Xi_{\varepsilon}} & \varphi\bigg(\partial_{t}u_{\varepsilon,j}-\left(\delta+\left|Du_{\varepsilon,j}\right|^{2}\right)^{\frac{p-2}{2}}\bigg(\Delta u_{\varepsilon,j}+\frac{p-2}{\delta+\left|Du_{\varepsilon,j}\right|^{2}}\Delta_{\infty}u_{\varepsilon,j}\bigg)-f(Du_{\varepsilon,j})\bigg)\d z\\
= & \int_{\Xi_{\varepsilon}}\varphi\partial_{t}u_{\varepsilon,j}-\varphi\div\left(\left(\delta+\left|Du_{\varepsilon,j}\right|^{2}\right)^{\frac{p-2}{2}}Du_{\varepsilon,j}\right)-\varphi f(Du_{\varepsilon,j})\d z\\
= & \int_{\Xi_{\varepsilon}}-u_{\varepsilon,j}\partial_{t}\varphi+\left(\delta+\left|Du_{\varepsilon,j}\right|^{2}\right)^{\frac{p-2}{2}}Du_{\varepsilon,j}\cdot D\varphi-\varphi f(Du_{\varepsilon,j})\d z.
\end{align*}
Recalling the shorthand $\Delta_{p,\delta}$ defined in (\ref{eq:regplap}),
we deduce from the above that
\begin{align}
\liminf_{j\rightarrow\infty} & \int_{\Xi_{\varepsilon}}\varphi\left(\partial_{t}u_{\varepsilon,j}-\Delta_{p,\delta}u_{\varepsilon,j}-f(Du_{\varepsilon,j})\right)\d z\nonumber \\
\leq & \lim_{j\rightarrow\infty}\int_{\Xi_{\varepsilon}}-u_{\varepsilon,j}\partial_{t}\varphi+\left(\delta+\left|Du_{\varepsilon,j}\right|^{2}\right)^{\frac{p-2}{2}}Du_{\varepsilon,j}\cdot D\varphi-\varphi f(Du_{\varepsilon,j})\d z.\label{eq:inf conv is weak p<2}
\end{align}
We use Fatou's lemma at the left-hand side and the dominated convergence
at the right-hand side. Once we verify their assumptions, we arrive
at the auxiliary inequality
\begin{align}
\int_{\Xi_{\varepsilon}} & \varphi\left(\partial_{t}u_{\varepsilon}-\Delta_{p,\delta}u_{\varepsilon}-f(Du_{\varepsilon})\right)\d z\nonumber \\
\leq & \int_{\Xi_{\varepsilon}}-u_{\varepsilon}\partial_{t}\varphi+\left(\delta+\left|Du_{\varepsilon}\right|^{2}\right)^{\frac{p-2}{2}}Du_{\varepsilon}\cdot D\varphi-\varphi f(Du_{\varepsilon})\d z.\label{eq:inf conv is weak aux p<2}
\end{align}
Next we verify the assumptions of Fatou's lemma and the dominated
convergence theorem. By Remark \ref{rem:inf_remark}, the functions
$\smash{\left|u_{\varepsilon,j}\right|}$, $\smash{\left|\partial_{t}u_{\varepsilon,j}\right|}$
and $\smash{\left|Du_{\varepsilon,j}\right|}$ are uniformly bounded
by some constant $M>1$ in the support of $\varphi$ with respect
to $j$. Hence the assumptions of the dominated convergence theorem
are satisfied. Observe then that the concavity of $\smash{\phi_{j}}$
implies that $\smash{D^{2}u_{\varepsilon,j}\leq C(q,\varepsilon,u)I}$.
Thus the integrand at the left-hand side of (\ref{eq:inf conv is weak p<2})
has a lower bound independent of $j$ when $\smash{Du_{\varepsilon,j}=0}$.
When $\smash{Du_{\varepsilon,j}\not=0}$, we have\begingroup\allowdisplaybreaks
\begin{align*}
\partial_{t} & u_{\varepsilon,j}-\left(\delta+\left|Du_{\varepsilon,j}\right|^{2}\right)^{\frac{p-2}{2}}\bigg(\Delta u_{\varepsilon,j}+\frac{p-2}{\delta+\left|Du_{\varepsilon,j}\right|^{2}}\Delta_{\infty}u_{\varepsilon,j}\bigg)-f(Du_{\varepsilon,j})\\
= & \partial_{t}u_{\varepsilon,j}-\frac{\left(\delta+\left|Du_{\varepsilon,j}\right|^{2}\right)^{\frac{p-2}{2}}}{\delta+\left|Du_{\varepsilon,j}\right|^{2}}\bigg(\left|Du_{\varepsilon,j}\right|^{2}\bigg(\Delta u_{\varepsilon,j}+\frac{p-2}{\left|Du_{\varepsilon,j}\right|^{2}}\Delta_{\infty}u_{\varepsilon,j}\bigg)+\delta\Delta u_{\varepsilon,j}\bigg)-f(Du_{\varepsilon,j})\\
\geq & \partial_{t}u_{\varepsilon,j}-\frac{\left(\delta+\left|Du_{\varepsilon,j}\right|^{2}\right)^{\frac{p-2}{2}}}{\delta+\left|Du_{\varepsilon,j}\right|^{2}}C(q,\varepsilon,u)\left(\left|Du_{\varepsilon,j}\right|^{2}\left(N+p-2\right)+\delta N\right)-f(Du_{\varepsilon,j})\\
\geq & \partial_{t}u_{\varepsilon,j}-C(q,\varepsilon,u)\left(\delta+\left|Du_{\varepsilon,j}\right|^{2}\right)^{\frac{p-2}{2}}\left(2N+p-2\right)-f(Du_{\varepsilon,j})\\
\geq & -M-C(q,\varepsilon,u)\delta^{\frac{p-2}{2}}\left(2N+p-2\right)-\sup_{\left|\xi\right|\leq M}\left|f(\xi)\right|,
\end{align*}
so that our use of Fatou's lemma is justified.

\textbf{(Step 2)} We let $\delta\rightarrow0$ in the auxiliary inequality
(\ref{eq:inf conv is weak aux p<2}). Since $u_{\varepsilon}$ is
Lipschitz continuous, the dominated convergence theorem implies 
\begin{align}
\liminf_{\delta\rightarrow0} & \int_{\Xi_{\varepsilon}}\varphi\left(\partial_{t}u_{\varepsilon}-\Delta_{p,\delta}u_{\varepsilon}-f(Du_{\varepsilon})\right)\d z\nonumber \\
\leq & \int_{\Xi_{\varepsilon}}-u_{\varepsilon}\partial_{t}\varphi+\left|Du_{\varepsilon}\right|^{p-2}Du_{\varepsilon}\cdot D\varphi-\varphi f(Du_{\varepsilon})\d z.\label{eq:xiao1}
\end{align}
Applying Fatou's lemma (we verify assumptions at the end), we get
\begin{align}
\liminf_{\delta\rightarrow0} & \int_{\Xi_{\varepsilon}}\varphi\left(\partial_{t}u_{\varepsilon}-\Delta_{p,\delta}u_{\varepsilon}-f(Du_{\varepsilon})\right)\d z\nonumber \\
\geq & \int_{\Xi_{\varepsilon}}\liminf_{\delta\rightarrow0}\varphi\left(\partial_{t}u_{\varepsilon}-\Delta_{p,\delta}u_{\varepsilon}-f(Du_{\varepsilon})\right)\d z\nonumber \\
= & \int_{\Xi_{\varepsilon}\cap\left\{ Du_{\varepsilon}\not=0\right\} }\liminf_{\delta\rightarrow0}\varphi\left(\partial_{t}u_{\varepsilon}-\Delta_{p,\delta}u_{\varepsilon}-f(Du_{\varepsilon})\right)\d z\nonumber \\
 & +\int_{\Xi_{\varepsilon}\cap\left\{ Du_{\varepsilon}=0\right\} }\liminf_{\delta\rightarrow0}\varphi(\partial_{t}u_{\varepsilon}-\delta^{\frac{p-2}{2}}\Delta u_{\varepsilon}-f(0))\d z\nonumber \\
= & \int_{\Xi_{\varepsilon}\cap\left\{ Du_{\varepsilon}\not=0\right\} }\varphi\left(\partial_{t}u_{\varepsilon}-\Delta_{p}u_{\varepsilon}-f(Du_{\varepsilon})\right)\d z\nonumber \\
 & +\int_{\Xi_{\varepsilon}\cap\left\{ Du_{\varepsilon}=0\right\} }\varphi\left(\partial_{t}u_{\varepsilon}-f(0)\right)\d z\geq0,\label{eq:xiao2}
\end{align}
where the last inequality follows from Lemma \ref{lem:infconv is viscsup}
since $u_{\varepsilon}$ is twice differentiable almost everywhere.
Combining (\ref{eq:xiao1}) and (\ref{eq:xiao2}), we find that $u_{\varepsilon}$
is a weak supersolution in $\Xi_{\varepsilon}$. It remains to verify
the assumptions of Fatou's lemma, i.e.\ that the integrand at the
left-hand side of (\ref{eq:xiao1}) has a lower bound independent
of $\delta$. When $Du_{\varepsilon}=0$, this follows directly from
the inequality 
\[
D^{2}u_{\varepsilon}\leq\frac{q-1}{\varepsilon}\left|Du_{\varepsilon}\right|^{\frac{q-1}{q-2}}I,
\]
which holds by Lemma \ref{lem:inf conv prop}. When $Du_{\varepsilon}\not=0$,
we recall that by Lipschitz continuity $\partial_{t}u_{\varepsilon}$
and $Du_{\varepsilon}$ are uniformly bounded in $\Xi_{\varepsilon}$,
and estimate
\begin{align*}
 & -\left(\delta+\left|Du_{\varepsilon}\right|^{2}\right)^{\frac{p-2}{2}}\bigg(\Delta u_{\varepsilon}+\frac{p-2}{\delta+\left|Du_{\varepsilon}\right|^{2}}\Delta_{\infty}u_{\varepsilon}\bigg)\\
 & \ =-\frac{\left(\delta+\left|Du_{\varepsilon}\right|^{2}\right)^{\frac{p-2}{2}}}{\delta+\left|Du_{\varepsilon}\right|^{2}}\left(\left|Du_{\varepsilon}\right|^{2}\left(\Delta u_{\varepsilon}+\frac{p-2}{\left|Du_{\varepsilon}\right|^{2}}\Delta_{\infty}u_{\varepsilon}\right)+\delta\Delta u_{\varepsilon}\right)\\
 & \ \geq-\frac{\left(\delta+\left|Du_{\varepsilon}\right|^{2}\right)^{\frac{p-2}{2}}}{\delta+\left|Du_{\varepsilon}\right|^{2}}\frac{\left(q-1\right)}{\varepsilon}\left(\left|Du_{\varepsilon}\right|^{\frac{q-2}{q-1}+2}\left(N+p-2\right)+\left|Du_{\varepsilon}\right|^{\frac{q-2}{q-1}}\delta N\right)\\
 & \ \geq-\left(\delta+\left|Du_{\varepsilon}\right|^{2}\right)^{\frac{p-2}{2}}\frac{\left(q-1\right)}{\varepsilon}\left|Du_{\varepsilon}\right|^{\frac{q-2}{q-1}}\left(2N+p-2\right)\\
 & \ \geq-\left|Du_{\varepsilon}\right|^{p-2+\frac{q-2}{q-1}}\frac{\left(q-1\right)}{\varepsilon}\left(2N+p-2\right)\\
 & \ \geq-\left\Vert Du_{\varepsilon}\right\Vert _{L^{\infty}(\Xi_{\varepsilon})}^{p-2+\frac{q-2}{q-1}}\frac{\left(q-1\right)}{\varepsilon}\left(2N+p-2\right),
\end{align*}
where we used that $p-2+\frac{q-2}{q-1}>0$. Hence the assumptions
of Fatou's lemma hold.\endgroup
\end{proof}
If $\smash{u_{\varepsilon}}$ is the sequence of inf-convolutions
of a viscosity supersolution to (\ref{eq:p-para f}), then by next
Caccioppoli's inequality the sequence $\smash{Du_{\varepsilon}}$
converges weakly in $\smash{L_{loc}^{p}(\Xi)}$ up to a subsequence.
However, we need stronger convergence to pass to the limit under the
integral sign of
\[
\int_{\Xi}-\varphi\partial_{t}u_{\varepsilon}+\left|Du_{\varepsilon}\right|^{p-2}Du_{\varepsilon}\cdot D\varphi-\varphi f(Du_{\varepsilon})\d z\geq0.
\]
For this end, we show in Lemma \ref{lem:lr conv} that $\smash{Du_{\varepsilon}}$
converges in $\smash{L_{loc}^{r}(\Xi)}$ for all $1<r<p$. 
\begin{lem}[Caccioppoli's inequality]
\label{lem:Caccioppoli} Let $1<p<\infty$. Assume that $u$ is a
locally Lipschitz continuous weak supersolution to (\ref{eq:p-para f})
in $\Xi$. Then there is a constant $C=C(p,\beta,C_{f})$ such that
for any test function $\xi\in C_{0}^{\infty}(\Xi)$ we have
\[
\int_{\Xi}\xi^{p}\left|Du\right|^{p}\d z\leq C\int_{\Xi}M^{2}\partial_{t}\xi^{p}+M^{p}\left|D\xi\right|^{p}+(M^{\frac{p}{p-\beta}}+M)\xi^{p}\d z,
\]
where $M=\left\Vert u\right\Vert _{L^{\infty}(\supp\xi)}$.
\end{lem}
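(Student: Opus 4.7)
The plan is to test the weak formulation with the non-negative function $\varphi = \xi^p(2M-u)$, where $M = \left\Vert u\right\Vert_{L^\infty(\supp\xi)}$. Since $-M\leq u\leq M$ on $\supp\xi$, we have $M\leq 2M-u\leq 3M$ there, and $\varphi$ is compactly supported in $\Xi$ and belongs to $W^{1,p}(\Xi)\cap L^\infty(\Xi)$. The crucial identity is
\[
|Du|^{p-2}Du\cdot D\varphi = p\,\xi^{p-1}(2M-u)|Du|^{p-2}Du\cdot D\xi - \xi^p|Du|^p,
\]
so the $-\xi^p|Du|^p$ piece appears on the correct side of the weak inequality and, after rearrangement, produces the desired Caccioppoli bound for $\int_\Xi\xi^p|Du|^p\,dz$.

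Once $\varphi$ is inserted, the cross term $p\,\xi^{p-1}(2M-u)|Du|^{p-2}Du\cdot D\xi$ is controlled by Young's inequality to give $\epsilon\,\xi^p|Du|^p + C(\epsilon)M^p|D\xi|^p$, the first piece being absorbed into the left-hand side. For the parabolic term $\int_\Xi -u\,\partial_t\varphi\,dz$, I would expand $\partial_t\varphi = (2M-u)\partial_t\xi^p - \xi^p\partial_t u$, use $u\xi^p\partial_t u = \frac{1}{2}\xi^p\partial_t(u^2)$, and integrate by parts in time (boundary terms vanish because $\xi$ is compactly supported in $\Xi$). This produces $-\int_\Xi\bigl(2Mu-\frac{1}{2}u^2\bigr)\partial_t\xi^p\,dz$, bounded by a multiple of $M^2$ times $|\partial_t\xi^p|$ as in the statement. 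The gradient-growth term uses (\ref{eq:gcnd}) together with $\varphi\leq 3M\xi^p$: the contribution $3MC_f\,\xi^p|Du|^\beta$ is split by Young's inequality with conjugate exponents $p/\beta$ and $p/(p-\beta)$, valid since $\beta<p$, giving $\epsilon\,\xi^p|Du|^p + C(\epsilon)M^{p/(p-\beta)}\xi^p$, while the remaining $3MC_f\,\xi^p$ piece produces the $M\xi^p$ term.

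The main technical obstacle is that $\varphi = \xi^p(2M-u)$ is not a classical $C_0^\infty$ test function. I would resolve this by invoking Lemma \ref{lem:time conv lemma}: since $\varphi\in W^{1,p}\cap L^\infty$ has compact support in $\Xi$, the time-mollified weak equation holds against $\varphi$, and the local Lipschitz regularity of $u$ ensures that all terms pass to the limit cleanly as the mollification parameter tends to zero (so that the formal integration by parts in time performed above is rigorous). Collecting the estimates and choosing $\epsilon$ small enough to absorb every $\xi^p|Du|^p$ contribution into the left-hand side then yields the Caccioppoli inequality with the advertised constants.
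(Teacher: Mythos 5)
Your proposal is correct and follows essentially the same route as the paper: the paper tests with $\varphi=(M-u)\xi^p$ (yours uses $(2M-u)\xi^p$, an immaterial difference), absorbs the cross term and the growth term $C_f(1+|Du|^\beta)$ via Young's inequality with exponents $p/\beta$ and $p/(p-\beta)$, and handles the time term through $u\xi^p\partial_t u=\tfrac12\xi^p\partial_t(u^2)$ and integration by parts. Your extra care in justifying the non-smooth test function via Lemma \ref{lem:time conv lemma} is consistent with the paper, which simply invokes the local Lipschitz continuity of $u$ for admissibility.
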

\begin{proof}
Since $u$ is locally Lipschitz continuous, the function $\varphi:=\left(M-u\right)\xi^{p}$
is an admissible test function. Testing the weak formulation of (\ref{eq:p-para f})
with $\varphi$ yields
\begin{align}
\int_{\Xi}\xi^{p}\left|Du\right|^{p}\d z\leq & \int_{\Xi}u\partial_{t}\varphi+p\xi^{p-1}(M-u)\left|Du\right|^{p-1}\left|D\xi\right|+\varphi f(Du)\d z.\label{eq:caccioppoli 1}
\end{align}
We have by integration by parts
\begin{align*}
\int_{\Xi}u\partial_{t}\varphi\d z= & \int_{\Xi}-\xi^{p}u\partial_{t}u+u(M-u)\partial_{t}\xi^{p}\d z\\
= & \int_{\Xi}-\frac{1}{2}\xi^{p}\partial_{t}u^{2}+u(M-u)\partial_{t}\xi^{p}\d z\\
= & \int_{\Xi}\frac{1}{2}u^{2}\partial_{t}\xi^{p}+u(M-u)\partial_{t}\xi^{p}\d z\leq\int_{\Xi}CM^{2}\partial_{t}\xi^{p}\d z.
\end{align*}
By Young's inequality

\begin{align*}
\int_{\Xi} & p\xi^{p-1}(M-u)\left|Du\right|^{p-1}\left|D\xi\right|\d z\leq\int_{\Xi}\frac{1}{4}\xi^{p}\left|Du\right|^{p}\d z+C(p)\int_{\Xi}M^{p}\left|D\xi\right|^{p}\d z.
\end{align*}
Using the growth condition (\ref{eq:gcnd}) and Young's inequality
we get
\begin{align*}
\int_{\Xi}\varphi f(Du)\d z\leq & \int_{\Xi}\left(M-u\right)\xi^{p}C_{f}\left(1+\left|Du\right|^{\beta}\right)\d z\\
= & \int_{\Xi}C_{f}\left(M-u\right)\xi^{p-\beta}\xi^{\beta}\left|Du\right|^{\beta}+C_{f}(M-u)\xi^{p}\d z\\
\leq & \int_{\Xi}\frac{1}{4}\xi^{p}\left|Du\right|^{p}+C(p,\beta,C_{f})\left(M-u\right)^{\frac{p}{p-\beta}}\xi^{p}+C_{f}\left(M-u\right)\xi^{p}\d z\\
\leq & \int_{\Xi}\frac{1}{4}\xi^{p}\left|Du\right|^{p}+C(p,\beta,C_{f})\left(M^{\frac{p}{p-\beta}}+M\right)\xi^{p}\d z.
\end{align*}
Combining these estimates with (\ref{eq:caccioppoli 1}) and absorbing
the terms with $Du$ to the left-hand side yields the desired inequality.
\end{proof}
The proof of Lemma \ref{lem:lr conv} is based on that of Lemma 5
in \cite{lindqvistManfredi07}, see also Theorem 5.3 in \cite{korteKuusiParv10}.
For the convenience of the reader, we give the full details.
\begin{lem}
\label{lem:lr conv} Let $\smash{1<p<\infty}$. Suppose that $\smash{\left(u_{j}\right)}$
is a sequence of locally Lipschitz continuous weak supersolutions
to (\ref{eq:p-para f}) such that $\smash{u_{j}\rightarrow u}$ locally
uniformly in $\Xi$. Then $\smash{\left(Du_{j}\right)}$ is a Cauchy
sequence in $\smash{L_{loc}^{r}(\Xi)}$ for any $\smash{1<r<p}$.
\end{lem}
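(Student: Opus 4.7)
The plan is to adapt the argument of Lemma 5 in \cite{lindqvistManfredi07}. I first apply Caccioppoli's inequality (Lemma \ref{lem:Caccioppoli}) to each $u_{j}$: the local uniform convergence $u_{j}\to u$ yields a uniform $L^{\infty}$ bound on $\supp\xi$ for any cutoff $\xi\in C_{0}^{\infty}(\Xi)$, which in turn produces a uniform bound on $\|Du_{j}\|_{L^{p}(\supp\xi)}$. By weak compactness we may extract a subsequence $Du_{j_{\ell}}\rightharpoonup V$ in $L_{loc}^{p}(\Xi)$, and since $u_{j_{\ell}}\to u$ uniformly on compact sets, the limit must be the distributional gradient $Du$; thus $u\in L_{loc}^{p}(t_{1},t_{2};W^{1,p}(\Omega))$, and, passing to subsequences where needed, $Du_{j}\rightharpoonup Du$ weakly.

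The heart of the proof is the monotonicity estimate
\[
\int_{\Xi}\xi^{p}\bigl(|Du_{j}|^{p-2}Du_{j}-|Du_{k}|^{p-2}Du_{k}\bigr)\cdot(Du_{j}-Du_{k})\d z\longrightarrow 0\quad\text{as }j,k\to\infty.
\]
Because each $u_{j}$ is only a supersolution, we cannot simply test with $\xi^{p}(u_{j}-u_{k})$. Instead, I would time-mollify both weak formulations via Lemma \ref{lem:time conv lemma} and test the $j$-th inequality with the nonnegative function $\varphi_{j}^{\epsilon}:=\xi^{p}(L+u_{k}^{\epsilon}-u_{j}^{\epsilon})$ and the $k$-th with $\varphi_{k}^{\epsilon}:=\xi^{p}(L+u_{j}^{\epsilon}-u_{k}^{\epsilon})$, where $L>2\sup_{j}\|u_{j}\|_{L^{\infty}(\supp\xi)}$. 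Adding the two resulting inequalities, the $\xi^{p}D(u_{j}-u_{k})$ contributions combine to produce precisely the monotonicity integral above, while the $L$-terms collapse to boundary-type pieces carrying $D\xi$, the time-derivative pieces reduce (after integration by parts in time) to $\partial_{t}\xi^{p}$ paired with $(u_{j}-u_{k})^{2}/2$ plus lower-order contributions linear in $u_{j}-u_{k}$, and the $f$-terms are handled via the growth condition (\ref{eq:gcnd}) together with the uniform $L^{p}$-bound on $Du_{j}$ and $\beta<p$. Each of these error contributions is bounded by a $(j,k)$-independent constant times $\|u_{j}-u_{k}\|_{L^{\infty}(\supp\xi)}^{\alpha}$ for some $\alpha>0$, so they vanish as $j,k\to\infty$ once we first send $\epsilon\to0$.

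Once the monotonicity estimate is established, the conclusion follows from the vector inequalities (\ref{eq:algebraic ineq p>2}) and (\ref{eq:algebraic ineq 1<p<2}). When $p\geq 2$ one obtains Cauchy convergence of $Du_{j}$ even in $L_{loc}^{p}(\Xi)$ directly. When $1<p<2$ one writes
\[
|Du_{j}-Du_{k}|^{r}=\Bigl(\frac{|Du_{j}-Du_{k}|^{2}}{(1+|Du_{j}|^{2}+|Du_{k}|^{2})^{(2-p)/2}}\Bigr)^{r/2}\bigl(1+|Du_{j}|^{2}+|Du_{k}|^{2}\bigr)^{r(2-p)/4},
\]
applies H\"older's inequality with conjugate exponents $2/r$ and $2/(2-r)$, and uses the uniform $L^{p}$-bound on the gradients (via Caccioppoli) to obtain the Cauchy property in $L_{loc}^{r}(\Xi)$ for every $1<r<p$. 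The principal obstacle is the test-function step: the nonnegativity restriction for supersolutions forces the introduction of the auxiliary constant $L$, and the somewhat delicate bookkeeping is the verification that the extra $L$-dependent terms genuinely vanish in the limit, which relies crucially on the local uniform convergence $u_{j}\to u$ combined with the uniform Caccioppoli bound.
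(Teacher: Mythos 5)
There is a genuine gap in the central step. When you add the two tested inequalities with $\varphi_{j}=\xi^{p}(L+u_{k}-u_{j})$ and $\varphi_{k}=\xi^{p}(L+u_{j}-u_{k})$ and a \emph{fixed} constant $L>2\sup_{j}\|u_{j}\|_{L^{\infty}(\supp\xi)}$, the $L$-parts of the right-hand side do not ``collapse'' and are not controlled by any power of $\|u_{j}-u_{k}\|_{L^{\infty}}$. Splitting $\varphi_{j}=L\xi^{p}+\xi^{p}(u_{k}-u_{j})$, the $L$-contributions are exactly
\[
L\int_{\Xi}-u_{j}\partial_{t}\xi^{p}+\left|Du_{j}\right|^{p-2}Du_{j}\cdot D\xi^{p}-\xi^{p}f(Du_{j})\d z\;+\;L\int_{\Xi}\text{(same with }u_{k}\text{)},
\]
i.e.\ $L$ times the supersolution pairing of $u_{j}$ (resp.\ $u_{k}$) against the test function $\xi^{p}$. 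These quantities carry no factor of $u_{j}-u_{k}$; they are nonnegative, sit on the side that is supposed to be small, and for a genuine (say strict) supersolution they are bounded below by a positive constant. They vanish identically only if the $u_{j}$ are weak \emph{solutions}. So your scheme yields only $\int\xi^{p}\bigl(|Du_{j}|^{p-2}Du_{j}-|Du_{k}|^{p-2}Du_{k}\bigr)\cdot(Du_{j}-Du_{k})\d z\leq O(L)$, not $o(1)$, and the claimed monotonicity estimate does not follow.

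Two repairs are possible. The cheap one within your framework is to make the additive constant small instead of large: take $L=L_{jk}:=\|u_{j}-u_{k}\|_{L^{\infty}(\supp\xi)}$, which still makes the test functions nonnegative, and then every error term (the $L$-terms, the $D\xi$-terms, the $\partial_{t}\xi^{p}$-terms and the $f$-terms) indeed carries a factor tending to $0$ as $j,k\to\infty$ thanks to the local uniform convergence and the uniform Caccioppoli bound; this gives the full monotonicity estimate and then your final H\"older/vector-inequality step works. The paper instead avoids the issue by truncating: it tests with $(\delta\mp w_{jk})\theta$, where $w_{jk}$ is $u_{j}-u_{k}$ cut at level $\pm\delta$, so the ``constant'' is $\delta$ and all error terms are $O(\delta)$; this only controls the monotonicity integrand on $\{|u_{j}-u_{k}|<\delta\}$, and the complementary set is handled by Tchebysheff's inequality together with the uniform $L^{p}$ gradient bound, after which one sends $\delta\to0$ first and then $j,k\to\infty$. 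Either route is fine; as written, however, your argument does not close.
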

\begin{proof}
Let $U\Subset\Xi$ and take a cut-off function $\theta\in C_{0}^{\infty}(\Xi)$
such that $0\leq\theta\leq1$ and $\theta\equiv1$ in $U$. For $\delta>0$,
we set
\[
w_{jk}=\begin{cases}
\delta, & u_{j}-u_{k}>\delta,\\
u_{j}-u_{k}, & \left|u_{j}-u_{k}\right|\leq\delta,\\
-\delta, & u_{j}-u_{k}<-\delta.
\end{cases}
\]
Then the function $(\delta-w_{jk})\theta$ is an admissible test function
with a time derivative since it is Lipschitz continuous. Since $u_{j}$
is a weak supersolution, testing the weak formulation of (\ref{eq:p-para f})
with $(\delta-w_{jk})\theta$ yields
\begin{align*}
0\leq & \int_{\Xi}-u_{j}\partial_{t}((\delta-w_{jk})\theta)+\left|Du_{j}\right|^{p-2}Du_{j}\cdot D((\delta-w_{jk})\theta)-(\delta-w_{jk})\theta f(Du_{j})\d z\\
= & \int_{\Xi}-\theta\left|Du_{j}\right|^{p-2}Du_{j}\cdot Dw_{jk}+(\delta-w_{jk})\left|Du_{j}\right|^{p-2}Du_{j}\cdot D\theta-(\delta-w_{jk})\theta f(Du_{j})\\
 & \ \ \ \ +u_{j}\partial_{t}(w_{jk}\theta)-(\delta-w_{jk})u_{j}\partial_{t}\theta\d z.
\end{align*}
Since $\left|w_{jk}\right|\leq\delta$ and $Dw_{jk}=\chi_{\left\{ \left|u_{j}-u_{k}\right|<\delta\right\} }\left(Du_{j}-Du_{k}\right)$,
the above becomes
\begin{align*}
\int_{\left\{ \left|u_{j}-u_{k}\right|<\delta\right\} } & \theta\left|Du_{j}\right|^{p-2}Du_{j}\cdot\left(Du_{j}-Du_{k}\right)\d z\\
\leq & \int_{\Xi}2\delta\left|Du_{j}\right|^{p-1}\left|D\theta\right|+2\delta\theta\left|f(Du_{j})\right|+u_{j}\partial_{t}(w_{jk}\theta)+2\delta\left|u_{j}\right|\left|\partial_{t}\theta\right|\d z.
\end{align*}
Since $u_{k}$ is a weak supersolution, the same arguments as above
but testing this time with $(\delta+w_{jk})\theta$ yield the analogous
estimate
\begin{align*}
\int_{\left\{ \left|u_{j}-u_{k}\right|<\delta\right\} } & -\theta\left|Du_{k}\right|^{p-2}Du_{k}\cdot\left(Du_{j}-Du_{k}\right)\d z\\
\leq & \int_{\Xi}2\delta\left|Du_{k}\right|^{p-1}\left|D\theta\right|+2\delta\theta\left|f(Du_{k})\right|-u_{k}\partial_{t}\left(w_{jk}\theta\right)+2\delta\left|u_{k}\right|\left|\partial_{t}\theta\right|\d z.
\end{align*}
Summing up these two inequalities we arrive at
\begin{align}
\int_{\left\{ \left|u_{j}-u_{k}\right|<\delta\right\} } & \theta\left(\left|Du_{j}\right|^{p-2}Du_{j}-\left|Du_{k}\right|^{p-2}Du_{k}\right)\cdot\left(Du_{j}-Du_{k}\right)\d z\nonumber \\
\leq & 2\delta\int_{\Xi}\left|D\theta\right|\left(\left|Du_{j}\right|^{p-1}+\left|Du_{k}\right|^{p-1}\right)\d z+2\delta\int_{\Xi}\theta\left(\left|f(Du_{j})\right|+\left|f(Du_{k})\right|\right)\d z\nonumber \\
 & +\int_{\Xi}(u_{j}-u_{k})\partial_{t}\left(w_{jk}\theta\right)\d z+2\delta\int_{\Xi}\left(\left|u_{j}\right|+\left|u_{k}\right|\right)\left|\partial_{t}\theta\right|\d z\nonumber \\
=: & I_{1}+I_{2}+I_{3}+I_{4}.\label{eq:lr conv 1}
\end{align}
We proceed to estimate these integrals. Denoting $\smash{M:=\sup_{j}\left\Vert u_{j}\right\Vert _{L^{\infty}(\supp\theta)}<\infty}$,
we have by the Caccioppoli's inequality Lemma \ref{lem:Caccioppoli}
\begin{equation}
\sup_{j}\int_{\supp\theta}\left|Du_{j}\right|^{p}\d z\leq C(p,\beta,C_{f},\theta,M).\label{eq:lr conv 2}
\end{equation}
The estimate (\ref{eq:lr conv 2}) and Hölder's inequality imply that
\[
I_{1}\leq\delta C(p,\beta,C_{f},\theta,M).
\]
To estimate $I_{2}$, we also use the growth condition (\ref{eq:gcnd})
and the assumption $\beta<p$. We get
\[
I_{2}\leq2\delta\int_{\Xi}\theta C_{f}(2+\left|Du_{j}\right|^{\beta}+\left|Du_{k}\right|^{\beta})\d z\leq\delta C(p,\beta,C_{f},\theta,M).
\]
The integral $I_{3}$ is estimated using integration by parts and
that $\left|w_{jk}\right|\leq\delta$ 
\begin{align*}
I_{3}= & \int_{\Xi}\theta(u_{j}-u_{k})\partial_{t}\left(w_{jk}\right)+\left(u_{j}-u_{k}\right)w_{jk}\partial_{t}\theta\d z=\int_{\Xi}\frac{1}{2}\theta\partial_{t}w_{jk}^{2}+\left(u_{j}-u_{k}\right)w_{jk}\partial_{t}\theta\d z\\
= & \int_{\Xi}-\frac{1}{2}w_{jk}^{2}\partial_{t}\theta+(u_{j}-u_{k})w_{jk}\partial_{t}\theta\d z\leq\delta C(\theta,M).
\end{align*}
For the last integral we have directly $I_{4}\leq\delta C(\theta,M)$.
Combining these estimates with (\ref{eq:lr conv 1}) we arrive at
\begin{equation}
\int_{\left\{ \left|u_{j}-u_{k}\right|<\delta\right\} }\theta\left(\left|Du_{j}\right|^{p-2}Du_{j}-\left|Du_{k}\right|^{p-2}Du_{k}\right)\cdot\left(Du_{j}-Du_{k}\right)\d z\leq\delta C_{0},\label{eq:lr conv 3}
\end{equation}
where $C_{0}=C(p,\beta,C_{f},\theta,M)$. If $1<p<2$, Hölder's inequality
and the algebraic inequality (\ref{eq:algebraic ineq 1<p<2}) give
the estimate (recall that $1<r<p$ and $\theta\equiv1$ in $U$)
\begin{align*}
 & \int_{U\cap\left\{ \left|u_{j}-u_{k}\right|<\delta\right\} }\left|Du_{j}-Du_{k}\right|^{r}\d z\\
 & \ \leq\bigg(\int_{U\cap\left\{ \left|u_{j}-u_{k}\right|<\delta\right\} }\left(1+\left|Du_{j}\right|^{2}+\left|Du_{k}\right|^{2}\right)^{\frac{r\left(2-p\right)}{2\left(2-r\right)}}\d z\bigg)^{\frac{2-r}{2}}\\
 & \ \ \ \ \ \ \cdot\bigg(\int_{U\cap\left\{ \left|u_{j}-u_{k}\right|<\delta\right\} }\frac{\left|Du_{j}-Du_{k}\right|^{2}}{\left(1+\left|Du_{j}\right|^{2}+\left|Du_{k}\right|^{2}\right)^{\frac{2-p}{2}}}\d z\bigg)^{\frac{r}{2}}\\
 & \ \leq C(p,\beta,r,C_{f},\theta,M)\\
 & \ \ \ \ \ \ \cdot\bigg(\int_{\left\{ \left|u_{j}-u_{k}\right|<\delta\right\} }\theta\left(\left|Du_{j}\right|^{p-2}Du_{j}-\left|Du_{k}\right|^{p-2}Du_{k}\right)\cdot\left(Du_{j}-Du_{k}\right)\d z\bigg)^{\frac{r}{2}},
\end{align*}
where in the last inequality we also used (\ref{eq:lr conv 2}) with
the knowledge $\frac{r(2-p)}{(2-r)}\leq\frac{p\left(2-p\right)}{2-p}=p.$
\\
If $p\geq2$, Hölder's inequality and the algebraic inequality (\ref{eq:algebraic ineq p>2})
imply
\begin{align*}
 & \int_{U\cap\left\{ \left|u_{j}-u_{k}\right|<\delta\right\} }\left|Du_{j}-Du_{k}\right|^{r}\d z\\
 & \ \leq\left(\int_{\Xi}1\d z\right)^{\frac{p-r}{p}}\bigg(\int_{U\cap\left\{ \left|u_{j}-u_{k}\right|<\delta\right\} }\left|Du_{j}-Du_{k}\right|^{p}\d z\bigg)^{\frac{r}{p}}\\
 & \ \leq C(p,r)\bigg(\int_{\left\{ \left|u_{j}-u_{k}\right|<\delta\right\} }\theta\left(\left|Du_{j}\right|^{p-2}Du_{j}-\left|Du_{k}\right|^{p-2}Du_{k}\right)\cdot\left(Du_{j}-Du_{k}\right)\d z\bigg)^{\frac{r}{p}}.
\end{align*}
Hence (\ref{eq:lr conv 3}) leads to
\[
\int_{U\cap\left\{ \left|u_{j}-u_{k}\right|<\delta\right\} }\left|Du_{j}-Du_{k}\right|^{r}\d z\leq\delta^{\frac{r}{\max(2,p)}}C(p,\beta,r,C_{f},\theta,M).
\]
On the other hand, Hölder's and Tchebysheff's inequalities with (\ref{eq:lr conv 2})
imply
\begin{align*}
 & \int_{U\cap\left\{ \left|u_{j}-u_{k}\right|\geq\delta\right\} }\left|Du_{j}-Du_{k}\right|^{r}\d z\\
 & \ \leq\left|U\cap\left\{ \left|u_{j}-u_{k}\right|\geq\delta\right\} \right|^{\frac{p-r}{p}}\bigg(\int_{U\cap\left\{ \left|u_{j}-u_{k}\right|\geq\delta\right\} }\left|Du_{j}-Du_{k}\right|^{p}\d z\bigg)^{\frac{r}{p}}\\
 & \ \leq\delta^{r-p}\left\Vert u_{j}-u_{k}\right\Vert _{L^{p}(U)}^{p-r}C(p,\beta,r,C_{f},\theta,M).
\end{align*}
So we arrive at
\[
\int_{U}\left|Du_{j}-Du_{k}\right|^{r}\d z\leq(\delta^{\frac{r}{\max(2,p)}}+\delta^{r-p}\left\Vert u_{j}-u_{k}\right\Vert _{L^{p}(U)}^{p-r})C(p,\beta,r,C_{f},\theta,M).
\]
Taking first small $\delta>0$ and then large $j,k$, we can make
the right-hand side arbitrarily small.
\end{proof}
Now we are ready to prove the main result of this section which states
that bounded viscosity supersolutions are weak supersolutions.
\begin{thm}
Let $1<p<\infty$. Let $u$ be a bounded viscosity supersolution to
(\ref{eq:p-para f}) in $\Xi$. Then $u$ is a weak supersolution
to (\ref{eq:p-para f}) in $\Xi$.
\end{thm}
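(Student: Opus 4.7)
The plan is to approximate the viscosity supersolution $u$ by its inf-convolutions $u_\varepsilon$ and to pass to the limit in their weak formulation. The preceding results already supply everything on the approximating side: for each $\varepsilon>0$, $u_\varepsilon$ is locally Lipschitz on $\Xi_\varepsilon$ and, by the two previous lemmas (treating the ranges $p\geq 2$ and $1<p<2$ separately), a weak supersolution to (\ref{eq:p-para f}) there; moreover $u_\varepsilon\to u$ locally uniformly in $\Xi$ by the standard properties of the inf-convolution.

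Fix a non-negative $\varphi\in C_0^\infty(\Xi)$. For all sufficiently small $\varepsilon$ one has $\supp\varphi\Subset\Xi_\varepsilon$, so
\[
\int_\Xi -u_\varepsilon\partial_t\varphi+|Du_\varepsilon|^{p-2}Du_\varepsilon\cdot D\varphi-\varphi f(Du_\varepsilon)\d z\geq 0.
\]
Choose $\xi\in C_0^\infty(\Xi)$ with $\xi\equiv 1$ on some open set $U\supset\supp\varphi$ and $\supp\xi\Subset\Xi_{\varepsilon_0}$ for some $\varepsilon_0>0$. Since $\|u_\varepsilon\|_{L^\infty}\leq\|u\|_{L^\infty}$, Caccioppoli's inequality (Lemma \ref{lem:Caccioppoli}) yields the bound $\sup_{\varepsilon\leq\varepsilon_0}\|Du_\varepsilon\|_{L^p(U)}<\infty$. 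Applying Lemma \ref{lem:lr conv} to a sequence $u_{\varepsilon_j}$ then gives $Du_{\varepsilon_j}\to Du$ in $L^r(U)$ for every $1<r<p$; in particular $u\in L^p(t_1,t_2;W^{1,p}(\Omega))$ whenever $\Omega_{t_1,t_2}\Subset\Xi$, and along a further subsequence $Du_{\varepsilon_j}\to Du$ almost everywhere in $U$.

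With these ingredients the passage to the limit becomes a Vitali argument. The time-derivative term converges because $u_{\varepsilon_j}\to u$ uniformly on $\supp\varphi$. The sequence $|Du_{\varepsilon_j}|^{p-2}Du_{\varepsilon_j}$ is uniformly bounded in $L^{p/(p-1)}(U)$ and converges a.e.\ to $|Du|^{p-2}Du$, so by Vitali's theorem it converges strongly in $L^{p/(p-1)}(U)$, which suffices against the bounded $D\varphi$. For the first-order term, the growth condition (\ref{eq:gcnd}) with $\beta<p$ yields $|f(Du_{\varepsilon_j})|\leq C_f(1+|Du_{\varepsilon_j}|^\beta)$, uniformly bounded in $L^{p/\beta}(U)$ and hence equi-integrable; continuity of $f$ together with a.e.\ convergence of $Du_{\varepsilon_j}$ then gives $f(Du_{\varepsilon_j})\to f(Du)$ in $L^1(\supp\varphi)$. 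Passing to the limit in the displayed inequality delivers the required weak supersolution inequality for $u$.

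The main obstacle is the mismatch between the convergence provided by Lemma \ref{lem:lr conv}, which only holds in $L^r_{loc}$ with $r<p$, and the $L^p$ resp.\ $L^{p/(p-1)}$ frameworks in which the Sobolev norm and the nonlinear flux $|Du|^{p-2}Du$ naturally live. The remedy, as outlined above, is to upgrade the a.e.\ convergence of $Du_{\varepsilon_j}$ to strong convergence of the nonlinearities via Vitali's theorem, using the uniform $L^p_{loc}$-bound from Caccioppoli's inequality as the equi-integrability ingredient.
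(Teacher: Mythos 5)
Your proposal follows the same route as the paper's proof: approximate by inf-convolutions, invoke the two lemmas showing $u_{\varepsilon}$ is a weak supersolution in $\Xi_{\varepsilon}$, use Caccioppoli's inequality for a uniform local $L^{p}$-bound on $Du_{\varepsilon}$, use the $L_{loc}^{r}$-convergence lemma, and pass to the limit in the weak formulation. The only genuine difference is the bookkeeping in the limit passage: the paper treats the flux term with the vector inequalities $\bigl|\left|a\right|^{p-2}a-\left|b\right|^{p-2}b\bigr|\leq2^{2-p}\left|a-b\right|^{p-1}$ (for $p<2$) resp.\ $2^{-1}\left(\left|a\right|^{p-2}+\left|b\right|^{p-2}\right)\left|a-b\right|$ (for $p\geq2$) plus H\"older, and the term $f(Du_{\varepsilon})$ by splitting at a level $M$, whereas you use a.e.\ convergence plus equi-integrability (Vitali); both are valid and rest on the same ingredients. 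One intermediate claim of yours is, however, false as stated: a sequence bounded in $L^{p/(p-1)}(U)$ that converges a.e.\ need not converge strongly in $L^{p/(p-1)}(U)$ --- Vitali in that space would require equi-integrability of $\left|Du_{\varepsilon_{j}}\right|^{p}$, which the Caccioppoli bound does not provide (a concentrating sequence is a counterexample). What you actually need, and what your hypotheses do give, is weaker: the uniform $L^{p/(p-1)}$-bound makes the fluxes equi-integrable in $L^{1}(\supp\varphi)$, so Vitali yields $L^{1}$-convergence, which suffices against the bounded $D\varphi$ (alternatively, boundedness plus a.e.\ convergence gives weak convergence in $L^{p/(p-1)}$, which also suffices). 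A smaller point of phrasing: the membership $u\in L^{p}(t_{1},t_{2};W^{1,p}(\Omega))$ does not follow from the $L^{r}$-convergence alone ($r<p$), but from the uniform $L^{p}$-bound, e.g.\ by Fatou along the a.e.-convergent subsequence or by weak $L^{p}$-compactness as in the paper; since you have that bound in hand, this is easily repaired.
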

\begin{proof}
Fix a non-negative test function $\smash{\varphi\in C_{0}^{\infty}(\Xi)}$
and take an open cylinder $\smash{\Omega_{t_{1},t_{2}}\Subset\Xi}$
such that $\smash{\supp\varphi\Subset\Omega_{t_{1},t_{2}}}$. Let
$\varepsilon>0$ be so small that $\smash{\Omega_{t_{1},t_{2}}\Subset\Xi_{\varepsilon}}$.
Then Lemma \ref{lem:inf conv is weak} implies that $\smash{u_{\varepsilon}}$
is a weak supersolution to (\ref{eq:p-para f}) in $\smash{\Xi_{\varepsilon}}$.
Therefore by the Caccioppoli's inequality Lemma \ref{lem:Caccioppoli},
$\smash{Du_{\varepsilon}}$ is bounded in $\smash{L^{p}(\Omega_{t_{1},t_{2}})}$.
Hence $\smash{Du_{\varepsilon}}$ converges weakly in $\smash{L^{p}(\Omega_{t_{1},t_{2}})}$
up to a subsequence. Since also $\smash{u_{\varepsilon}\rightarrow u}$
in $\smash{L^{\infty}(\Omega_{t_{1},t_{2}})}$, it follows that $\smash{u\in L^{p}(t_{1},t_{2};W^{1,p}(\Omega))}$. 

Since $\smash{u_{\varepsilon}}$ is a weak supersolution, it remains
to show that up to a subsequence
\begin{equation}
\lim_{\varepsilon\rightarrow0}\int_{\Omega_{t_{1},t_{2}}}u_{\varepsilon}\partial_{t}\varphi+\left|Du_{\varepsilon}\right|^{p-2}Du_{\varepsilon}\cdot D\varphi\d z=\int_{\Omega_{t_{1},t_{2}}}u\partial_{t}\varphi+\left|Du\right|^{p-2}Du\cdot D\varphi\d z\label{eq:visc is weak 2}
\end{equation}
and
\begin{equation}
\lim_{\varepsilon\rightarrow0}\int_{\Omega_{t_{1},t_{2}}}\varphi f(Du_{\varepsilon})\d z=\int_{\Omega_{t_{1},t_{2}}}\varphi f(Du)\d z.\ \ \ \ \ \label{eq:visc is weak 3}
\end{equation}
Since $\smash{u_{\varepsilon}\rightarrow u}$ in $\smash{L^{\infty}(\Omega_{t_{1},t_{2}})}$
and $\smash{Du_{\varepsilon}\rightarrow Du}$ in $\smash{L^{r}(\Omega_{t_{1},t_{2}})}$
for any $\smash{1<r<p}$ by Lemma \ref{lem:lr conv}, the claim (\ref{eq:visc is weak 2})
follows by applying the vector inequality (see \cite[p95-96]{lindqvist_plaplace})
\[
\left|\left|a\right|^{p-2}a-\left|b\right|^{p-2}b\right|\leq\begin{cases}
2^{2-p}\left|a-b\right|^{p-1} & \text{when }p<2,\\
2^{-1}\left(\left|a\right|^{p-2}+\left|b\right|^{p-2}\right)\left|a-b\right| & \text{when }p\geq2.
\end{cases}
\]
\begin{note}Indeed, if $1<p<2$ , we have
\begin{align*}
\int_{\Omega_{t_{1},t_{2}}^{\prime}}\left|Du_{\varepsilon}-Du\right|^{p-1}\d z\leq & \left|\Omega_{t_{1}t_{2}}^{\prime}\right|^{\frac{r-\left(p-1\right)}{r}}\bigg(\int_{\Omega_{t_{1},t_{2}}^{\prime}}\left|Du_{\varepsilon}-Du\right|^{r}\d z\bigg)^{\frac{p-1}{r}}
\end{align*}
for any $1<r<p$, and if $p>2$, then
\begin{align*}
\int_{\Omega_{t_{1},t_{2}}^{\prime}} & \left(\left|Du_{\varepsilon}\right|^{p-2}+\left|Du\right|^{p-2}\right)\left|Du_{\varepsilon}-Du\right|\d z\\
\leq & \bigg(\bigg(\int_{\Omega_{t_{1},t_{2}}^{\prime}}\left|Du_{\varepsilon}\right|^{p}\d z\bigg)^{\frac{p-2}{p}}+\bigg(\int_{\Omega_{t_{1},t_{2}}^{\prime}}\left|Du_{\varepsilon}\right|^{p}\d z\bigg)^{\frac{p-2}{p}}\bigg)\bigg(\int_{\Omega_{t_{1},t_{2}}^{\prime}}\left|Du_{\varepsilon}-Du\right|^{\frac{p}{2}}\d z\bigg)^{\frac{2}{p}}.
\end{align*}
\end{note}To show (\ref{eq:visc is weak 3}), let $M\geq1$ and write
using the growth condition (\ref{eq:gcnd})
\begin{align*}
\int_{\Omega_{t_{1},t_{2}}} & \left|f(Du_{\varepsilon})-f(Du)\right|\d z\\
\leq & \int_{\left\{ \left|Du_{\varepsilon}\right|<M\right\} }\left|f(Du_{\varepsilon})-f(Du)\right|\d z+\int_{\left\{ \left|Du_{\varepsilon}\right|\geq M\right\} }C_{f}(2+\left|Du_{\varepsilon}\right|^{\beta}+\left|Du\right|^{\beta})\d z\\
=: & I_{1}+I_{2}.
\end{align*}
Then by Hölder's inequality
\begin{align*}
I_{2} & =C_{f}\int_{\left\{ \left|Du_{\varepsilon}\right|\geq M\right\} }\frac{2\left|Du_{\varepsilon}\right|^{p}}{\left|Du_{\varepsilon}\right|^{p}}+\frac{\left|Du_{\varepsilon}\right|^{p}}{\left|Du_{\varepsilon}\right|^{p-\beta}}+\frac{\left|Du\right|^{\beta}\left|Du_{\varepsilon}\right|^{p-\beta}}{\left|Du_{\varepsilon}\right|^{p-\beta}}\d z\\
 & \leq C_{f}\bigg(\frac{2}{M^{p}}+\frac{1}{M^{p-\beta}}\bigg)\left\Vert Du_{\varepsilon}\right\Vert _{L^{p}(\Omega_{t_{1},t_{2}})}^{p}+C_{f}\frac{1}{M^{p-\beta}}\left\Vert Du\right\Vert _{L^{p}(\Omega_{t_{1},t_{2}})}^{\beta}\left\Vert Du_{\varepsilon}\right\Vert _{L^{p}(\Omega_{t_{1},t_{2}})}^{p-\beta}\\
 & \leq\frac{1}{M^{p-\beta}}C(p,\beta,C_{f},\left\Vert Du\right\Vert _{L^{p}(\Omega_{t_{1},t_{2}})},\sup_{\varepsilon}\left\Vert Du_{\varepsilon}\right\Vert _{L^{p}(\Omega_{t_{1},t_{2}})}).
\end{align*}
On the other hand, we have $\smash{\left|f(Du_{\varepsilon})-f(Du)\right|\rightarrow0}$
a.e.\ in $\smash{\Omega_{t_{1},t_{2}}}$ up to a subsequence and
the integrand in $\smash{I_{1}}$ is dominated by an integrable function
since the growth condition (\ref{eq:gcnd}) implies
\[
\left|f(Du_{\varepsilon})-f(Du)\right|\leq C_{f}(2+\left|M\right|^{\beta}+\left|Du\right|^{\beta})\quad\text{when}\quad\left|Du_{\varepsilon}\right|<M.
\]
Hence, for any $M\geq1$, we have $\smash{I_{1}\rightarrow0}$ as
$\varepsilon\rightarrow0$ by the dominated convergence theorem. By
taking first large $M\geq1$ and then small $\varepsilon>0$, we can
make $\smash{I_{1}+I_{2}}$ arbitrarily small.
\end{proof}
The rest of this section is devoted to the properties of the inf-convolution.
The facts in the following lemma are well known, see e.g. \cite{userguide},
\cite{newequivalence}, \cite{nikos} or \cite{ParvVaz}.
\begin{lem}
\renewcommand{\labelenumi}{(\roman{enumi})}\label{lem:inf conv prop}Assume
that $u:\Xi\rightarrow\mathbb{R}$ is lower semicontinuous and bounded.
Then $u_{\varepsilon}$ has the following properties.
\begin{enumerate}
\item We have $u_{\varepsilon}\leq u$ in $\Xi$ and $u_{\varepsilon}\rightarrow u$
locally uniformly as $\varepsilon\rightarrow0$.
\item Denote $r(\varepsilon):=\left(q\varepsilon^{q-1}\osc_{\Xi}u\right)^{\frac{1}{q}}$
, $t(\varepsilon):=\left(2\varepsilon\osc_{\Xi}u\right)^{\frac{1}{2}}$.
For $(x,t)\in\mathbb{R}^{N+1}$, set 
\begin{align*}
\Xi_{\varepsilon}:= & \left\{ (x,t)\in\Xi:B_{r(\varepsilon)}(x)\times(t-t(\varepsilon),t+t(\varepsilon))\Subset\Xi\right\} .
\end{align*}
Then for any $(x,t)\in\Xi_{\varepsilon}$ there exists $(x_{\varepsilon},t_{\varepsilon})\in B_{r(\varepsilon)}(x)\times(t-t(\varepsilon),t+t(\varepsilon))$
such that 
\[
u_{\varepsilon}(x,t)=u(x_{\varepsilon},t_{\varepsilon})+\frac{\left|x-x_{\varepsilon}\right|^{q}}{q\varepsilon^{q-1}}+\frac{\left|t-t_{\varepsilon}\right|^{2}}{2\varepsilon}.
\]
\item The function $u_{\varepsilon}$ is semi-concave in $\Xi_{\varepsilon}$
with a semi-concavity constant depending only on $u$, $q$ and $\varepsilon$.
\item Assume that $u_{\varepsilon}$ is differentiable in time and twice
differentiable in space at $(x,t)\in\Xi_{\varepsilon}$. Then 
\begin{align*}
\partial_{t}u_{\varepsilon}(x,t)= & \frac{t-t_{\varepsilon}}{\varepsilon},\\
Du_{\varepsilon}(x,t)= & \left(x-x_{\varepsilon}\right)\frac{\left|x-x_{\varepsilon}\right|^{q-2}}{\varepsilon^{q-1}},\\
D^{2}u_{\varepsilon}(x,t)\leq & \frac{q-1}{\varepsilon}\left|Du_{\varepsilon}\right|^{\frac{q-1}{q-2}}I.
\end{align*}
\end{enumerate}
\end{lem}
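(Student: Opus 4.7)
The plan is to establish properties (i)--(iv) sequentially, with (ii) providing the crucial localization that powers the remaining steps.

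For (i), the inequality $u_\varepsilon \leq u$ is immediate by choosing $(y,s)=(x,t)$ in the defining infimum. To get convergence, I would observe that any near-minimizer $(y_\varepsilon, s_\varepsilon)$ must satisfy $|x-y_\varepsilon|^q/(q\varepsilon^{q-1}) + |t-s_\varepsilon|^2/(2\varepsilon) \leq \osc_\Xi u$, which forces $(y_\varepsilon, s_\varepsilon) \to (x,t)$ as $\varepsilon \to 0$ with rate uniform in $(x,t)$ on compact subsets of $\Xi$. Lower semicontinuity of $u$ then yields $\liminf u_\varepsilon \geq u$, which together with $u_\varepsilon \leq u$ and the monotonicity $u_\varepsilon \uparrow u$ as $\varepsilon \downarrow 0$ promotes pointwise convergence to local uniform convergence by a standard argument.

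For (ii), if $(y,s) \in \Xi$ lies outside $B_{r(\varepsilon)}(x) \times (t-t(\varepsilon), t+t(\varepsilon))$, then its penalty alone already exceeds $\osc_\Xi u$, so
\[
u(y,s) + \frac{|x-y|^q}{q\varepsilon^{q-1}} + \frac{|t-s|^2}{2\varepsilon} \geq \inf_\Xi u + \osc_\Xi u \geq u(x,t) \geq u_\varepsilon(x,t),
\]
and such $(y,s)$ cannot realize the infimum. Existence of a minimizer then follows from lower semicontinuity of $u$ on the compact ``candidate set,'' which is contained in $\Xi$ precisely because $(x,t) \in \Xi_\varepsilon$.

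For (iii), I would use (ii) to restrict the infimum defining $u_\varepsilon$ on $\Xi_\varepsilon$ to candidates $(y,s)$ with $|x-y| \leq r(\varepsilon)$ and $|t-s| \leq t(\varepsilon)$. On this range the kernel $\Phi(x,t;y,s) := |x-y|^q/(q\varepsilon^{q-1}) + |t-s|^2/(2\varepsilon)$ has Hessian in $(x,t)$ bounded by some $C(q,\varepsilon,u)\,I$, with $C$ depending on $u$ through $r(\varepsilon)$. Consequently $u_\varepsilon - C(q,\varepsilon,u)(|x|^2+t^2)$ is a pointwise infimum of concave functions of $(x,t)$ and hence concave, which is exactly the desired semi-concavity. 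Property (iv) can then be read off the same picture: at a point $(x,t) \in \Xi_\varepsilon$ where $u_\varepsilon$ is twice differentiable, pick any minimizer $(x_\varepsilon, t_\varepsilon)$ from (ii); the $C^2$ function $(x',t') \mapsto u(x_\varepsilon,t_\varepsilon) + |x'-x_\varepsilon|^q/(q\varepsilon^{q-1}) + |t'-t_\varepsilon|^2/(2\varepsilon)$ touches $u_\varepsilon$ from above at $(x,t)$, so matching first derivatives gives the formulas for $\partial_t u_\varepsilon$ and $Du_\varepsilon$, and matching second derivatives yields the stated Hessian bound after using the gradient identity to eliminate $|x-x_\varepsilon|$ in favour of $|Du_\varepsilon|$.

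The main obstacle is the semi-concavity in (iii): the kernel $|x-y|^q/(q\varepsilon^{q-1})$ has Hessian in $x$ growing like $|x-y|^{q-2}$, so the naive ``infimum of uniformly semi-concave functions'' argument fails without first restricting to a compact range of $(y,s)$. Property (ii) is precisely the tool that makes this restriction legitimate and is therefore the linchpin of the entire lemma.
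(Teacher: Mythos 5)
The paper never proves this lemma: it is stated as a collection of well-known facts with a pointer to \cite{userguide}, \cite{newequivalence}, \cite{nikos}, \cite{ParvVaz}, so there is no in-paper argument to compare against. Your outline is the standard route, and parts (ii)--(iv) are essentially right: the penalty threshold $\osc_{\Xi}u$ localizes minimizers and, by lower semicontinuity of $u$ on the resulting compact candidate set, gives existence; semi-concavity follows by viewing $u_{\varepsilon}$ as an infimum of functions with uniformly bounded Hessians (one small point of care: the restriction furnished by (ii) depends on the base point $(x,t)$, so either argue on small neighbourhoods, say with radius $2r(\varepsilon)$, or simply use that $\Xi$ is bounded so the Hessian of the kernel is bounded anyway); and the touching-from-above argument at points of differentiability yields the first-order identities and $D^{2}u_{\varepsilon}\leq(q-1)\varepsilon^{1-q}\left|x-x_{\varepsilon}\right|^{q-2}I$. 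Note, however, that eliminating $\left|x-x_{\varepsilon}\right|$ via $\left|Du_{\varepsilon}\right|=\left|x-x_{\varepsilon}\right|^{q-1}/\varepsilon^{q-1}$ gives the exponent $\frac{q-2}{q-1}$, i.e. $D^{2}u_{\varepsilon}\leq\frac{q-1}{\varepsilon}\left|Du_{\varepsilon}\right|^{\frac{q-2}{q-1}}I$; the exponent $\frac{q-1}{q-2}$ printed in the statement is a typo (the paper itself uses $\frac{q-2}{q-1}$ in the proof of Lemma \ref{lem:inf conv is weak} and in the requirement $p-2+\frac{q-2}{q-1}>0$), so ``the stated Hessian bound'' is not literally what your computation yields, and you should say which one you prove.

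The genuine gap is in (i). Your ``standard argument'' upgrading monotone pointwise convergence to locally uniform convergence is Dini's theorem, which requires the limit $u$ to be continuous, whereas here $u$ is only assumed lower semicontinuous; the uniform smallness of $\left|x-y_{\varepsilon}\right|$ and $\left|t-s_{\varepsilon}\right|$ does not substitute for (uniform) continuity of $u$. In fact no argument can close this as stated: each $u_{\varepsilon}$ is continuous (even locally Lipschitz), so a locally uniform limit would force $u$ to be continuous, and for a discontinuous lsc $u$ such as the characteristic function of the complement of a point the convergence is pointwise but not uniform near the discontinuity. What your argument actually delivers under the stated hypotheses is $u_{\varepsilon}\leq u$, $u_{\varepsilon}\uparrow u$ pointwise as $\varepsilon\downarrow0$, and locally uniform convergence on sets where $u$ is continuous -- which is how the cited references state it. This is an imprecision of the lemma itself rather than a flaw in your strategy, but it should be flagged rather than hidden behind ``a standard argument,'' since the paper later invokes $u_{\varepsilon}\rightarrow u$ in $L^{\infty}$ on compact subcylinders.
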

Next we show that the inf-convolution of a viscosity supersolution
to (\ref{eq:p-para f}) is still a supersolution in the smaller domain
$\smash{\Xi_{\varepsilon}}$. Since the inf-convolution is ``flat
enough'', that is, since $\smash{q>p/(p-1)}$, the inf-convolution
essentially cancels the singularity of the $\smash{p}$-Laplace operator.
This allows us to extract information on the time derivative at those
points of differentiability where $\smash{Du_{\varepsilon}}$ vanishes.
\begin{lem}
\label{lem:infconv is viscsup}Let $1<p<\infty$. Let $u$ be a viscosity
supersolution to (\ref{eq:p-para f}) in $\Xi$. Then the inf-convolution
$u_{\varepsilon}$ is also a viscosity supersolution to (\ref{eq:p-para f})
in $\Xi_{\varepsilon}$. 

Moreover, if $u_{\varepsilon}$ is differentiable in time and twice
differentiable in space at $(x,t)\in\Xi_{\varepsilon}$ and $Du_{\varepsilon}(x,t)=0$,
then $\partial_{t}u_{\varepsilon}(x,t)-f(0)\geq0$.
\end{lem}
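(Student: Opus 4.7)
The proof has two parts and I would handle them in turn. For the first part, fix $(x_0, t_0) \in \Xi_\varepsilon$ and a $C^2$ function $\varphi$ touching $u_\varepsilon$ from below at $(x_0, t_0)$ in the sense of the definition, and let $(x_\varepsilon, t_\varepsilon)$ realize the infimum in the definition of $u_\varepsilon(x_0, t_0)$ (Lemma~\ref{lem:inf conv prop}(ii)). The plan is to transfer $\varphi$ into a test function for $u$ at $(x_\varepsilon, t_\varepsilon)$ by translation: set
\[
\psi(y, s) := \varphi(y + x_0 - x_\varepsilon,\, s + t_0 - t_\varepsilon) - \frac{|x_0 - x_\varepsilon|^q}{q\varepsilon^{q-1}} - \frac{|t_0 - t_\varepsilon|^2}{2\varepsilon}.
\]
Chaining $\varphi \leq u_\varepsilon$ with the inf-convolution bound $u_\varepsilon(a, b) \leq u(y, s) + \frac{|a - y|^q}{q\varepsilon^{q-1}} + \frac{|b - s|^2}{2\varepsilon}$ (applied at $a = y + x_0 - x_\varepsilon$, $b = s + t_0 - t_\varepsilon$), one checks $\psi \leq u$, with equality at $(x_\varepsilon, t_\varepsilon)$ by the choice of the minimizer and strict inequality elsewhere because $\varphi$ touches $u_\varepsilon$ strictly only at $(x_0, t_0)$. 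Translation preserves gradients, so $D\psi \neq 0$ for $y \neq x_\varepsilon$. Applying the viscosity supersolution property of $u$ at $(x_\varepsilon, t_\varepsilon)$ and undoing the change of variables yields the required limsup inequality for $\varphi$ at $(x_0, t_0)$.

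For the moreover assertion, the key observation is that $Du_\varepsilon(x, t) = 0$ forces $x_\varepsilon = x$ via the gradient formula of Lemma~\ref{lem:inf conv prop}(iv). The natural candidate test function for $u$ at $(x, t_\varepsilon)$ is
\[
\tilde\psi(y, s) := u_\varepsilon(x, t) - \frac{|x - y|^q}{q\varepsilon^{q-1}} - \frac{|t - s|^2}{2\varepsilon},
\]
which is $C^2$ for $q \geq 2$ and touches $u$ from below at $(x, t_\varepsilon)$ by the inf-convolution identity. To obtain the strict touching required by the definition I would pass to $\tilde\psi_\eta := \tilde\psi - \eta(|y - x|^{2q} + (s - t_\varepsilon)^{2q})$ for any fixed $\eta > 0$; since $2q - 2 > q - 2$, the perturbation is of strictly lower order than $\tilde\psi$ near $(x, t_\varepsilon)$, so $D\tilde\psi_\eta \neq 0$ for $y \neq x$ and the asymptotic analysis below is unchanged. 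The viscosity supersolution property of $u$ applied to $\tilde\psi_\eta$ at $(x, t_\varepsilon)$ gives
\[
\limsup_{\substack{(y, s) \to (x, t_\varepsilon) \\ y \neq x}} \bigl(\partial_s \tilde\psi_\eta(y, s) - \Delta_p \tilde\psi_\eta(y, s) - f(D\tilde\psi_\eta(y, s))\bigr) \geq 0.
\]

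The remaining task is to show this limsup collapses to $\partial_t u_\varepsilon(x, t) - f(0)$. Direct computation gives $D\tilde\psi_\eta(y, s) = -(y - x)\bigl[|y-x|^{q-2}/\varepsilon^{q-1} + 2q\eta|y-x|^{2q-2}\bigr]$, so $|D\tilde\psi_\eta| \sim |x - y|^{q-1}/\varepsilon^{q-1}$, and $|D^2 \tilde\psi_\eta| = O(|x - y|^{q-2})$. Using the standard bound $|\Delta_p \tilde\psi_\eta| \leq C(N, p) |D\tilde\psi_\eta|^{p-2} |D^2\tilde\psi_\eta|$ when $D\tilde\psi_\eta \neq 0$, one obtains
\[
|\Delta_p \tilde\psi_\eta| = O\bigl(|x - y|^{(q-1)(p-2) + q - 2}\bigr) = O\bigl(|x - y|^{(q-1)(p-1) - 1}\bigr),
\]
and the exponent is strictly positive precisely because $q$ was chosen so that $p - 2 + (q - 2)/(q - 1) > 0$. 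Hence $\Delta_p \tilde\psi_\eta \to 0$, while $\partial_s \tilde\psi_\eta \to (t - t_\varepsilon)/\varepsilon = \partial_t u_\varepsilon(x, t)$ and $f(D\tilde\psi_\eta) \to f(0)$ by continuity. The limsup inequality then reduces to $\partial_t u_\varepsilon(x, t) - f(0) \geq 0$, as required. The main obstacle is precisely the vanishing of $\Delta_p \tilde\psi_\eta$ in the singular regime $1 < p < 2$, where $|D\tilde\psi_\eta|^{p-2}$ diverges and must be tamed by the decay of $|D^2\tilde\psi_\eta|$; this is exactly what the flatness condition $q > p/(p-1)$ on the inf-convolution is engineered to guarantee.
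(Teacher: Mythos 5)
Your proposal is correct and follows essentially the same route as the paper: translate the test function to the minimizing point $(x_\varepsilon,t_\varepsilon)$ for the first claim, and for the ``moreover'' part use the inf-convolution bound itself as a test function at $(x,t_\varepsilon)$, killing $\Delta_p$ via the flatness condition $(q-1)(p-1)>1$, which is exactly the paper's condition $p-2+\frac{q-2}{q-1}>0$. Your extra perturbation $\eta\left(|y-x|^{2q}+|s-t_\varepsilon|^{2q}\right)$ to enforce strict touching, and the explicit asymptotics for $\Delta_p\tilde\psi_\eta$, are just more careful versions of steps the paper states without detail.
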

\begin{proof}
Assume that $\varphi$ touches $u_{\varepsilon}$ from below at $(x,t)\in\Xi_{\varepsilon}$.
Let $(x_{\varepsilon},t_{\varepsilon})$ be like in the property (ii)
of Lemma \ref{lem:inf conv prop}. Then
\begin{align}
\varphi(x,t) & =u_{\varepsilon}(x,t)=u(x_{\varepsilon},t_{\varepsilon})+\frac{\left|x-x_{\varepsilon}\right|^{q}}{q\varepsilon^{q-1}}+\frac{\left|t-t_{\varepsilon}\right|^{2}}{2\varepsilon},\label{eq:infconv is viscsup1}\\
\varphi(y,\tau) & \leq u_{\varepsilon}(y,\tau)\leq u(z,s)+\frac{\left|y-z\right|^{q}}{q\varepsilon^{q-1}}+\frac{\left|\tau-s\right|^{2}}{2\varepsilon}\text{ for all }(y,\tau),(z,s)\in\Xi.\label{eq:infconv is viscsup2}
\end{align}
Set
\[
\psi(z,s):=\varphi(z+x-x_{\varepsilon},s+t-t_{\varepsilon})-\frac{\left|x-x_{\varepsilon}\right|^{q}}{q\varepsilon^{q-1}}-\frac{\left|t-t_{\varepsilon}\right|^{2}}{2\varepsilon}.
\]
Then $\psi$ touches $u$ from below at $(x_{\varepsilon},t_{\varepsilon})$
since by (\ref{eq:infconv is viscsup1})
\begin{align*}
\psi(x_{\varepsilon,}t_{\varepsilon})= & \varphi(x,t)-\frac{\left|x-x_{\varepsilon}\right|^{q}}{q\varepsilon^{q-1}}-\frac{\left|t-t_{\varepsilon}\right|^{2}}{2\varepsilon}=u(x_{\varepsilon},t_{\varepsilon})
\end{align*}
and selecting $(y,\tau)=(z+x-x_{\varepsilon},s+t-t_{\varepsilon})$
in (\ref{eq:infconv is viscsup2}) gives
\begin{align*}
\psi(z,s)= & \varphi(z+x-x_{\varepsilon},s+t-t_{\varepsilon})-\frac{\left|x-x_{\varepsilon}\right|^{q}}{q\varepsilon^{q-1}}-\frac{\left|t-t_{\varepsilon}\right|^{2}}{2\varepsilon}\leq u(z,s).
\end{align*}
Since $u$ is a viscosity supersolution, it follows that
\begin{align*}
0\leq & \limsup_{\substack{\substack{(z,s)\rightarrow(x_{\varepsilon},t_{\varepsilon})\\
z\not=x_{\varepsilon}
}
}
}\left(\partial_{s}\psi(z,s)-\Delta_{p}\psi(z,s)-f(D\psi(z,s))\right)\\
= & \limsup_{\substack{\substack{(z,s)\rightarrow(x,t)\\
z\not=x
}
}
}\left(\partial_{s}\varphi(z,s)-\Delta_{p}\varphi(z,s)-f(D\varphi(z,s))\right),
\end{align*}
and the first claim is proven. To prove the second claim, assume that
$u_{\varepsilon}$ is differentiable in time and twice differentiable
in space at $(x,t)\in\Xi_{\varepsilon}$ and $Du_{\varepsilon}(x,t)=0$.
By the property (iv) in Lemma \ref{lem:inf conv prop}, we have $x=x_{\varepsilon}$,
so that
\[
u_{\varepsilon}(x,t)=u(x,t_{\varepsilon})+\frac{\left|t-t_{\varepsilon}\right|^{2}}{2\varepsilon}.
\]
Hence by the definition of inf-convolution 
\[
u(y,s)+\frac{\left|x-y\right|^{q}}{q\varepsilon^{q-1}}+\frac{\left|t-s\right|^{2}}{2\varepsilon}\geq u_{\varepsilon}(x,t)=u(x,t_{\varepsilon})+\frac{\left|t-t_{\varepsilon}\right|^{2}}{2\varepsilon}\text{ for all }(y,s)\in\Xi.
\]
Arranging the terms as
\[
u(y,s)\geq u(x,t_{\varepsilon})-\frac{\left|x-y\right|^{q}}{q\varepsilon^{q-1}}-\frac{\left|t-s\right|^{2}}{2\varepsilon}+\frac{\left|t-t_{\varepsilon}\right|^{2}}{2\varepsilon}=:\phi(y,s),
\]
we see that the function $\phi$ touches $u$ from below at $(x,t_{\varepsilon})$.
Since $u$ is a viscosity supersolution and $D\phi(y,s)\not=0$ when
$y\not=x$, we have
\[
\limsup_{\substack{\substack{(y,s)\rightarrow(x,t_{\varepsilon})\\
y\not=x
}
}
}\left(\partial_{s}\phi(y,s)-\Delta_{p}\phi(y,s)-f(D\phi(y,s))\right)\geq0.
\]
On the other hand, since $q>p/(p-1)$, we have $\Delta_{p}\phi(y,s)\rightarrow0$
as $y\rightarrow x$. Hence we get
\[
0\leq\partial_{s}\phi(x,t_{\varepsilon})-f(0)=\frac{t-t_{\varepsilon}}{\varepsilon}-f(0)=\partial_{t}u_{\varepsilon}(x,t)-f(0),
\]
where the last equality follows from the property (iv) in Lemma \ref{lem:inf conv prop}.
\end{proof}
\begin{rem}
\label{rem:inf_remark}Semi-concavity implies that the inf-convolution
$\smash{u_{\varepsilon}}$ is locally Lipschitz in $\smash{\Xi_{\varepsilon}}$
(see \cite[p267]{measuretheoryevans}). Therefore $\smash{u_{\varepsilon}}$
is differentiable almost everywhere in $\smash{\Xi_{\varepsilon}}$,
$\smash{\partial_{t}u_{\varepsilon}\in L_{loc}^{\infty}(\Xi_{\varepsilon})}$
and $\smash{u_{\varepsilon}\in L^{\infty}(t_{1},t_{2};W^{1,\infty}(\Omega))}$
for any $\smash{\Omega_{t_{1},t_{2}}\Subset\Xi_{\varepsilon}}$ (see
\cite[p266]{measuretheoryevans}).

Moreover, since the function $\phi(x,t):=u_{\varepsilon}(x,t)-C(q,\varepsilon,u)(\left|x\right|^{2}+\left|t\right|^{2})$
is concave, Alexandrov's theorem implies that $\smash{u_{\varepsilon}}$
is twice differentiable almost everywhere in $\smash{\Xi_{\varepsilon}}.$
Furthermore, the proof of Alexandrov's theorem in \cite[p273]{measuretheoryevans}
establishes that if $\smash{\phi_{j}}$ is the standard mollification
of $\phi$, then $\smash{D^{2}\phi_{j}\rightarrow D^{2}\phi}$ almost
everywhere in $\smash{\Xi_{\varepsilon}}$.
\end{rem}

\section{Lower semicontinuity of supersolutions}

We show the lower semicontinuity of weak supersolutions when $p\geq2$
and the function $f\in C(\mathbb{R}^{N})$ satisfies that $f(0)=0$
as well as the stronger growth condition
\begin{equation}
\left|f(\xi)\right|\leq C_{f}\left(1+\left|\xi\right|^{p-1}\right).\tag{{G2}}\label{eq:gcnd2}
\end{equation}
Our proof follows the method of Kuusi \cite{kuusi09}, but the first-order
term causes some modifications. In particular, our essential supremum
estimate is slightly different, see Theorem \ref{thm:essupest} and
the brief discussion before it. The assumption $f(0)=0$ is used to
ensure that the positive part $u_{+}$ of a subsolution is still a
subsolution.

We begin by proving estimates for the essential supremum of a subsolution
using the Moser's iteration technique. We first need the following
Caccioppoli's inequalities.
\begin{lem}[Caccioppoli's inequalities]
\label{lem:caccioppoli0} Assume that $p\geq2$ and that (\ref{eq:gcnd2})
holds. Suppose that $u$ is a non-negative weak subsolution to (\ref{eq:p-para f})
in $\Omega_{t_{1},t_{2}}$ and $u\in L^{p-1+\lambda}(\Omega_{t_{1},t_{2}})$
for some $\lambda\geq1$. Then there exists a constant $C=C(p,C_{f})$
that satisfies the estimates
\begin{align*}
\essup_{t_{1}<\tau<t_{2}} & \int_{\Omega}u^{1+\lambda}(x,\tau)\zeta^{p}(x,\tau)\d x\\
\leq & C\int_{\Omega_{t_{1},t_{2}}}\lambda u^{p-1+\lambda}\left|D\zeta\right|^{p}+u^{1+\lambda}\left|\partial_{t}\zeta\right|\zeta^{p-1}+\lambda\left(u^{\lambda}+u^{p-1+\lambda}\right)\zeta^{p}\d z
\end{align*}
and
\begin{align*}
\int_{\Omega_{t_{1},t_{2}}} & \left|D(u^{\frac{p-1+\lambda}{p}}\zeta)\right|^{p}\d z\\
\leq & C\int_{\Omega_{t_{1},t_{2}}}\lambda^{p}u^{p-1+\lambda}\left|D\zeta\right|^{p}+\lambda^{p-1}u^{1+\lambda}\left|\partial_{t}\zeta\right|\zeta^{p-1}+\lambda^{p}\left(u^{\lambda}+u^{p-1+\lambda}\right)\zeta^{p}\d z
\end{align*}
for all non-negative $\zeta\in C^{\infty}(\Omega\times[t_{1},t_{2}])$
such that $\supp\zeta(\cdot,t)\Subset\Omega$ and $\zeta(x,t_{1})=0$. 
\end{lem}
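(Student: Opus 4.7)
The approach is the classical Moser-type choice of test function. For a fixed $\tau\in(t_1,t_2)$ let $\eta$ be a piecewise-linear cut-off in time that approximates $\chi_{(t_1,\tau)}$, and test the subsolution inequality against $\varphi:=u^\lambda\zeta^p\eta$. Since $\partial_t u$ does not exist classically, this must be done via the time-mollification machinery of Lemma \ref{lem:time conv lemma}, which under the stronger growth assumption (\ref{eq:gcnd2}) does not require $\varphi\in L^\infty$; if needed for integrability one also replaces $u$ by the truncation $u_k:=\min(u,k)$ and lets $k\to\infty$ by monotone convergence at the end.

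Computing $D\varphi = \lambda u^{\lambda-1}\zeta^p Du + p u^\lambda\zeta^{p-1}D\zeta$, the $p$-Laplacian term produces the good quantity $\lambda\int u^{\lambda-1}|Du|^p\zeta^p$, together with a cross term that I would absorb through Young's inequality in the form
\[
p u^\lambda|Du|^{p-1}\zeta^{p-1}|D\zeta| \leq \epsilon\, u^{\lambda-1}|Du|^p\zeta^p + C(\epsilon,p)\, u^{p-1+\lambda}|D\zeta|^p.
\]
By the growth condition (\ref{eq:gcnd2}) the forcing term satisfies $|\varphi f(Du)|\leq C_f u^\lambda\zeta^p + C_f u^\lambda\zeta^p|Du|^{p-1}$, and the same Young splitting bounds the second piece by $\epsilon u^{\lambda-1}|Du|^p\zeta^p + C(\epsilon,p,C_f)u^{p-1+\lambda}\zeta^p$. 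For the time term I would use the chain rule
\[
u\,\partial_t\varphi = \tfrac{\lambda}{\lambda+1}\partial_t(u^{\lambda+1})\zeta^p\eta + p u^{\lambda+1}\zeta^{p-1}\eta\,\partial_t\zeta + u^{\lambda+1}\zeta^p\,\partial_t\eta,
\]
integrate by parts in $t$ on the first piece (legitimate after mollification, and using $\zeta(\cdot,t_1)=0$), and let $\eta\to\chi_{(t_1,\tau)}$ to produce the boundary mass $\frac{\lambda}{\lambda+1}\int_\Omega u^{\lambda+1}(\cdot,\tau)\zeta^p(\cdot,\tau)$. Absorbing the small terms into the left and taking $\operatorname*{ess\,sup}$ over $\tau$ yields the first inequality, with the $\lambda$ coming from the $\lambda u^{\lambda-1}|Du|^p\zeta^p$ that had to be kept around to cover the $f$-term's $u^\lambda\zeta^p$.

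The second inequality is a bookkeeping consequence of the first. Differentiating $u^{(p-1+\lambda)/p}\zeta$ yields
\[
D\bigl(u^{(p-1+\lambda)/p}\zeta\bigr) = \tfrac{p-1+\lambda}{p}\,u^{(\lambda-1)/p}\zeta\,Du + u^{(p-1+\lambda)/p}D\zeta,
\]
so $|D(u^{(p-1+\lambda)/p}\zeta)|^p \leq C(p)\bigl(\lambda^p u^{\lambda-1}|Du|^p\zeta^p + u^{p-1+\lambda}|D\zeta|^p\bigr)$. Integrating and inserting the bound on $\int u^{\lambda-1}|Du|^p\zeta^p$ produced above (and dropping the non-negative boundary mass) gives exactly the second inequality once the $\lambda^p$ prefactor is distributed (the $\lambda^{p-1}$ in front of the $\partial_t\zeta$ term comes from the factor $\lambda^p\cdot\frac{\lambda+1}{\lambda}\sim\lambda^{p-1}$ when the boundary mass is traded against $\int u^{\lambda+1}|\partial_t\zeta|\zeta^{p-1}$ in the time integration by parts).

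The main obstacle is purely technical: legitimizing $\varphi=u^\lambda\zeta^p\eta$ as an admissible test function. Because $u$ is only a weak subsolution, one must first test the mollified formulation of Lemma \ref{lem:time conv lemma} and perform the time integration by parts there, then pass to the limit $\epsilon\to 0$; all the nonlinear terms in $u$ converge by the $L^p(t_1,t_2;W^{1,p}(\Omega))$-regularity of $u$, while the gradient term $|Du|^{p-2}Du$ already appears linearly in $D\varphi$ and so causes no trouble. The hypothesis $u\in L^{p-1+\lambda}$ is used precisely to guarantee that the right-hand sides are finite after the limit is taken, so the order of limits ($\epsilon\to 0$, $\eta\to\chi$, and $k\to\infty$ if truncation is invoked) has to be arranged so that every quantity encountered lies in $L^1$.
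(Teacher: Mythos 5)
Your overall strategy is the same as the paper's: mollify in time via Lemma \ref{lem:time conv lemma}, test with a Moser-type function $u^{\lambda}\zeta^{p}\eta$, absorb the cross term and the $f$-term through Young's inequality using (\ref{eq:gcnd2}), recover the boundary mass by letting the time cut-off tend to a characteristic function, and deduce the second estimate from the first by expanding $D(u^{(p-1+\lambda)/p}\zeta)$. The gap is in the step you dismiss as ``purely technical''. The truncation is not optional: without it $\varphi=u^{\lambda}\zeta^{p}\eta$ need not lie in $W^{1,p}$, because $D(u^{\lambda})=\lambda u^{\lambda-1}Du$ is exactly the quantity whose integrability the lemma is meant to establish. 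More importantly, the truncation you propose, replacing $u$ by $u_{k}=\min(u,k)$, does not work as stated: testing with $u_{k}^{\lambda}\zeta^{p}\eta$, the coercive term produced by the $p$-Laplacian is $\lambda\chi_{\{u<k\}}u_{k}^{\lambda-1}|Du|^{p}\zeta^{p}$, supported only in $\{u<k\}$, while the cross term $p\,u_{k}^{\lambda}|Du|^{p-1}\zeta^{p-1}|D\zeta|$ and the forcing term $C_{f}u_{k}^{\lambda}\zeta^{p}|Du|^{p-1}$ live on all of $\Omega_{t_{1},t_{2}}$. After Young these leave $\epsilon\,u_{k}^{\lambda-1}|Du|^{p}\zeta^{p}$, which on $\{u\geq k\}$ equals $\epsilon\,k^{\lambda-1}|Du|^{p}\zeta^{p}$ and can neither be absorbed (the good term vanishes there) nor be sent to zero as $k\to\infty$: the only available majorant is $\chi_{\{u\geq k\}}u^{\lambda-1}|Du|^{p}\zeta^{p}$, whose integrability is the thing being proved, the sequence is not monotone, and the hypotheses $u\in L^{p-1+\lambda}$, $Du\in L^{p}_{loc}$ do not control it (a H\"older/Young splitting would need $u^{\lambda p}\in L^{1}_{loc}$).

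The paper fixes precisely this point by an asymmetric truncation: it tests with $\min(u^{\epsilon},k)^{\lambda-1}u^{\epsilon}\zeta^{p}\eta$ and handles the time term via the primitive $g(l)=\int_{0}^{l}\min(r,k)^{\lambda-1}r\,dr$. The gradient of $u_{k}^{\lambda-1}u$ contributes $\bigl((\lambda-1)\chi_{\{u<k\}}u^{\lambda-1}+u_{k}^{\lambda-1}\bigr)|Du|^{p}\geq u_{k}^{\lambda-1}|Du|^{p}$ on the whole domain, so every error term of the form $\epsilon\,u_{k}^{\lambda-1}|Du|^{p}\zeta^{p}$ is absorbable; after absorption the right-hand side involves only powers of $u$ dominated by $u^{\lambda}+u^{p-1+\lambda}\in L^{1}$, so the limit $k\to\infty$ is taken with Fatou on the left rather than monotone convergence on the right. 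With that replacement your computation goes through. The remaining issues are harmless slips: the boundary mass carries the factor $\tfrac{1}{\lambda+1}$, not $\tfrac{\lambda}{\lambda+1}$ (the $\partial_{t}\eta$-contributions from the direct expansion and from the integration by parts combine to $1-\tfrac{\lambda}{\lambda+1}$), and it is exactly this $\lambda^{-1}$ weight that, after multiplying through by $\lambda+1$ and by $\lambda^{p}$ respectively, yields the stated factors $\lambda$, $1$, $\lambda$ in the first estimate and $\lambda^{p}$, $\lambda^{p-1}$, $\lambda^{p}$ in the second.
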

\begin{proof}
We test the regularized equation in Lemma (\ref{lem:time conv lemma})
with $\varphi:=\min(u^{\epsilon},k)^{\lambda-1}u^{\epsilon}\zeta^{p}\eta$,
where $\eta$ is the following cut-off function
\[
\eta(t)=\begin{cases}
0, & t\in(t_{1},s-h),\\
(t-s+h)/2h, & t\in[s-h,s+h],\\
1, & t\in(s+h,\tau-h),\\
(-t+\tau+h)/2h, & t\in[\tau-h,\tau+h],\\
0, & t\in(\tau+h,t_{2}),
\end{cases}
\]
and $t_{1}<s<\tau<t_{2}$, $h>0$. We denote $g(l):=\int_{0}^{l}\min(r,k)^{\lambda-1}r\d r$.
Then integration by parts and Lebesgue's differentiation theorem yield
for a.e.\ $s,\tau\in(t_{1},t_{2})$
\begin{align*}
\int_{\Omega_{t_{1},t_{2}}} & \partial_{t}(u^{\epsilon})\min(u^{\epsilon},k)^{\lambda-1}u^{\epsilon}\zeta^{p}\eta\d z\\
= & \int_{\Omega_{t_{1},t_{2}}}\partial_{t}g(u^{\epsilon})\zeta^{p}\eta\d z\\
= & \int_{\Omega_{t_{1},t_{2}}}-\eta g(u^{\epsilon})\partial_{t}(\zeta^{p})-\zeta^{p}g(u^{\epsilon})\partial_{t}\eta\d z\\
\underset{\epsilon\rightarrow0,h\rightarrow0}{\rightarrow} & \int_{\Omega_{s,\tau}}-g(u)\partial_{t}(\zeta^{p})\d z-\int_{\Omega}\zeta^{p}(x,s)g(u(x,s))\d x+\int_{\Omega}\zeta^{p}(x,\tau)g(u(x,\tau))\d x.
\end{align*}
Letting $s\rightarrow t_{1}$ and observing that the other terms of
(\ref{eq:p-para f reg}) converge as well, we obtain for a.e.\ $\tau\in(t_{1},t_{2})$
that
\begin{align*}
\int_{\Omega} & g(u(x,\tau))\zeta^{p}(x,\tau)\d x\\
 & \leq\int_{\Omega_{t_{1},\tau}}g(u)\partial_{t}(\zeta^{p})-\left|Du\right|^{p-2}Du\cdot D(u_{k}^{\lambda-1}u\zeta^{p})+u_{k}^{\lambda-1}u\zeta^{p}f(Du)\d z,
\end{align*}
where we have denoted $u_{k}:=\min(u,k)$. Since
\[
Du_{k}^{\lambda-1}=\chi_{\left\{ u<k\right\} }(\lambda-1)u^{\lambda-2}Du,
\]
we have by Young's inequality
\begin{align*}
-\left|Du\right|^{p-2}Du\cdot D(u_{k}^{\lambda-1}u\zeta^{p})\leq & -\zeta^{p}\left((\lambda-1)\chi_{\left\{ u<k\right\} }u^{\lambda-1}+u_{k}^{\lambda-1}\right)\left|Du\right|^{p}\\
 & +p\zeta^{p-1}u_{k}^{\lambda-1}u\left|Du\right|^{p-1}\left|D\zeta\right|\\
\leq & -\frac{1}{2}\zeta^{p}u_{k}^{\lambda-1}\left|Du\right|^{p}+C(p)u^{p-1+\lambda}\left|D\zeta\right|^{p}.
\end{align*}
Moreover, by the growth condition (\ref{eq:gcnd2}) and Young's inequality
\begin{align*}
u_{k}^{\lambda-1}u\zeta^{p}f(Du)\leq & C_{f}\zeta^{p}u_{k}^{\lambda-1}u+C_{f}\zeta^{p}u_{k}^{\lambda-1}u\left|Du\right|^{p-1}\\
\leq & C_{f}\zeta^{p}u^{\lambda-1}+C(p,C_{f})\zeta^{p}u^{p-1+\lambda}+\frac{1}{4}\zeta^{p}u_{k}^{\lambda-1}\left|Du\right|^{p}.
\end{align*}
Collecting the estimates, moving the terms with $Du$ to the left-hand
side and letting $k\rightarrow\infty$, we arrive at
\begin{align}
\lambda^{-1}\int_{\Omega} & u^{\lambda+1}\zeta^{p}(x,\tau)\d x+\int_{\Omega_{t_{1},\tau}}\frac{1}{4}\zeta^{p}u^{\lambda-1}\left|Du\right|^{p}\d z\nonumber \\
\leq & C(p,C_{f})\int_{\Omega_{t_{1},\tau}}\lambda^{-1}u^{\lambda+1}\left|\partial_{t}\zeta^{p}\right|+u^{p-1+\lambda}\left|D\zeta\right|^{p}+\zeta^{p}(u^{\lambda-1}+u^{p-1+\lambda})\d z.\label{eq: some est}
\end{align}
Since the integrals are positive, this yields the first inequality
of the lemma by taking essential supremum over $\tau$. The second
inequality follows from (\ref{eq: some est}) by using that
\begin{align*}
\int_{\Omega_{t_{1},t_{2}}}\left|D(u^{\frac{p-1+\lambda}{p}}\zeta)\right|^{p}\d z\leq & C(p)\int_{\Omega_{t_{1},t_{2}}}u^{p-1+\lambda}\left|D\zeta\right|^{p}+\lambda^{p}\zeta^{p}u^{\lambda-1}\left|Du\right|^{p}\d z.\qedhere
\end{align*}
\end{proof}
We first prove the following essential supremum estimate where we
assume that the subsolution is bounded away from zero.
\begin{lem}
\label{lem:essupest} Assume that $p\geq2$ and that (\ref{eq:gcnd2})
holds. Suppose that $u$ is a weak subsolution to (\ref{eq:p-para f})
in $\Xi$ and $B_{R}(x_{0})\times(t_{0}-T,t_{0})\Subset\Xi$ where
$R,T<1$ are such that
\begin{equation}
\frac{R^{p}}{T}\leq1\ \ \text{and}\ \ u\geq\left(\frac{R^{p}}{T}\right)^{\frac{1}{p-1}}.\label{eq:essupest bound}
\end{equation}
Then there exists a constant $C(N,p,C_{f})$ such that
\[
\essup_{B_{\sigma R(x_{0})\times(t_{0}-\sigma^{p}T,t_{0})}}u\leq C\left(\frac{T}{R^{p}}(1-\sigma)^{-N-p}\aint_{B_{R}(x_{0})\times(t_{0}-T,t_{0})}u^{p-2+\delta}\d z\right)^{1/\delta}
\]
for every $1/2\leq\sigma<1$ and $1<\delta<2$.
\end{lem}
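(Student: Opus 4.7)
The proof is a Moser iteration. The first step is to couple the two Caccioppoli inequalities of Lemma~\ref{lem:caccioppoli0} via the parabolic Sobolev embedding
\[
\int_{Q}v^{p\kappa}\d z\leq C(N,p)\left(\essup_{t}\int v^{p}\d x\right)^{p/N}\int_{Q}|Dv|^{p}\d z,\qquad\kappa:=1+\tfrac{p}{N},
\]
applied to $v=u^{(p-1+\lambda)/p}\zeta$, where $\zeta$ is a parabolic cut-off between a pair of nested cylinders $Q_{n+1}\Subset Q_{n}$. The combination yields a reverse-H\"older-type inequality that upgrades the integrability of $u$ from $L^{p-1+\lambda}(Q_{n})$ to $L^{(p-1+\lambda)\kappa}(Q_{n+1})$, with constants depending polynomially on $\lambda$ and on the cut-off data.

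The hypothesis (\ref{eq:essupest bound}) enters precisely to absorb the non-homogeneous lower-order terms $u^{\lambda}\zeta^{p}$ and $u^{1+\lambda}|\partial_{t}\zeta|\zeta^{p-1}$ of Lemma~\ref{lem:caccioppoli0} into the leading term $u^{p-1+\lambda}|D\zeta|^{p}$. Indeed, since $u\geq(R^{p}/T)^{1/(p-1)}$ and $p\geq 2$, one has
\[
u^{\lambda}\leq\tfrac{T}{R^{p}}\,u^{p-1+\lambda}\qquad\text{and}\qquad u^{1+\lambda}\leq\left(\tfrac{T}{R^{p}}\right)^{(p-2)/(p-1)}u^{p-1+\lambda}.
\]
With parabolic cut-offs on $Q_{n}=B_{R_{n}}(x_{0})\times(t_{0}-T_{n},t_{0})$ satisfying $R_{n}-R_{n+1}\sim 2^{-n}(1-\sigma)R$ and $T_{n}-T_{n+1}\sim 2^{-n}(1-\sigma^{p})T$, so that $|D\zeta_{n}|^{p}R^{p}\sim|\partial_{t}\zeta_{n}|T$, and using $R,T<1$ together with $R^{p}/T\leq 1$, all three contributions are bounded by a constant multiple of $u^{p-1+\lambda}|D\zeta_{n}|^{p}$. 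The iteration thus becomes effectively homogeneous.

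Starting from $\lambda_{0}=1$ (the smallest admissible value) and iterating the reverse-H\"older inequality along $\mu_{n}=p\kappa^{n}$, the telescoping geometric series converges and produces
\[
\essup_{B_{\sigma R}(x_{0})\times(t_{0}-\sigma^{p}T,t_{0})}u\leq C\left(\tfrac{T}{R^{p}}\right)^{\theta_{1}}(1-\sigma)^{-\theta_{2}}\left(\aint_{Q_{0}}u^{p}\d z\right)^{1/p}
\]
for explicit powers $\theta_{1},\theta_{2}>0$. The starting exponent $p$ is then lowered to $p-2+\delta$ by running the previous bound on the whole family $\{Q_{\sigma'}\}_{1/2\leq\sigma'<1}$, applying the interpolation
\[
\aint_{Q_{\sigma'}}u^{p}\d z\leq\bigl(\essup_{Q_{\sigma'}}u\bigr)^{2-\delta}\aint_{Q_{\sigma'}}u^{p-2+\delta}\d z,
\]
and absorbing the $\essup u$ factor via Young's inequality and the standard Giaquinta-type iteration lemma on $\sigma'$. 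Re-expressing the residual $(\essup u)^{2-\delta}$ through a further power of $T/R^{p}$ — again via the lower bound (\ref{eq:essupest bound}) — is what converts the naive exponent $1/(p-2+\delta)$ into the claimed $1/\delta$.

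The main technical obstacle is the careful bookkeeping through the iteration: ensuring that the polynomial $\lambda$-dependence of the Caccioppoli right-hand sides does not prevent the telescoping product from converging, and verifying that the powers of $T/R^{p}$ and $(1-\sigma)$ generated by the cut-off geometry, the absorbing of the non-homogeneous terms, and the final interpolation-absorbing step combine exactly to the factor $\bigl(\tfrac{T}{R^{p}}(1-\sigma)^{-N-p}\bigr)^{1/\delta}$ displayed on the right-hand side.
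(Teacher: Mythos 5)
Your overall strategy is the paper's: couple the two Caccioppoli estimates of Lemma \ref{lem:caccioppoli0} with the parabolic Sobolev inequality, use the lower bound in (\ref{eq:essupest bound}) to absorb the terms $u^{\lambda}$ and $u^{1+\lambda}\left|\partial_{t}\zeta\right|$ into $u^{p-1+\lambda}\left|D\zeta\right|^{p}$ (your two absorption inequalities are exactly the paper's), iterate, and finish with interpolation, Young's inequality and the Giaquinta--Giusti iteration lemma. The gap is in the exponent bookkeeping, which you defer as ``technical'' but which is precisely where the claimed power $1/\delta$ comes from. The Caccioppoli--Sobolev coupling does not produce your ladder $\mu_{n}=p\kappa^{n}$: the time-slice supremum in Lemma \ref{lem:caccioppoli0} controls only $\int u^{1+\lambda}$, not $\int u^{p-1+\lambda}$, so one step raises $\mu=p-1+\lambda$ to $\mu\kappa-\tfrac{p(p-2)}{N}$, and starting from $\mu_{0}=p$ the correct ladder is $\mu_{n}=p-2+2\kappa^{n}$ (equivalently $\lambda_{j}=2\kappa^{j}-1$, as in the paper). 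Hence the limiting power of the iteration is $\lim_{n}\kappa^{n}/\mu_{n}=1/2$, giving $\essup u\leq C\big((S-s)^{-N-p}\int u^{p}\d z\big)^{1/2}$, not the power $1/p$ you state (the two coincide only for $p=2$).

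This matters for the endgame. With the $1/2$ power, interpolating $\int u^{p}\leq(\essup u)^{2-\delta}\int u^{p-2+\delta}$ and applying Young with exponents $2/(2-\delta)$ and $2/\delta$ yields $\tfrac{1}{2}\essup u+C\big((S-s)^{-N-p}\int u^{p-2+\delta}\big)^{1/\delta}$ directly, and the iteration lemma finishes the proof: no conversion step is needed. With your $1/p$ power the same interpolation only gives the exponent $1/(p-2+\delta)$, and the conversion you propose does not work: the hypothesis $u\geq(R^{p}/T)^{1/(p-1)}$ bounds $\aint u^{p-2+\delta}$ from \emph{below}, which is the wrong direction. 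To pass from $X^{1/(p-2+\delta)}$ to $X^{1/\delta}$ with $X:=\tfrac{T}{R^{p}}(1-\sigma)^{-N-p}\aint u^{p-2+\delta}$ you would need $X\geq1$, but the lower bound only yields $X\geq(R^{p}/T)^{(\delta-1)/(p-1)}(1-\sigma)^{-N-p}$, which tends to $0$ as $R^{p}/T\rightarrow0$ (e.g.\ for $u$ comparable to the constant $(R^{p}/T)^{1/(p-1)}$), and for $X<1$ one has $X^{1/(p-2+\delta)}\geq X^{1/\delta}$. So as written your argument proves at best a weaker inequality than the stated one. The fix is simply to track the ladder correctly ($\mu_{n}=p-2+2\kappa^{n}$, limit power $1/2$), after which your plan coincides with the paper's proof.
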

\begin{proof}
Let $\sigma R\leq s<S<R$. For $j\in0,1,2,\ldots$, we set
\[
R_{j}:=S-\left(S-s\right)(1-2^{-j})
\]
and 
\[
U_{j}:=B_{j}\times\Gamma_{j}:=B_{R_{j}}(x_{0})\times(t_{0}-(R_{j}/S)^{p}T,t_{0}).
\]
We choose test functions $\varphi_{j}\in C^{\infty}(\overline{U_{j}})$
such that $\supp\varphi_{j}(\cdot,t)\Subset B_{R_{j}}(x_{0})$,
\[
0\leq\varphi_{j}\leq1,\ \ \varphi_{j}\equiv0\text{ on }\partial_{p}U_{j},\ \ \varphi_{j}\equiv1\text{ in }U_{j+1}
\]
and 
\[
\left|D\varphi_{j}\right|\leq\frac{C}{S-s}2^{j},\ \ \left|\partial_{t}\varphi_{j}\right|\leq\frac{R^{p}}{T}\frac{C}{(S-s)^{p}}2^{jp}.
\]
We set $\gamma:=1+p/N$ and
\[
\lambda_{j}:=2\gamma^{j}-1,j=0,1,2,\ldots.
\]
Assuming that we already know that $u\in L^{p-1+\lambda_{j}}(U_{j})$,
then we have by a parabolic Sobolev's inequality (see \cite[p7]{dibenedetto93})
\begin{align}
\int_{U_{j+1}} & u^{\kappa\alpha}\d z\leq\int_{U_{j}}\left(u^{\alpha/p}\varphi_{j}^{\beta/p}\right)^{\kappa p}\d z\nonumber \\
\leq & C(N,p)\int_{U_{j}}\left|D(u^{\alpha/p}\varphi_{j}^{\beta/p})\right|^{p}\d z\left(\essup_{\Gamma_{j}}\int_{B_{j}}\left(u^{\alpha/p}\varphi_{j}^{\beta/p}\right)^{\left(\kappa-1\right)N}\d x\right)^{p/N},\label{eq:essupest_sobolev}
\end{align}
where
\[
\alpha=p-1+\lambda_{j},\ \text{ }\kappa=1+\frac{p(1+\lambda_{j})}{N(p-1+\lambda_{j})},\ \text{ \ensuremath{\beta}=}\frac{p(p-1+\lambda_{j})}{1+\lambda_{j}}.
\]
The first estimate in Lemma \ref{lem:caccioppoli0} gives
\begin{align}
\essup_{\Gamma_{j}} & \int_{B_{j}}\left(u^{\alpha/p}\varphi_{j}^{\beta/p}\right)^{\left(\kappa-1\right)N}\d x=\essup_{\Gamma_{j}}\int_{B_{j}}u^{1+\lambda_{j}}\varphi_{j}^{p}\d x\nonumber \\
\leq & C\lambda_{j}\int_{U_{j}}u^{p-1+\lambda_{j}}\left|D\varphi_{j}\right|^{p}+u^{1+\lambda_{j}}\left|\partial_{t}\varphi_{j}\right|\varphi_{j}^{p-1}+\left(u^{\lambda_{j}}+u^{p-1+\lambda_{j}}\right)\varphi_{j}^{p}\d z.\label{eq:essupest_ca1}
\end{align}
Using the second estimate with $\zeta=\varphi_{j}^{\beta/p}$ we obtain
\begin{align}
\int_{U_{j}} & \left|D(u^{\alpha/p}\varphi_{j}^{\beta/p})\right|^{p}\d z\nonumber \\
\leq & C\lambda_{j}^{p}\int_{U_{j}}u^{p-1+\lambda_{j}}\left|D\varphi_{j}\right|^{p}+u^{1+\lambda_{j}}\left|\partial_{t}\varphi_{j}\right|\varphi_{j}^{p-1}+\left(u^{\lambda_{j}}+u^{p-1+\lambda_{j}}\right)\varphi_{j}^{p}\d z.\label{eq:essupest_ca2}
\end{align}
Combining (\ref{eq:essupest_sobolev}) with (\ref{eq:essupest_ca1})
and (\ref{eq:essupest_ca2}) we arrive at
\begin{equation}
\left(\int_{U_{j+1}}u^{\kappa\alpha}\d z\right)^{\frac{1}{\gamma}}\leq C\lambda_{j}^{p}\int_{U_{j}}\frac{2^{jp}}{\left(S-s\right)^{p}}u^{p-1+\lambda_{j}}+\frac{R^{p}2^{jp}}{T(S-s)^{p}}u^{1+\lambda_{j}}+u^{\lambda_{j}}\d z,\label{eq:preit est}
\end{equation}
where $\smash{\gamma=1+p/N}$. We wish to iterate this inequality,
but having multiple terms at the right-hand side is a problem. This
is where the assumption (\ref{eq:essupest bound}) comes into play.
Since $\smash{u\geq(R^{p}/T)^{1/(p-1)}}$, we have 
\[
u^{\lambda_{j}}=\left(\frac{1}{u}\right)^{p-1}u^{p-1+\lambda_{j}}\leq\left(\frac{T}{R^{p}}\right)^{\frac{p-1}{p-1}}u^{p-1+\lambda_{j}}\leq\frac{1}{(S-s)^{p}}u^{p-1+\lambda_{j}}
\]
and since $T/R^{p}\geq1$, we have also
\[
u^{1+\lambda_{j}}=\left(\frac{1}{u}\right)^{p-2}u^{p-1+\lambda_{j}}\leq\left(\frac{T}{R^{p}}\right)^{\frac{p-2}{p-1}}u^{p-1+\lambda_{j}}\leq\frac{T}{R^{p}}u^{p-1+\lambda_{j}}.
\]
Using these estimates it follows from (\ref{eq:preit est}) that
\begin{align}
\left(\int_{U_{j+1}}u^{\kappa\alpha}\d z\right)^{\frac{1}{\gamma}}\leq & \frac{C\lambda_{j}^{p}2^{jp}}{(S-s)^{p}}\int_{U_{j}}u^{p-1+\lambda_{j}}\d z.\label{eq:lalala}
\end{align}
Observe that
\[
\kappa\alpha=p-1+\lambda_{j}(1+p/N)+p/N=p-1+\lambda_{j+1}.
\]
Hence by denoting $Y:=C(S-s)^{-p}$, the inequality (\ref{eq:lalala})
becomes 
\[
\left(\int_{U_{j+1}}u^{p-1+\lambda_{j+1}}\d z\right)^{\frac{1}{\gamma}}\leq Y(2\gamma)^{jp}\int_{U_{j}}u^{p-1+\lambda_{j}}\d z.
\]
We iterate this inequality. When $j=0,$ it reads as
\[
\left(\int_{U_{1}}u^{p-1+\lambda_{1}}\d z\right)^{\frac{1}{\gamma}}\leq Y\int_{U_{0}}u^{p}\d z.
\]
Then, when $j=1$, we have
\begin{align*}
\left(\int_{U_{2}}u^{p-1+\lambda_{2}}\d z\right)^{\frac{1}{\gamma^{2}}}\leq & Y^{\frac{1}{\gamma}}(2\gamma)^{p\frac{1}{\gamma}}\left(\int_{U_{1}}u^{p-1+\lambda_{1}}\d z\right)^{\frac{1}{\gamma}}\leq Y^{1+\frac{1}{\gamma}}(2\gamma)^{p\frac{1}{\gamma}}\int_{U_{0}}u^{p}\d z.
\end{align*}
Continuing this way we arrive at
\begin{align*}
\left(\int_{U_{j+1}}u^{p-1+\lambda_{j+1}}\d z\right)^{\frac{1}{\gamma^{j+1}}}\leq & Y^{1+\frac{1}{\gamma}+\ldots+\frac{1}{\gamma^{j}}}(2\gamma)^{p(\frac{1}{\gamma}+\frac{2}{\gamma^{2}}+\ldots+\frac{j}{\gamma^{j}})}\int_{U_{0}}u^{p}\d z\\
\leq & CY^{\frac{N}{p}+1}\int_{U_{0}}u^{p}\d z,
\end{align*}
so that
\[
\left(\int_{U_{j+1}}u^{p-1+\lambda_{j+1}}\d z\right)^{\frac{1}{p-1+\lambda_{j+1}}}\leq\left(CY^{\frac{N}{p}+1}\int_{U_{0}}u^{p}\d z\right)^{\frac{\gamma^{j+1}}{p-1+\lambda_{j+1}}}.
\]
Since $\gamma^{j+1}/(p-1+\lambda_{j+1})\rightarrow1/2$ and $p-1+\lambda_{j+1}\rightarrow\infty$
as $j\rightarrow\infty$, we obtain that
\[
\essup_{Q(s)}u\leq C\left((S-s)^{-N-p}\int_{Q(S)}u^{p}\d z\right)^{1/2},
\]
where $Q(s)=B(x_{0},s)\times(t_{0}-(s/S)^{p}T,t_{0})$. By Young's
inequality we have for every $1<\delta<2$ that
\begin{align}
\essup_{Q(s)}u\leq & \left(\essup_{Q(S)}u^{2-\delta}(S-s)^{-N-p}\int_{Q(S)}u^{p-2+\delta}\d z\right)^{1/2}\nonumber \\
\leq & \frac{1}{2}\essup_{Q(S)}u+\left((S-s)^{-N-p}\int_{B_{R}(x_{0})\times(t_{0}-T,t_{0})}u^{p-2+\delta}\d z\right)^{1/\delta}.\label{eq:essupest aux}
\end{align}
A standard iteration argument such as \cite[Lemma 1.1]{giaquintaGiusti82}
now finishes the proof. Indeed, if $f:[T_{0},T_{1}]\rightarrow\mathbb{R}$
is a non-negative bounded function such that all $T_{0}\leq t\leq\tau\leq T_{1}$
satisfy
\begin{equation}
f(t)\leq\theta f(\tau)+(\tau-t)^{-\eta}A,\label{eq:essupest it assumption}
\end{equation}
where $A,\theta,\eta\geq0$ with $\theta<1$, then
\[
f(T_{0})\leq C(\eta,\theta)(T_{1}-T_{0})^{-\eta}A.
\]
Selecting $T_{0}:=\sigma R$, $T_{1}:=\left(\sigma R+R\right)/2$
and the other variables so that (\ref{eq:essupest aux}) implies (\ref{eq:essupest it assumption}),
we get the desired estimate.
\end{proof}
Next we consider the case where the non-negative subsolution is not
necessarily bounded away from zero. Observe that the estimate differs
from the usual estimate for the $p$-Laplacian because of the power
$1/(p-1)$ in the first term (cf. \cite[Theorem 4.1]{dibenedetto93}
or \cite[Theorem 3.4]{kuusi09}). However, we have the additional
assumption (\ref{eq:essupest cylinder cnd}).
\begin{thm}
\label{thm:essupest} Assume that $p\geq2$ and that (\ref{eq:gcnd2})
holds. Suppose that $u$ is a non-negative weak subsolution to (\ref{eq:p-para f})
in $\Xi$ and $B_{R}(x_{0})\times(t_{0}-T,t_{0})\Subset\Xi$ with
$R,T<1$ such that
\begin{equation}
\frac{R^{p}}{T}\leq1.\label{eq:essupest cylinder cnd}
\end{equation}
Then there exists a constant $C=C(N,p,C_{f},\delta)$ such that we
have the estimate
\[
\essup_{B(x_{0},R/2)\times(t_{0}-T/2^{p},t_{0})}u\leq C\left(\frac{R^{p}}{T}\right)^{\frac{1}{p-1}\cdot\frac{\delta-1}{\delta}}+C\left(\frac{T}{R^{p}}\aint_{t_{0}-T}^{t_{0}}\aint_{B_{R}(x_{0})}u^{p-2+\delta}\d x\d t\right)^{\frac{1}{\delta}}
\]
for all $1<\delta<2$.
\end{thm}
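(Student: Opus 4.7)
The plan is to reduce this statement to the previous Lemma \ref{lem:essupest} by a simple translation trick. Set $k := (R^p/T)^{1/(p-1)}$ and define $v := u + k$. Because the equation (\ref{eq:p-para f}) is invariant under additive constants (shifting $u$ by a constant leaves $\partial_t u$, $Du$ and $\Delta_p u$ unchanged, and the term $\int k\,\partial_t \varphi\,dz$ vanishes whenever $\varphi$ has compact support), $v$ is a weak subsolution to (\ref{eq:p-para f}) in $\Xi$ exactly as $u$ is. Moreover $v \geq k = (R^p/T)^{1/(p-1)}$ by construction, and the hypothesis $R^p/T \leq 1$ is inherited unchanged, so $v$ satisfies the assumption (\ref{eq:essupest bound}) of Lemma \ref{lem:essupest}.

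Next I would apply Lemma \ref{lem:essupest} to $v$ with $\sigma = 1/2$, obtaining
\[
\essup_{B_{R/2}(x_0) \times (t_0 - T/2^p, t_0)} v \leq C(N,p,C_f) \left( \frac{T}{R^p} \aint_{B_R(x_0) \times (t_0-T, t_0)} v^{p-2+\delta} \, dz \right)^{1/\delta}.
\]
Since $u \leq v$ pointwise, the left-hand side bounds the essential supremum of $u$. On the right, I would use the elementary inequality $(a+b)^{p-2+\delta} \leq C(p,\delta)(a^{p-2+\delta} + b^{p-2+\delta})$ to split
\[
\aint v^{p-2+\delta} \, dz \leq C(p,\delta) \left( k^{p-2+\delta} + \aint u^{p-2+\delta}\, dz \right).
\]

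Finally I would compute the contribution of the $k^{p-2+\delta}$ term. Substituting $k = (R^p/T)^{1/(p-1)}$ gives
\[
\left( \frac{T}{R^p} \cdot k^{p-2+\delta} \right)^{1/\delta} = \left( \frac{R^p}{T} \right)^{\frac{1}{\delta}\left(\frac{p-2+\delta}{p-1} - 1\right)} = \left( \frac{R^p}{T} \right)^{\frac{1}{p-1} \cdot \frac{\delta-1}{\delta}},
\]
which is precisely the first term in the claimed estimate, while the remaining piece yields the second term. The constant $(1-\sigma)^{-N-p}$ from Lemma \ref{lem:essupest} is an absolute constant under $\sigma = 1/2$ and gets absorbed into $C(N,p,C_f,\delta)$.

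The only real subtlety is the first step—checking that the additive shift preserves being a subsolution—but this is immediate from the weak formulation since constants have zero distributional derivative. The exponent bookkeeping in the third step is routine, and no Moser iteration has to be redone because it is fully encapsulated in the previous lemma.
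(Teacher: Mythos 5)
Your proposal is correct and follows essentially the same route as the paper: the paper also sets $\theta := (R^p/T)^{1/(p-1)}$, applies Lemma~\ref{lem:essupest} to the shifted subsolution $v := \theta + u$, splits $(\theta+u)^{p-2+\delta}$ into two terms, and performs the identical exponent computation $\frac{T}{R^p}\theta^{p-2+\delta} = (R^p/T)^{(\delta-1)/(p-1)}$ before raising to the $1/\delta$ power. Your observation that additive constants preserve the weak subsolution property is the same (unstated) justification the paper relies on.
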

\begin{proof}
We denote
\[
\varLambda:=(1-\sigma)^{-N-p},\ \theta:=\left(\frac{R^{p}}{T}\right)^{\frac{1}{p-1}}.
\]
Using Lemma \ref{lem:essupest} on the subsolution $v:=\theta+u$
we get the estimate
\begin{align*}
\essup_{B_{\sigma R(x_{0})\times(t_{0}-\sigma^{p}T,t_{0})}}u\leq & C\left(\varLambda\frac{T}{R^{p}}\aint_{B_{R}(x_{0})\times(t_{0}-T,t_{0})}\left(\theta+u\right)^{p-2+\delta}\d z\right)^{\frac{1}{\delta}}\\
\leq & C\varLambda^{\frac{1}{\delta}}\left(\frac{T}{R^{p}}\theta^{p-2+\delta}\right)^{\frac{1}{\delta}}+C\varLambda^{\frac{1}{\delta}}\left(\frac{T}{R^{p}}\aint_{B_{R}(x_{0})\times(t_{0}-T,t_{0})}u^{p-2+\delta}\d z\right)^{\frac{1}{\delta}},
\end{align*}
where
\[
\frac{T}{R^{p}}\theta^{p-2+\delta}=T^{1-\frac{p-2+\delta}{p-1}}R^{-p+\frac{p(p-2+\delta)}{p-1}}=\left(T^{1-\delta}R^{p\left(\delta-1\right)}\right)^{\frac{1}{p-1}}=\left(\frac{R^{p}}{T}\right)^{\frac{\delta-1}{p-1}}.
\]
Taking $\sigma=1/2$ now yields the desired inequality.
\end{proof}
\begin{lem}
\label{lem:positive part is sub}Assume that $p\geq2$ and that $f(0)=0$.
Let $u$ be a weak subsolution to (\ref{eq:p-para f}) in $\Omega_{t_{1},t_{2}}$.
Then $u_{+}=\max(u,0)$ is also a weak subsolution.
\end{lem}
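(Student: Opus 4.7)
The plan is to derive the subsolution inequality for $u_{+}$ by testing a time-mollified form of the equation with a smooth approximation of $\chi_{\{u>0\}}$ applied to $u$. Fix a non-negative $\varphi \in C_{0}^{\infty}(\Omega_{t_{1},t_{2}})$ and, for $\epsilon>0$, pick a smooth non-decreasing $H_{\epsilon}\colon\mathbb{R}\to[0,1]$ with $H_{\epsilon}(s)=0$ for $s\leq 0$ and $H_{\epsilon}(s)=1$ for $s\geq\epsilon$. Define $G_{\epsilon}(s):=\int_{0}^{s}r H_{\epsilon}'(r)\,dr$, so that $0\leq G_{\epsilon}\leq\epsilon$ and $G_{\epsilon}'(s)=sH_{\epsilon}'(s)$.

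First I would apply Lemma \ref{lem:time conv lemma} to $u$ with the admissible test function $\phi:=\varphi H_{\epsilon}(u^{\delta})$, where $u^{\delta}$ denotes the time-mollification. After expanding $\partial_{t}\phi$ and $D\phi$, the inconvenient term $u^{\delta}\varphi H_{\epsilon}'(u^{\delta})\partial_{t}u^{\delta}$ equals $\varphi\partial_{t}G_{\epsilon}(u^{\delta})$; one integration by parts in time against $\varphi$ turns it into a clean expression $-G_{\epsilon}(u^{\delta})\partial_{t}\varphi$. Passing $\delta\to 0$ using the $L^{p}_{\mathrm{loc}}$-convergence $u^{\delta}\to u$, $(Du)^{\delta}\to Du$, the $L^{p/(p-1)}_{\mathrm{loc}}$-convergence of $(|Du|^{p-2}Du)^{\delta}$ and $(f(Du))^{\delta}$ (using (\ref{eq:gcnd2})), and the boundedness and a.e.\ convergence of $H_{\epsilon}(u^{\delta})$ and $H_{\epsilon}'(u^{\delta})$, yields
\[
\int_{\Omega_{t_{1},t_{2}}}(G_{\epsilon}(u)-uH_{\epsilon}(u))\partial_{t}\varphi + H_{\epsilon}(u)|Du|^{p-2}Du\cdot D\varphi + \varphi H_{\epsilon}'(u)|Du|^{p} - \varphi H_{\epsilon}(u)f(Du)\,dz\leq 0.
\]
The third term is pointwise non-negative (since $H_{\epsilon}'\geq 0$), so discarding it preserves the direction of the inequality.

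Finally I would let $\epsilon\to 0$ and invoke dominated convergence; the uniform bounds $|G_{\epsilon}|\leq\epsilon$ and $|H_{\epsilon}|\leq 1$, together with local $L^{p}$-integrability of $Du$ and the growth condition (\ref{eq:gcnd2}), supply the dominating functions. Pointwise, $H_{\epsilon}(u)\to\chi_{\{u>0\}}$ and $G_{\epsilon}(u)\to 0$. By Stampacchia's theorem, $Du=0$ a.e.\ on $\{u=0\}$, so $|Du|^{p-2}Du\,\chi_{\{u>0\}}=|Du_{+}|^{p-2}Du_{+}$ a.e.; and using $f(0)=0$, the product $f(Du)\chi_{\{u>0\}}$ coincides a.e.\ with $f(Du_{+})$. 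The limit is exactly the weak subsolution inequality for $u_{+}$.

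The main obstacle is the handling of the time derivative in the middle step: since a weak subsolution has no classical $\partial_{t}u$, one must pass through Lemma \ref{lem:time conv lemma} and carefully verify that $\varphi H_{\epsilon}(u^{\delta})$ lies in $W^{1,p}(\Omega_{T})\cap L^{\infty}(\Omega_{T})$ with compact support. The function $G_{\epsilon}$ is exactly tailored so that the chain-rule artefact in the time derivative can be integrated by parts cleanly, and the quadratic-in-$Du$ remainder this produces is precisely the non-negative term that one can then drop.
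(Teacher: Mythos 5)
Your proposal is correct and follows essentially the same route as the paper: the paper tests the time-mollified inequality with $\min\{k(u^{\epsilon})_{+},1\}\zeta$ (a piecewise-linear cutoff, $k\to\infty$) where you use a smooth Heaviside approximation $H_{\epsilon}$ ($\epsilon\to 0$), and your auxiliary primitive $G_{\epsilon}$ is exactly the paper's term $\tfrac{1}{2k}\min\{ku_{+},1\}^{2}$, with the signed gradient term discarded and the conclusion drawn from $f(0)=0$ and $Du_{+}=\chi_{\{u>0\}}Du$ in both arguments.
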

\begin{proof}
Fix a non-negative test function $\zeta\in C_{0}^{\infty}(\Omega_{t_{1},t_{2}})$.
We test the regularized equation in Lemma \ref{lem:time conv lemma}
with $\min\left\{ k(u^{\epsilon})_{+},1\right\} \zeta$. Then by similar
arguments as in the proof of Lemma \ref{lem:caccioppoli0} we get
the estimate 
\begin{align*}
\int_{\Omega_{t_{1},t_{2}}} & \min\left\{ ku_{+},1\right\} (-u\partial_{t}\zeta+\left|Du\right|^{p-2}Du\cdot D\zeta-\zeta f(Du))\d z\\
\leq & -\frac{1}{2k}\int_{\Omega_{t_{1},t_{2}}}\left(\min\left\{ ku_{+},1\right\} \right)^{2}\partial_{t}\zeta\d z-k\int_{\left\{ 0<ku<1\right\} }\zeta\left|Du\right|^{p}\d z.
\end{align*}
Letting $k\rightarrow\infty$ this implies
\[
\int_{\left\{ u>0\right\} }-u\partial_{t}\zeta+\left|Du\right|^{p-2}Du\cdot D\zeta-\zeta f(Du)\d z\leq0.
\]
Since $f(0)=0$ and $u_{+}\partial_{t}\zeta=0=Du_{+}$ a.e.$\ $in
$\left\{ u\leq0\right\} $, we get that
\[
\int_{\Omega_{t_{1},t_{2}}}-u_{+}\partial_{t}\zeta+\left|Du_{+}\right|^{p-2}Du_{+}\cdot D\zeta-\zeta f(Du_{+})\d z\leq0.\qedhere
\]
\end{proof}
\begin{thm}
\label{thm:semicont} Assume that $p\geq2$, (\ref{eq:gcnd2}) holds
and that $f(0)=0$. Suppose that $u$ is a weak supersolution to (\ref{eq:p-para f})
in $\Xi$. Let $u_{\ast}$ denote the lower semicontinuous regularization
of $u$, that is,
\begin{align*}
u_{\ast}(x,t):= & \essliminf_{(y,s)\rightarrow(x,t)}u(y,s):=\lim_{R\rightarrow0}\ \essinf_{B_{R}(x)\times(t-R^{p},t+R^{p})}u.
\end{align*}
Then $u=u_{\ast}$ almost everywhere.
\end{thm}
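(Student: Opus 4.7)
The plan is to verify the two inequalities $u_{\ast}\leq u$ and $u\leq u_{\ast}$ at every Lebesgue point of $u$ with respect to parabolic cylinders, which form a set of full measure. The first inequality is immediate: if $(x_0,t_0)$ is a Lebesgue point and $Q_R:=B_R(x_0)\times(t_0-R^p,t_0+R^p)$, then $\essinf_{Q_R}u\leq\aint_{Q_R}u\d z$, and passing to the limit gives $u_{\ast}(x_0,t_0)\leq u(x_0,t_0)$.

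For the reverse inequality at a Lebesgue point $(x_0,t_0)$, fix $k<u(x_0,t_0)$; I aim to prove $u_{\ast}(x_0,t_0)\geq k$. A direct substitution in the weak formulation, using the homogeneity $\Delta_p(-u)=-\Delta_p u$, shows that $-u$ is a weak subsolution to the modified equation $\partial_t w-\Delta_p w=g(Dw)$ with $g(\xi):=-f(-\xi)$. Since $g(0)=0$ and $g$ satisfies (\ref{eq:gcnd2}) with the same constant $C_f$, Lemma \ref{lem:positive part is sub} applied to this modified equation implies that $v:=(k-u)_+$ is a non-negative weak subsolution of it. Theorem \ref{thm:essupest} then applies to $v$, because its constants depend only on $N,p,C_f,\delta$.

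The key step is to apply Theorem \ref{thm:essupest} to $v$ on a cylinder whose top is at $(x_0,t_0+R^p)$, with spatial radius $R$ and time-length $T:=R^p/\eta$, where $\eta\in(0,1/2^{p+1}]$ is a free parameter. The requirement $R^p/T=\eta\leq1$ is met, and the estimate yields
\begin{equation*}
\essup_{B_{R/2}(x_0)\times(t_0+R^p-T/2^p,t_0+R^p)}v\leq C\eta^{\frac{\delta-1}{(p-1)\delta}}+C\bigg(\frac{1}{\eta}\aint_{B_R(x_0)\times(t_0+R^p-T,t_0+R^p)}v^{p-2+\delta}\d z\bigg)^{\frac{1}{\delta}}.
\end{equation*}
The choice $\eta\leq1/2^{p+1}$ guarantees $T/2^p\geq 2R^p$, so the controlled cylinder contains $B_{R/2}(x_0)\times(t_0-(R/2)^p,t_0+(R/2)^p)$. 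Because $v(x_0,t_0)=0$ and the cylinder on the right-hand side shrinks to $(x_0,t_0)$ within a parabolic-type Vitali family, Lebesgue's differentiation theorem makes the integral average vanish as $R\to 0$ for fixed $\eta$. Writing $r=R/2$ and sending first $R\to 0$ and then $\eta\to 0$ yields $u_{\ast}(x_0,t_0)\geq k$; since $k<u(x_0,t_0)$ was arbitrary, the claim follows.

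The main obstacle is the first term $C(R^p/T)^{(\delta-1)/((p-1)\delta)}$ in Theorem \ref{thm:essupest}, which does not vanish under the intrinsic parabolic scaling $T=R^p$ that matches the definition of $u_{\ast}$. The remedy is to over-scale in time by taking $T\gg R^p$; one must then verify both that the resulting elongated cylinders still form a family at $(x_0,t_0)$ on which Lebesgue differentiation is available, and that the controlled cylinder still comfortably covers a symmetric parabolic cylinder around $(x_0,t_0)$.
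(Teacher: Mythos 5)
Your proposal is correct and follows essentially the same route as the paper: both reduce the claim to the essential supremum estimate of Theorem \ref{thm:essupest} applied to $(k-u)_+$ (the paper uses $k=u(x_0,t_0)$ and argues by contradiction), both rely on Lemma \ref{lem:positive part is sub} for the sign-flipped equation with $g(\xi)=-f(-\xi)$, and both kill the non-scaling first term by over-inflating the cylinders in time (your parameter $\eta$ plays the role of the paper's $1/M$) while the averaged term vanishes by Lebesgue differentiation along the parabolic basis. Your explicit shift of the cylinder's top to $t_0+R^p$ so that the controlled region covers a symmetric cylinder around $(x_0,t_0)$ is a detail the paper leaves implicit; just state that the Lebesgue points are taken with exponent $p-2+\delta$ (as the paper does with $p-\tfrac12$), which is harmless since $u\in L^{p}_{loc}$.
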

\begin{proof}
For all $M\in\mathbb{N}$, we define the cylinders
\[
Q_{R}^{M}(x,t):=B_{R}(x)\times(t-MR^{p},t+MR^{p}).
\]
We denote by $E_{M}$ the set of Lebesgue points with respect to the
basis $\{Q_{R}^{M}\}$, that is, 
\[
E_{M}:=\left\{ (x,t)\in\Xi:\lim_{R\rightarrow0}\aint_{Q_{R}^{M}(x,t)}\left|u(x,t)-u(y,s)\right|^{p-\frac{1}{2}}\d y\d s=0\right\} .
\]
Then $E_{M}\subset E_{M+1}$ so that 
\[
E:=\bigcap_{M\in\mathbb{N}}E_{M}=E_{1}.
\]
Moreover, we have $\left|E\right|=\left|\Xi\right|$, which follows
from \cite[p13]{Stein93} by a simple argument, see for example \cite[p54]{measuretheoryevans}.

We now claim that if $(x_{0},t_{0})\in E$, then
\begin{equation}
u(x_{0},t_{0})\leq\essliminf_{(x,t)\rightarrow(x_{0},t_{0})}u(x,t).\label{eq:semicont claim}
\end{equation}
We make the counter assumption
\[
u(x_{0},t_{0})-\essliminf_{(x,t)\rightarrow(x_{0},t_{0})}u(x,t)=\varepsilon>0.
\]
Let $R_{0}$ be a radius such that
\[
\left|\essliminf_{(x,t)\rightarrow(x_{0},t_{0})}u(x,t)-\essinf_{Q_{R}^{1}(x_{0},t_{0})}u\right|\leq\varepsilon/2
\]
for all $0<R\leq R_{0}$. For such $R$ we have
\begin{equation}
u(x_{0},t_{0})-\essinf_{Q_{R}^{1}(x_{0},t_{0})}u\geq\varepsilon/2.\label{eq:semicont 1}
\end{equation}
We set $v:=(u(x_{0},t_{0})-u)_{+}$. Since $(x_{0},t_{0})\in E$,
we find for any $M\in\mathbb{N}$ a radius $R_{1}=R_{1}(M)$ such
that
\begin{align}
\aint_{Q_{R_{1}}^{M}(x_{0},t_{0})} & v^{p-\frac{1}{2}}\d x\d t\leq\aint_{Q_{R_{1}}^{M}(x_{0},t_{0})}\left|u(x_{0},t_{0})-u\right|^{p-\frac{1}{2}}\d x\d t\leq\left(\frac{1}{M}\right)^{2}.\label{eq:semicont 2}
\end{align}
On the other hand, by Lemma \ref{lem:positive part is sub} the function
$v$ is a weak subsolution to
\[
\partial_{t}v+\Delta_{p}v-g(Dv)\leq0,
\]
where $g(\xi)=-f(-\xi)$. Observe also that the cylinder $Q_{R_{1}}^{M}(x_{0},t_{0})$
satisfies the condition (\ref{eq:essupest cylinder cnd}) since $R_{1}^{p}/(MR_{1}^{p})\leq1$.
Hence we may apply Theorem \ref{thm:essupest} with $\delta=3/2$
and then use (\ref{eq:semicont 2}) to get
\begin{align*}
\essup_{Q_{(R_{1})/2}^{M}(x_{0},t_{0})}v\leq & C\left(\frac{R_{1}^{p}}{R_{1}^{p}M}\right)^{\frac{1}{3\left(p-1\right)}}+C\left(\frac{R_{1}^{p}M}{R_{1}^{p}}\aint_{Q_{R_{1}}^{M}(x_{0},t_{0})}v^{p-\frac{1}{2}}\d x\d t\right)^{\frac{2}{3}}\\
\leq & \frac{C}{M^{3(p-1)}}+C\left(M\cdot\frac{1}{M^{2}}\right)^{\frac{2}{3}}\\
\leq & C\left(\frac{1}{M}\right)^{\frac{1}{3}}.
\end{align*}
Now we first fix $M$ so large that $C/M^{\frac{1}{3}}\leq\varepsilon/4$
and this will also fix $R_{1}$. Then we take $R\in(0,R_{0}]$ so
small that $Q_{R}^{1}(x_{0},t_{0})\subset Q_{(R_{1})/2}^{M}(x_{0},t_{0})$.
Then (\ref{eq:semicont 1}) leads to a contradiction since
\[
\varepsilon/4\geq\essup_{Q_{(R_{1})/2}^{M}(x_{0},t_{0})}v\geq\essup_{Q_{R}^{1}(x_{0},t_{0})}v\geq u(x_{0},t_{0})-\essinf_{Q_{R}^{1}(x_{0},t_{0})}u\geq\varepsilon/2.
\]
Hence (\ref{eq:semicont claim}) holds and we have
\[
u(x_{0},t_{0})\leq\essliminf_{(x,t)\rightarrow(x_{0},t_{0})}u(x,t)\leq\lim_{R\rightarrow0}\aint_{Q_{R}^{1}}u(x,t)\d x\d t=u(x_{0},t_{0}).
\]
Thus $u_{\ast}=u$ almost everywhere and it is easy to show that $u_{\ast}$
is lower semicontinuous.
\end{proof}
\bibliographystyle{alpha}

\Addresses
\end{document}